\numberwithin{equation}{section}
\newtheorem{theorem}{Theorem}[section]
\newtheorem{lemma}[theorem]{Lemma}
\newtheorem{proposition}[theorem]{Proposition}
\newtheorem{corollary}[theorem]{Corollary}
\newtheorem{remark}[theorem]{Remark}
\newtheorem{example}[theorem]{Example}
\newcommand{\cP}{{\ensuremath{\mathcal P}} }
\renewcommand{\tilde}{\widetilde}          
\DeclareMathSymbol{\leqslant}{\mathalpha}{AMSa}{"36} 
\DeclareMathSymbol{\geqslant}{\mathalpha}{AMSa}{"3E} 
\DeclareMathSymbol{\eset}{\mathalpha}{AMSb}{"3F}     
\newcommand{\R}{\mathbb{R}}
\newcommand{\N}{\mathbb{N}}
\renewcommand{\P}{\ensuremath{\mathbb P}}
\newcommand{\E}{\ensuremath{\mathbb E}}
\renewcommand{\epsilon}{\varepsilon} 
\renewcommand{\rho}{\varrho} 
\newenvironment{myenumerate}{%
  \renewcommand{\theenumi}{\arabic{enumi}}%
\renewcommand{\labelenumi}{{\rm(\theenumi)}}%
\begin{list}{\labelenumi}
	{%
	\setlength{\itemsep}{0.4em}%
	\setlength{\topsep}{0.5em}%
	\setlength\leftmargin{2.45em}%
	\setlength\labelwidth{2.05em}%
	\setlength{\labelsep}{0.4em}%
	\usecounter{enumi}%
	}%
	}%
{\end{list}
}
\newenvironment{myitemize}{%
\begin{list}{$\bullet$}%
 	{%
	\setlength{\itemsep}{0.4em}%
	\setlength{\topsep}{0.5em}%
	\setlength\leftmargin{2.45em}%
	\setlength\labelwidth{2.05em}%
	\setlength{\labelsep}{0.4em}%
	}%
	}%
{\end{list}}
\newcommand{\tnu}{\tilde{\nu}}
\newcommand{\tmu}{\tilde{\mu}}
\newcommand{\ot}{\overline{t}}
\newcommand{\ut}{\underline{t}}
\newcommand{\oq}{\overline{q}}
\newcommand{\uq}{\underline{q}}
\newcommand{\lecx}{\le_{\textup{cx}}}
\title[Sampling of probability measures in the convex order]{Sampling of probability measures in the convex order by Wasserstein projection}
\author{Aur\'elien Alfonsi, Jacopo Corbetta and Benjamin Jourdain}
\thanks{Universit\'e Paris-Est, Cermics (ENPC), INRIA, F-77455 Marne-la-Vall\'ee, France,\\
e-mails : aurelien.alfonsi@enpc.fr, j.corbetta@zeliade.com, benjamin.jourdain@enpc.fr.\\ This research
benefited from the support of the ``Chaire Risques Financiers'', Fondation du Risque, the French National Research Agency under the program ANR-12-BS01-0019 (STAB) and was completed after the hiring of Jacopo Corbetta by Zeliade Systems}
\date{\today}
\begin{document}
\maketitle

\begin{abstract}
  In this paper, for $\mu$ and $\nu$ two probability measures on $\R^d$ with finite moments of order $\rho\ge 1$, we define the respective projections for the $W_\rho$-Wasserstein distance of $\mu$ and $\nu$ on the sets of probability measures dominated by $\nu$ and of probability measures larger than $\mu$ in the convex order. The $W_2$-projection of $\mu$ can be easily computed when $\mu$ and $\nu$ have finite support by solving a quadratic optimization problem with linear constraints. In dimension $d=1$, Gozlan et al.~\cite{GRSST} have shown that the projections do not depend on $\rho$. We explicit their quantile functions in terms of those of $\mu$ and $\nu$. The motivation is the design of sampling techniques preserving the convex order in order to approximate Martingale Optimal Transport problems by using linear programming solvers. We prove convergence of the Wasserstein projection based sampling methods as the sample sizes tend to infinity and illustrate them by numerical experiments.
\end{abstract}

\noindent {{\bf Keywords:} \it Convex order, Martingale Optimal Transport, Wasserstein distance, Sampling techniques, Linear Programming }\\
\noindent {{\bf AMS Subject Classification (2010):} \it 91G60, 90C08, 60G42, 60E15.}
      
\section{Introduction}

For $\mu,\nu$ in the set ${\mathcal P}(\R^d)$ of probability measures on $\R^d$, we say that $\mu$ is smaller than $\nu$ for the convex order and denote $\mu\lecx\nu$ if $\int_{\R^d}\phi(x)\mu(dx)\le\int_{\R^d}\phi(y)\nu(dy)$ for each convex function $\phi:\R^d\to\R$ non-negative or integrable with respect to $\mu+\nu$.
Up to our knowledge, few studies consider the problem of preserving the convex order while approximating two such probability measures. We can mention the one-dimensional method based on the quantile functions proposed by David Baker in his PhD thesis~\cite{Baker} (see the beginning of Section~\ref{parad1} for more details). 
The dual quantization introduced by Pag\`es and Wilbertz~\cite{PaWi} gives another way to preserve the convex order in dimension one (see the remark after Proposition~10 in~\cite{PaWi}). This is unfortunately no longer true for higher dimensions. Take for example the case of the probability laws $\mu=\delta_{(0,0)}$ and $\nu$ the distribution on $(U,0)$ with $U$ uniform on~$[-1,1]$. We have $\mu \lecx \nu$. We calculate their dual quantizers $\bar{\mu}$ and $\bar\nu$ on the two triangles $\mathcal{T}_1$ and $\mathcal{T}_2$ with vertices $\{(-1,0),(0,-1),(0,1)\}$ and $\{(0,-1),(1,0),(0,1)\}$. We easily obtain $\bar{\mu}=\frac{1}{2}(\delta_{(0,-1)}+\delta_{(0,1)})$, $\bar{\nu}=\frac{1}{4}(\delta_{(0,-1)}+\delta_{(0,1)}+\delta_{(-1,0)}+\delta_{(1,0)} )$. Thus, we have $ \int x^2 \bar{\mu}(dx,dy)=1$,  $\int x^2 \bar{\nu}(dx,dy)=\frac{1}{2}$, which proves that the convex order is not preserved. However, the quantization and the dual quantization give a possible way to approximate $\mu$ and $\nu$ in the convex order. Precisely, the quantization of $\mu$ gives a probability measure $\underline{\mu}$ with finite support such that $\underline{\mu}\lecx\mu$ while the dual quantization of $\nu$ gives a probability measure $\nu$ with finite support such that $\nu\lecx\bar{\nu}$. We therefore have $\underline{\mu}\lecx\bar{\nu}$. Though being general, this construction has several drawbacks. First, to define the dual quantization, $\nu$ and therefore $\mu$ must have a compact support. This is a very restrictive assumption. Second, the calculation of the quantization of $\mu$ and of the dual quantization of $\nu$ is in general not obvious in dimension $d\ge 2$ and may require an important computation time. This is why one usually pre-calculates the quantization for standard distributions, see~\cite{PaPr} for the Gaussian case. Third, this method only works for two measures and does not generalize to design approximations of $\mu,\nu,\eta\in{\mathcal P}(\R^d)$ preserving the convex order when $\mu\lecx\nu\lecx\eta$. 

          To avoid the curse of dimension, it is natural to look at the Monte-Carlo method and to consider the empirical measures $\mu_I=\frac1I \sum_{i=1}^I \delta_{X_i}$ and $\nu_J=\frac1J \sum_{j=1}^J \delta_{Y_j}$, where $X_1,\dots,X_I$ (resp. $Y_1,\dots,Y_J$) are i.i.d. random variables with distribution~$\mu$ (resp. $\nu$). Clearly, there is no reason to have $\frac1I \sum_{i=1}^I X_i = \frac1J \sum_{j=1}^J Y_j$ (a necessary condition for the convex order from the choices $\phi(x_1,\hdots,x_d)=\pm x_k$ with $k\in\{1,\hdots,d\}$) and even more to have   $\mu_I \lecx \nu_J$. In dimension $d=1$, according to Kertz and R\"osler~\cite{KeRo1,KeRo2}, the set of probability measures with a finite first moment is a complete lattice for the increasing and decreasing convex orders. The present paper stems from our preprint~\cite{ACJ1} (Sections 3 and 4), where, in Section 2 devoted to the one-dimensional case, we also investigate the approximation of $\mu_I$ by $\mu_I\wedge \nu_J$ (resp $\nu_J$ by $\mu_I\vee\nu_J$) defined as the infimum of $\mu_I$ and $\nu_J$ for the decreasing convex order when $\frac1I \sum_{i=1}^I X_i\le \frac1J \sum_{j=1}^J Y_j$ and for the increasing convex order otherwise so that $\mu_I\wedge \nu_J\lecx \nu_J$ (resp. $\mu_I\lecx\mu_I\vee\nu_J$). Unfortunately, this approach does not generalize to dimension $d\ge 2$, where, according to Proposition 4.5~\cite{MuSc}, even the set of probability measures with a constant expectation is no longer a lattice for the convex order.
In the present paper, still looking for modifications of $\mu_I$ smaller than $\nu_J$ in the convex order, we introduce the following minimization problem where $\rho\ge 1$
\begin{equation}
   \begin{cases}\text{minimize } \frac{1}{I}\sum_{i=1}^I \left|X_i-\sum_{j=1}^Jr_{ij} Y_j\right|^\rho\\
 \mbox{under the constraints $\forall i,j, \ r_{ij}\ge 0$, $\forall i, \  \sum_{j=1}^J r_{ij}=1$ and $\forall j, \  \sum_{i=1}^I r_{ij}=\frac{I}{J}$}
   \end{cases}\label{miniquad}.
\end{equation}
For $\rho=2$, this is a quadratic optimization problem with linear constraints which can be solved efficiently numerically (see Section~\ref{secnum}).
In general, this is the minimization of a continuous function on a compact set and there exists a minimizer $r^\star$. We then define
$$\mu_{I,J}^{\rho,\star} = \frac{1}{I} \sum_{i=1}^I \delta_{X^\star_i}, \text{ with } X^\star_i=\sum_{j=1}^Jr^\star_{ij} Y_j. $$
By construction, we have $\mu_{I,J}^{\rho,\star}\lecx\nu_J$. In the next section, we generalize this problem by considering, in place of the point measures $\mu_I$ and $\nu_J$, general elements of ${\mathcal P}_\rho(\R^d)=\{\eta\in{\mathcal P}(\R^d):\int_{\R^d}|x|^\rho\eta(dx)<\infty\}$ with $\rho\ge 1$ denoted (with a slight abuse of notation) by $\mu$ and $\nu$. This leads us to define the projection $\mu^\rho_{\underline{\mathcal P}(\nu)}$ of $\mu$ on the set $\underline{\mathcal P}(\nu)=\{\eta\in{\mathcal P}(\R^d):\eta\lecx\nu\}$ of probability measures dominated by $\nu$ in the convex order for the Wasserstein distance with index $\rho$ :
$$W_\rho(\mu,\eta)=\min_{\pi \in \Pi(\mu,\eta)} \left( \int_{\R^d\times \R^d}|x-y|^\rho \pi(dx,dy) \right)^{1/\rho},$$
where $\Pi(\mu,\nu)$ the set of probability measures $\pi$ on $\R^d \times \R^d$ with marginal laws $\mu$ and $\nu$, i.e. $\pi(A\times\R^d)=\mu(A)$ and $\pi(\R^d\times A)=\nu(A)$ for any Borel set $A\subset \R^d$. We show that this projection is well defined for $\rho>1$ and study some of its properties. Notice that after our preprint~\cite{ACJ1}, Gozlan and Juillet~\cite{GoJu} and Backhoff-Varaguas et al.~\cite{B-VBP} have recently considered the projection for $\rho=2$. In dimension $d=1$, according to Gozlan et al.~\cite{GRSST} Theorem 1.5, the projection does not depend on $\rho$. We explicit its quantile function in terms of the quantile functions of $\mu$ and $\nu$ so that it can be computed by efficient algorithms when $\mu$ and $\nu$ have finite supports. In Section~\ref{convexapproxmultidim}, we prove that, when $\mu\lecx\nu$, then $W_\rho(\mu,(\mu_{I})^\rho_{\underline{\mathcal P}(\nu_J)})\le 2W_\rho(\mu,\mu_I)+W_\rho(\nu,\nu_J)$ and deduce that $(\mu_{I})^\rho_{\underline{\mathcal P}(\nu_J)}$ converges weakly to $\mu$ as $I,J\rightarrow +\infty$. Moreover, we extend the construction to the sampling of several probability measures ranked in the convex order. Section~\ref{wasprojnu} is devoted to the projection $\nu^\rho_{\bar{\mathcal P}(\mu)}$ of $\nu$ on the set $\bar{\mathcal P}(\mu)=\{\eta\in{\mathcal P}(\R^d):\mu\lecx\eta\}$ of probability measures larger than $\mu$ in the convex order for the Wasserstein distance with index $\rho$.
Last, in Section~\ref{secnum}, we illustrate by numerical experiments the Wasserstein projection based sampling methods and their application to approximate Martingale Optimal Transport problems. One important motivation of this paper is indeed to tackle numerically the Martingale Optimal Transport (MOT) problem introduced in~\cite{BeHLPe}, which has received a recent and great attention in finance to get model-free bounds on option prices. A family of probability measures on $\R^d$ $(Q(x,dy))_{x\in\R^d}$ is called a Markov kernel on $\R^d$ if for any Borel set $A\subset \R^d$, $\R^d\ni x\mapsto Q(x,A)$ is measurable. We define $\Pi^M(\mu,\nu)=\{ \pi\in \Pi(\mu,\nu):   \forall x\in\R^d,\;\int_{\R^d}|y|\pi_{Y|X}(x,dy)<\infty\mbox{ and }\int_{\R^d}y\pi_{Y|X}(x,dy)=x\}$ where $\pi_{Y|X}$ denotes a Markov kernel such that $\pi(dx,dy)=\mu(dx)\pi_{Y|X}(x,dy)$, the set of martingale couplings. Theorem 8 in Strassen~\cite{Strassen} ensures that, when $\nu\in{\mathcal P}_1(\R^d)$, $\mu \lecx \nu \iff \Pi^M(\mu,\nu)\not = \emptyset$. For a measurable payoff function $c:\R^d \times \R^d \rightarrow \R$, the MOT problem consists in finding an optimal coupling $\pi^\star\in \Pi^M(\mu,\nu)$ that minimizes (or maximizes)
\begin{equation}\label{MOT_continuous}\int_{\R^d \times \R^d} c(x,y) \pi(dx,dy)
\end{equation}
among all couplings $\pi\in \Pi^M(\mu,\nu)$. In finance, this problem arises naturally if one considers the  prices of $d$ assets $S_{T_1},S_{T_2}$ at dates $T_1<T_2$. We assume zero interest rates and suppose that we can observe the marginal laws~$\mu$ (resp. $\nu$) of $S_{T_1}$ (resp. $S_{T_2}$) from option prices on the market and that we want to price an option that pays $c(S_{T_1},S_{T_2})$ at date $T_2$. Any martingale coupling $\pi \in \Pi^M(\mu,\nu)$ is an arbitrage free pricing model: the supremum and the infimum of $\int_{\R^d \times \R^d} c(x,y) \pi(dx,dy)$ over all these couplings give model free bounds on the option price. From the dual formulation of the problem, Beiglb\"ock, Penkner and Henry-Labord\`ere~\cite{BeHLPe} have proved that the upper (resp. lower) bound is the cheapest (resp. most expensive) initial value among superhedging (resp. subhedging) strategies.
 To compute the model free bounds on the option price, one may consider approximating the probability measures $\mu$ and $\nu$ by probability measures with finite supports (typically the empirical measures of i.i.d. samples) ${\mu}_I=\sum_{i=1}^I p_i \delta_{x_i}$ and ${\nu}_J=\sum_{j=1}^J q_j \delta_{y_j}$, with $I,J\in \N^*$, $x_i,y_j\in \R^d, \ p_i,q_j> 0$ for any $i,j$ and $\sum_{i=1}^I p_i=\sum_{j=1}^Jq_j=1$ and solve the approximate MOT problem: to minimize (or maximize) \begin{equation}\label{MOT_discrete}\sum_{i=1}^I\sum_{j=1}^J r_{ij} c(x_i,y_j)
 \end{equation}
over $(r_{ij})_{1\le i\le I,1\le j\le J}$ under the constraints $$r_{ij}\ge 0,\ \sum_{i=1}^I p_ir_{ij}=q_j, \ \sum_{j=1}^J r_{ij}=1 \text{ and }\sum_{j=1}^J r_{ij}y_j=x_i.$$ This problem falls into the realm of linear programming: powerful algorithms have been developed to solve it numerically. The key issue to run these algorithms is the existence of such matrices $(r_{ij})_{1\le i\le I,1\le j \le J}$, that amounts to the existence of a martingale coupling between $\mu_I$ and $\nu_J$. By Strassen's theorem, this is equivalent to have $\mu_I \lecx \nu_J$, which motivates the interest of preserving the convex order when sampling both the probability measures $\mu$ and $\nu$. It is very natural in the financial application to consider empirical measures with $I=J$: once a stochastic model is calibrated to European option market prices, one basically samples it at different times to price exotic options, which gives the empirical measures at those times.

\section{Wasserstein projection of $\mu$ on the set of probability measures dominated by $\nu$ in the convex order} 
\subsection{Definition, existence and uniqueness}
For a Markov kernel $R(x,dy)$ on $\R^d$, we set
$$m_R(x)=\int_{\R^d}yR(x,dy)\mbox{ for }x\in \R^d\mbox{ s.t. }\int_{\R^d}|y|R(x,dy)<\infty.$$
It is well known (see~\cite{DeMe} pages 78--80 or~\cite{Pollard} page 117) that if $\pi \in \Pi(\mu,\nu)$, there exists a $\mu(dx)$-a.e. unique Markov kernel $R$ such that  $\mu(dx)R(x,dy)=\pi(dx,dy)$. This kernel satisfies obviously $\int_{x\in \R^d} \mu(dx)R(x,dy)=\nu(dy)$, which we note $\mu R = \nu$ later on. Conversely, if $R$ is a kernel satisfying $\mu R = \nu$ then $\mu(dx)R(x,dy)$ defines a probability measure in~$\Pi(\mu,\nu)$.
We define $\cP(\R^d)$ the set of probability measures on $\R^d$ and, for $\rho \ge 1$, 
$$\mathcal{P}_\rho(\R^d)=\{ \mu\in \cP(\R^d), \  \int_{\R^d}|x|^\rho \mu(dx)< \infty \},$$
the set of probability measures with finite moment of order~$\rho$. 

Suppose that $\nu\in\mathcal{P}_1(\R^d)$ and $R$ is a Markov kernel such that $\mu R=\nu$. Then $$\int_{\R^d\times\R^d}|y|R(x,dy)\mu(dx)=\int_{\R^d}|y|\nu(dy)<\infty$$ so that $m_R(x)$ is defined $\mu(dx)$-a.e.. Moreover for each convex function $\phi:\R^d\rightarrow \R$ such that $\sup_{x\in\R^d}\frac{|\phi(x)|}{1+|x|}<\infty$, by Jensen's inequality,  
\begin{align*}
   \int_{\R^d}\phi(y)\nu(dy)&=\int_{\R^d\times\R^d}\phi(y)\mu(dx)R(x,dy)\\&\ge \int_{\R^d}\phi\left(\int_{\R^d}yR(x,dy)\right)\mu(dx)=\int_{\R^d}\phi(m_R(x))\mu(dx).
\end{align*}
Despite the restriction on the growth of the convex function $\phi$, by Lemma~\ref{lem_ordre_cvx} below, this ensures that $m_R\#\mu\lecx\nu$.

For $\rho\ge 1$ and $\mu,\nu\in{\mathcal P}_\rho(\R^d)$,  we consider the following generalization of the minimization problem \eqref{miniquad} :
\begin{equation*}
   \begin{cases}
      \mbox{Minimize }{\mathcal J}_\rho(R):=\int_{\R^d}|x-m_R(x)|^\rho\mu(dx)\\
\mbox{under the constraint that $R$ is a Markov kernel such that $\mu R=\nu$}
   \end{cases}.
\end{equation*}
Note that this problem is a particular case of the general transport costs considered by Gozlan et al.~\cite{GRST} and Alibert et al. ~\cite{ABC}, who are interested in duality results and by Backhoff-Veraguas et al.~\cite{B-VBP} who deal with existence of optimal transport plans and necessary and sufficient optimality conditions in the spirit of cyclical monotonicity. Gozlan and Juillet~\cite{GoJu} characterize optimal transport plans between $\mu$ and $\nu$ for the cost $J_2$.
When the $X_i$ are distinct, \eqref{miniquad} is recovered by setting $$R(x,dy)=\begin{cases}\sum_{j=1}^J r_{ij}\delta_{Y_j}(dy)\mbox{ if }x=X_i\mbox{ for some }i\in\{1,\hdots,I\}\\
   \delta_x(dy)\mbox{ if } x\notin\{X_1,\hdots,X_I\}\end{cases}.$$
At optimality in \eqref{miniquad}, by Jensen's inequality $\sum_{j=1}^J r_{ij}Y_j=\sum_{j=1}^J r_{kj}Y_j$ when $X_i=X_k$ for $1\le k\ne i\le I$ and the problem \eqref{miniquad}  modified with the additional constraint $\sum_{j=1}^Jr_{ij}Y_j=\sum_{j=1}^Jr_{kj}Y_j$ when $X_i=X_k$ is recovered by setting
$$R(x,dy)=\begin{cases}\frac{1}{\sum_{i=1}^I 1_{\{X_i=x\}}}\sum_{i:X_i=x}\sum_{j=1}^J r_{ij}\delta_{Y_j}(dy)\mbox{ if }x\in\{X_1,\hdots,X_I\}\\
   \delta_x(dy)\mbox{ if } x\notin\{X_1,\hdots,X_I\}\end{cases}.$$
According to the next theorem the generalized problem is equivalent to the computation of the projection of $\mu$ on the set of probability measures dominated by $\nu$ in the convex order for the $\rho$-Wasserstein distance.

\begin{theorem}\label{lem_couplage_Rd_gen}
  Let $\rho\ge 1$, $\mu,\nu \in \mathcal{P}_\rho(\R^d)$. One has $\inf_{R:\mu R=\nu}\mathcal{J}_\rho(R)=\inf_{\eta\in\underline{\mathcal P}(\nu)}W_\rho^\rho(\mu,\eta)$ where both infima are attained. If $\rho>1$, then the functions $\{m_{R_\star}:\;\mu R_\star=\nu\mbox{ and }\mathcal{J}_\rho(R_\star)=\inf_{R:\mu R=\nu}\mathcal{J}_\rho(R)\}$ are $\mu(dx)$ a.e. equal, $\mu^\rho_{\underline{\mathcal P}(\nu)}:=m_{R_\star}\#\mu$ is the unique $\eta\lecx\nu$ minimizing  $W_\rho^\rho(\mu,\eta)$ and $\mu(dx)\delta_{m_{R_\star}(x)}(dy)$ the unique optimal transport plan $\pi\in\Pi(\mu,\mu^\rho_{\underline{\mathcal P}(\nu)})$ such that $W_\rho^\rho(\mu,\mu^\rho_{\underline{\mathcal P}(\nu)})=\int_{\R^d\times \R^d}|x-y|^\rho \pi(dx,dy)$.
\end{theorem}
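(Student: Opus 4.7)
The plan is to set up two dual constructions — one turning a Markov kernel into a transport plan, one turning a transport plan into a Markov kernel via Strassen — and then extract existence and (for $\rho>1$) uniqueness from strict convexity of $|\cdot|^\rho$. Given any kernel $R$ with $\mu R=\nu$, the calculation preceding the theorem shows $m_R\#\mu\lecx\nu$, and the deterministic coupling $(\mathrm{Id},m_R)\#\mu\in\Pi(\mu,m_R\#\mu)$ has transport cost equal to $\mathcal{J}_\rho(R)$, so $\inf_\eta W_\rho^\rho(\mu,\eta)\le\mathcal{J}_\rho(R)$. Conversely, fix $\eta\lecx\nu$ and an optimal plan $\pi=\mu(dx)K(x,dy)$ for $W_\rho^\rho(\mu,\eta)$; since $\nu\in\mathcal P_1(\R^d)$ and hence $\eta\in\mathcal P_1(\R^d)$, Strassen's theorem produces a martingale kernel $M$ with $\eta M=\nu$ and $m_M(y)=y$. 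The composition $R(x,dz):=\int K(x,dy)M(y,dz)$ satisfies $\mu R=\nu$ and $m_R=m_K$, and Jensen applied fibrewise to $K(x,\cdot)$ and $|\cdot|^\rho$ gives
\[
\mathcal{J}_\rho(R)=\int\left|\int(x-y)K(x,dy)\right|^\rho\mu(dx)\le\int|x-y|^\rho\pi(dx,dy)=W_\rho^\rho(\mu,\eta),
\]
so $\inf_R\mathcal{J}_\rho(R)\le\inf_\eta W_\rho^\rho(\mu,\eta)$ and the two infima agree.

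\textbf{Existence.} Because $|\cdot|^\rho$ is convex, any $\eta\lecx\nu$ satisfies $\int|x|^\rho\eta(dx)\le\int|y|^\rho\nu(dy)$, so $\underline{\mathcal P}(\nu)$ has uniformly bounded $\rho$-moments, is tight, and is weakly closed (the convex-order inequalities pass to weak limits once the moment bound is in hand). Combined with weak lower semicontinuity of $W_\rho^\rho(\mu,\cdot)$, this yields a minimizer $\eta^\star$, and feeding $\eta^\star$ through the Strassen composition above produces a kernel $R^\star$ with $\mathcal{J}_\rho(R^\star)\le W_\rho^\rho(\mu,\eta^\star)=\inf_R\mathcal{J}_\rho(R)$, hence a minimizer of $\mathcal{J}_\rho$.

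\textbf{Uniqueness for $\rho>1$.} The key input is strict convexity of the Euclidean $|\cdot|^\rho$ on $\R^d$ when $\rho>1$. If $R_1^\star,R_2^\star$ both minimize $\mathcal{J}_\rho$, then $\bar R:=(R_1^\star+R_2^\star)/2$ is admissible with $m_{\bar R}=(m_{R_1^\star}+m_{R_2^\star})/2$, and strict convexity forces $m_{R_1^\star}=m_{R_2^\star}$ $\mu(dx)$-a.e.; otherwise $\mathcal{J}_\rho(\bar R)$ would be strictly smaller than $\tfrac{1}{2}\mathcal{J}_\rho(R_1^\star)+\tfrac{1}{2}\mathcal{J}_\rho(R_2^\star)$. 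To upgrade this to uniqueness of the minimizing $\eta$ and of the optimal coupling, let $\eta$ be any $W_\rho$-minimizer and $\pi=\mu(dx)K(x,dy)$ any optimal plan for it; the kernel $R$ obtained by Strassen composition satisfies $\mathcal{J}_\rho(R)\le W_\rho^\rho(\mu,\eta)=\inf_R\mathcal{J}_\rho(R)$, so $R$ is itself optimal and equality holds throughout. In particular Jensen is an equality for $\mu(dx)$-a.e. $x$, and strict convexity of $|\cdot|^\rho$ on $\R^d$ forces $K(x,\cdot)$ to be a Dirac mass at $m_K(x)=m_R(x)=m_{R^\star}(x)$; hence $\pi=\mu(dx)\delta_{m_{R^\star}(x)}(dy)$ and $\eta=m_{R^\star}\#\mu$ are unique. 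The main subtlety I expect is this last implication: one really needs strict convexity of $|\cdot|^\rho$ on all of $\R^d$ (and not merely along rays) so that the equality case in Jensen produces a Dirac, rather than only a measure supported on a line.
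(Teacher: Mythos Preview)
Your proof is correct and follows essentially the same route as the paper: both directions of the infima equality come from the Strassen composition $R=KM$ plus Jensen, existence from weak compactness of $\underline{\mathcal P}(\nu)$ and lower semicontinuity of $W_\rho^\rho(\mu,\cdot)$, and uniqueness from strict convexity of $|\cdot|^\rho$ via the averaged kernel $\bar R=(R_1^\star+R_2^\star)/2$ and the equality case in Jensen. The only point where the paper is more careful is the weak closedness of $\underline{\mathcal P}(\nu)$ when $\rho=1$: a uniform first-moment bound alone does not give uniform integrability, so the paper proves it directly from the martingale kernel via the estimate $\int|x|1_{\{|x|\ge K\}}\eta(dx)\le\int|y|1_{\{|y|\ge\sqrt K\}}\nu(dy)+K^{-1/2}\int|y|\nu(dy)$.
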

When $\rho>1$, $\mu^\rho_{\underline{\mathcal P}(\nu)}$ is the projection of $\mu$ on the set of probability measures dominated by $\nu$ in the convex order and $\mu^\rho_{\underline{\mathcal P}(\nu)}\lecx\nu$. \begin{proof}
For $\eta\in\cP(\R^d)$, 
\begin{align*}
   W_\rho^\rho(\mu,\eta)\le \int_{\R^d\times\R^d}|x-y|^\rho\mu(dx)\eta(dy)\le 2^{\rho-1}\left(\int_{\R^d}|x|^\rho\mu(dx)+\int_{\R^d}|y|^\rho\eta(dy)\right).
\end{align*}
where the right-hand side is finite if $\eta\in\underline{\mathcal P}(\nu)$ since $\sup_{\eta\in\underline{\mathcal P}(\nu)}\int_{\R^d}|x|^\rho\eta(dx)=\int_{\R^d}|x|^\rho\nu(dx)$. By the Markov inequality and the Prokhorov theorem, this last bound implies that $\underline{\mathcal P}(\nu)$ is relatively compact for the weak convergence topology. For $K\in(0,\infty)$ and $\eta\lecx\nu$, denoting by $R$ a martingale kernel such that $\eta R=\nu$, we have
\begin{align*}
   \int_{\R^d}|x|1_{\{|x|\ge K\}}\eta(dx)&=\int_{\R^d}\left|\int_{\R^d}yR(x,dy)\right|1_{\{|x|\ge K\}}\eta(dx)\le \int_{\R^d\times \R^d}|y|1_{\{|x|\ge K\}}R(x,dy)\eta(dx)\\&\le \int_{\R^d\times \R^d}|y|1_{\{|y|\ge \sqrt{K}\}}R(x,dy)\eta(dx)+\sqrt{K}\int_{\R^d}1_{\{|x|\ge K\}}\eta(dx)\\&\le \int_{\R^d}|y|1_{\{|y|\ge \sqrt{K}\}}\nu(dy)+\frac{\int_{\R^d}|x|\eta(dx)}{\sqrt{K}}\\&\le \int_{\R^d}|y|1_{\{|y|\ge \sqrt{K}\}}\nu(dy)+\frac{\int_{\R^d}|y|\nu(dy)}{\sqrt{K}}.
\end{align*}
For $(\eta_n)_n$ a sequence in $\{\eta\in{\mathcal P}(\R^d):\eta\lecx\nu\}$ weakly converging to $\eta_\infty$, this implies uniform integrability ensuring that for $\phi:\R^d\to\R$ continuous and such that $\sup_{x\in\R^d}\frac{|\phi(x)|}{1+|x|}<\infty$, $\lim_{n\to\infty}\int_{\R^d}\phi(x)\eta_n(dx)=\int_{\R^d}\phi(x)\eta_\infty(dx)$. With Lemma~\ref{lem_ordre_cvx} below and the continuity of real valued convex functions on $\R^d$, we deduce that $\eta_\infty\in\underline{\mathcal P}(\nu)$. Hence $\underline{\mathcal P}(\nu)$ is compact for the weak convergence topology.

Since $\eta\mapsto W_\rho^\rho(\mu,\eta)$ is lower-semicontinuous for this topology, there exists $\eta_\star\in\underline{\mathcal P}(\nu)$ such that $W_\rho^\rho(\mu,\eta_\star)=\inf_{\eta\in\underline{\mathcal P}(\nu)}W_\rho^\rho(\mu,\eta)$.  Let $P$ be a martingale Markov kernel such that $\eta_\star P=\nu$ and $Q$ a Markov kernel such that $\mu Q=\eta_\star$ and
$W_\rho^\rho(\mu,\eta_\star)=\int_{\R^d\times\R^d}|x-y|^\rho Q(x,dy)\mu(dy)$.
One has $\mu QP=\eta_\star P=\nu $ and, by martingality of $P$,  $$m_{QP}(x)=\int_{\R^d\times\R^d}zP(y,dz)Q(x,dy)=\int_{\R^d}y Q(x,dy).$$ With Jensen's inequality, we deduce that
\begin{equation}
   W_\rho^\rho(\mu,\eta_\star)=\int_{\R^d\times\R^d}|x-y|^\rho Q(x,dy)\mu(dy)\ge \int_{\R^d}\left|x-\int_{\R^d}yQ(x,dy)\right|^\rho \mu(dx)=\mathcal{J}_\rho(QP).\label{mrq}
\end{equation}

On the other hand, for any Markov kernel $R$ such that $\mu R=\nu$, $m_R\#\mu\lecx \nu$ and $\mathcal{J}_\rho(R)=\int_{\R^d}|x-m_{R}(x)|^\rho\mu(dx)\ge W_\rho^\rho(\mu,m_{R}\#\mu)$. Hence $$\inf_{R:\mu R=\nu}\mathcal{J}_\rho(R)\ge \inf_{\eta\in\underline{\mathcal P}(\nu)}W_\rho^\rho(\mu,\eta)=W_\rho^\rho(\mu,\eta_\star)\ge \mathcal{J}_\rho(QP)\ge \inf_{R:\mu R=\nu}\mathcal{J}_\rho(R)$$
so that both infima are equal and $\mathcal{J}_\rho(QP)=\inf_{R:\mu R=\nu}\mathcal{J}_\rho(R)$. Moreover, the inequality in \eqref{mrq} is an equality. If $\rho>1$, by strict convexity of $x\mapsto |x|^\rho$, this implies that $\mu(dx)$ a.e. $R(x,dy)=\delta_{m_{QP}(x)}(dy)$ so that $\eta_\star=\mu Q=m_{QP}\#\mu$.

  For $\rho>1$, the uniqueness of $m_{R_\star}$ is also obtained from the strict convexity of $x\mapsto |x|^\rho$. Namely, for any  optimal kernel $R_\star$ we have
  \begin{align*}
    & \mathcal{J}_\rho((R_\star+ QP)/2) =  \int_{\R^d} \left|x-\frac{m_{R_\star}(x)+m_{QP}(x)}{2} \right|^\rho \mu(dx)  \\
    &\le \int_{\R^d} \frac 12 |x-m_{R_\star}(x)|^\rho +\frac 12|x-m_{QP}(x)|^\rho \mu(dx)=\frac{1}{2}(\mathcal{J}_\rho(R_\star)+\mathcal{J}_\rho(QP))=\inf_{R:\mu R=\nu}\mathcal{J}_\rho(R).  
  \end{align*}
  Since $\mu\frac{R_\star+ QP}2=\nu$, we necessarily have $\mathcal{J}_\rho((R_\star+ QP)/2)=\inf_{R:\mu R=\nu}\mathcal{J}_\rho(R)$ and then $m_{R_\star}(x)=m_{QP}(x)$, $\mu(dx)$-a.e..
\end{proof}

\begin{remark} When $\rho=1$, let us give an example of non-uniqueness for the optimal functions $m_R$ and the probability measures $\eta_\star\in\underline{\mathcal P}(\nu)$ such that $W_1(\mu,\eta_\star)=\inf_{\eta\in\underline{\mathcal P}(\nu)}W_1(\mu,\eta)$.
  Let $\mu(dx)=1_{[0,1]}(dx)$ (resp. $\nu(dy)=1_{[1,2]}(dy)$) be the uniform law on $[0,1]$ (resp. $[1,2]$). We have
  $$\inf_{R:\mu R=\nu}\mathcal{J}_1(R)\ge\inf_{R:\mu R=\nu} \left|\int_{\R} x \mu(dx)-\int_{\R} m_R(x) \mu(dx)\right|= \left|\int_{\R} x \mu(dx)-\int_{\R} y \nu(dy)\right|=1.$$
For $\lambda \in [0,1]$, $R_\lambda(x,dy)=(1-\lambda) \delta_{1+x}(dy)+\lambda\delta_{2-x}(dy) $ is such that $\mu R_\lambda=\nu$, $m_{R_\lambda}(x)=(1+\lambda)+ (1-2\lambda)x$ and $m_{R_\lambda}\#\mu$ is the uniform law on $[(1+\lambda)\wedge(2-\lambda),(1+\lambda)\vee(2-\lambda)]$. Using that $m_{R_\lambda}(x)\ge 1$ for $x\in (0,1)$ for the first equality, we have
$$W_1(\mu,m_{R_\lambda}\#\mu)\le\mathcal{J}_1(R_\lambda)=\int_0^1 1+\lambda -2\lambda x dx=1=\inf_{\eta\in\underline{\mathcal P}(\nu)}W_1(\mu,\eta).$$
Thus all the kernels $R_\lambda$ and pushforward measures $m_{R_\lambda}\#\mu$ are optimal.
\end{remark}

\begin{example}\label{exemple_translation} Let $\mu\lecx \nu$ and $\rho\ge 1$. We assume that $\nu\in \cP_\rho$, which implies that $\mu \in \cP_\rho$. For $\alpha \in \R^d$, let $\mu^\alpha$ be the image of $\mu$ by $x\mapsto x+\alpha$. Then, for any kernel $R$ such that $\mu^\alpha R = \nu$,
  \begin{align*}
    \int_{\R^d} |x-m_R(x)|^\rho &\mu^\alpha (dx) \ge \left| \int_{\R^d} x-m_R(x)\mu^\alpha(dx)\right|^\rho \\&=\left| \int_{\R^d} x\mu^\alpha(dx)- \int_{\R^d}y \nu(dy) \right|^\rho=\left| \int_{\R^d} x\mu^\alpha(dx)- \int_{\R^d} x\mu(dx) \right|^\rho=|\alpha|^\rho.
  \end{align*}
This lower bound is attained for $R(x,dy)=Q(x-\alpha,dy)$, where $Q$ is any martingale kernel such that $\mu Q= \nu$, since $m_R(x)=x-\alpha$ for this choice. Therefore, for $\rho>1$, $(\mu^{\alpha})^\rho_{\underline{\mathcal P}(\nu)}=\mu$.
\end{example}
Let us observe that if $\mu,\nu \in \mathcal{P}_\rho(\R^d)$ with $\rho>1$, then we have $\mu,\nu \in \mathcal{P}_{\rho'}(\R^d)$ for any $\rho'\in (1,\rho)$. In general, as in the next example, $\mu^{\rho'}_{\underline{\mathcal P}(\nu)}$ is different from $\mu^\rho_{\underline{\mathcal P}(\nu)}$.
\begin{example}
  Let $d=2$, $\mu=\frac{1}{2}\left(\delta_{(1,0)}+\delta_{(0,a)}\right)$ with $a\in\R$ and $\nu=\frac{1}{2}\left(\delta_{(1,0)}+\delta_{(-1,0)}\right)$. For $\rho>1$, since, by Theorem~\ref{lem_couplage_Rd_gen}, $\mu^\rho_{\underline{\mathcal P}(\nu)}$ is the image of $\mu$ by some transport map and $\mu^\rho_{\underline{\mathcal P}(\nu)}\lecx\nu$, one has $\mu^\rho_{\underline{\mathcal P}(\nu)}=\frac{1}{2}\left(\delta_{(x_\rho,0)}+\delta_{(-x_\rho,0)}\right)$ for some $x_\rho\in[0,1]$. For $x\in[0,1]$, since the distances between $(x,0)$ and $(0,a)$ and between $(-x,0)$ and $(0,a)$ are equal whereas $(x,0)$ is closer to $(1,0)$ than $(-x,0)$,  one has $2W_\rho^\rho(\mu,\frac{1}{2}\left(\delta_{(x,0)}+\delta_{(-x,0)}\right))=(1-x)^\rho+(a^2+x^2)^{\rho/2}$. Since the unique minimizer of $x\mapsto (1-x)^2+(a^2+x^2)$ on $[0,1]$ is $\frac{1}{2}$, one has $x_2=\frac{1}{2}$ and $\mu^2_{\underline{\mathcal P}(\nu)}=\frac{1}{2}\left(\delta_{(1/2,0)}+\delta_{(-1/2,0)}\right)$. Since the unique minimizer of $x\mapsto (1-x)^3+(5+x^2)^{3/2}$ on $[0,1]$ is $\frac{1}{4}$, for $a\in\{-\sqrt{5},\sqrt{5}\}$, one has $x_3=\frac{1}{4}$ and $\mu^3_{\underline{\mathcal P}(\nu)}=\frac{1}{2}\left(\delta_{(1/4,0)}+\delta_{(-1/4,0)}\right)$.
 \end{example}

Nevertheless, the situation is strikingly different in dimension $d=1$ where, according to Gozlan et al.~\cite{GRSST} Theorem 1.5, the projection does not depend on $\rho$. We are going to explicit this projection by characterizing its quantile function in terms of the quantile functions of $\mu$ and $\nu$.

\subsection{Dimension $d=1$}\label{parad1}
Let $F_{\mu}(x)=\mu((-\infty,x])$ and $F_{\nu}(x)=\nu((-\infty,x])$ be the cumulative distribution functions and for $p \in (0,1)$, $F_\mu^{-1}(p)=\inf \{x \in \R : F_\mu(x)\ge p \}$ and $F_\nu^{-1}(p)=\inf \{x \in \R : F_\nu(x)\ge p \}$ their left-continuous and non-decreasing generalized inverses also called quantile functions. The convex order is characterized as follows in terms of the quantile functions (see Theorem 3.A.5~\cite{ShSh}) : for $\mu,\nu\in{\mathcal P}_1(\R)$,
\begin{equation}
   \mu\lecx\nu\mbox{ iff }\int_0^1F_\mu^{-1}(p)dp=\int_0^1F_\nu^{-1}(p)dp\mbox{ and }\forall q\in(0,1),\;\int_q^1F_\mu^{-1}(p)dp\le\int_q^1F_\nu^{-1}(p)dp.\label{caraccxquant}
\end{equation}
Notice that, as a consequence of this characterization, if $\mu\lecx\nu$, then for $I,k\ge 1$, $\frac{1}{I}\sum_{i=1}^I\delta_{I \int_{\frac{i-1}{I}}^{\frac{i}{I} } F_\mu^{-1}(u) du }\lecx\frac{1}{kI}\sum_{j=1}^{kI}\delta_{kI \int_{\frac{j-1}{kI}}^{\frac{j}{kI} } F_\nu^{-1}(u) du }$, as stated by Baker in Theorem~2.4.11~\cite{Baker}. 
\begin{theorem}\label{propprojd1}
   For $\mu,\nu\in{\mathcal P}_1(\R)$, let $\psi$ denote the convex hull (largest convex function bounded from above by) of the function $[0,1]\ni q\mapsto\int_0^qF_{\mu}^{-1}(p)-F_\nu^{-1}(p)dp$. There exists a probability measure $\mu_{\underline{\mathcal P}(\nu)}$ such that $\forall q\in[0,1]$, $\int_0^qF_{\mu_{\underline{\mathcal P}(\nu)}}^{-1}(p)dp=\int_0^qF_{\mu}^{-1}(p)dp-\psi(q)$. Moreover, $\mu_{\underline{\mathcal P}(\nu)}\in\underline{\mathcal P}(\nu)$ and for each $\rho>1$ such that $\mu,\nu\in{\mathcal P}_\rho(\R)$, $\mu^\rho_{\underline{\mathcal P}(\nu)}=\mu_{\underline{\mathcal P}(\nu)}$. Last, $T(x)=F_\mu^{-1}(F_\mu(x))-\psi'(F_\mu(x)-)$ is non-decreasing and is an optimal transport map : $T\#\mu=\mu_{\underline{\mathcal P}(\nu)}$ and for all~$\rho\ge 1$, $W_\rho^\rho(\mu,\mu_{\underline{\mathcal P}(\nu)})= \int_{\R} |T(x)-x|^\rho \mu(dx)$.
\end{theorem}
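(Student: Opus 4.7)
The plan is to construct a candidate $\mu_{\underline{\mathcal P}(\nu)}$ via its quantile function, verify it is dominated by $\nu$ in the convex order, identify $T$ as the comonotonic transport from $\mu$ to $\mu_{\underline{\mathcal P}(\nu)}$, and then prove that for $\rho>1$ this candidate solves the $W_\rho$-minimisation problem. Write $A(q) = \int_0^q F_\mu^{-1}(p)\,dp$, $B(q) = \int_0^q F_\nu^{-1}(p)\,dp$, so $\varphi = A - B$, and let $\psi$ be the convex hull of $\varphi$ on $[0,1]$; a short argument with affine minorants gives $\psi(0) = \varphi(0) = 0$ and $\psi(1) = \varphi(1)$. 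I would set $G(q) := F_\mu^{-1}(q) - \psi'(q-)$ and aim to show $G$ is non-decreasing and left-continuous, equivalently that $H := A - \psi$ is convex on $[0,1]$.

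The convexity of $H$ is the heart of the proof. Decompose $[0,1] = S \sqcup O$, where $S = \{\psi = \varphi\}$ is closed and $O = \bigsqcup_k(a_k,b_k)$ is its open complement, with $\psi$ affine of slope $s_k$ on each $(a_k,b_k)$. On $S$, $H = B$ is convex, and on each $(a_k,b_k)$, $H(q) = A(q) - \psi(a_k) - s_k(q-a_k)$ is convex; only the junctions require attention. At a left endpoint $a_k$, the inequality $\psi \le \varphi$ just above $a_k$ gives $s_k \le \varphi'(a_k+) = F_\mu^{-1}(a_k+) - F_\nu^{-1}(a_k+)$, and combined with the monotonicity of $F_\nu^{-1}$ this yields $H'(a_k-) = F_\nu^{-1}(a_k) \le F_\mu^{-1}(a_k+) - s_k = H'(a_k+)$ when $a_k$ is a left accumulation point of $S$. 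When instead a previous affine piece of slope $s_{k-1}$ abuts at $a_k$, combining $s_{k-1} \ge \varphi'(a_k-)$ with $s_k \le \varphi'(a_k+)$ produces the jump comparison $s_k - s_{k-1} \le F_\mu^{-1}(a_k+) - F_\mu^{-1}(a_k)$, which is exactly what is required. A symmetric argument treats $b_k$. Hence $G$ is non-decreasing and left-continuous on $(0,1)$, defining a probability measure $\mu_{\underline{\mathcal P}(\nu)}$ with the stated primitive.

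From $\int_0^q F_{\mu_{\underline{\mathcal P}(\nu)}}^{-1}(p)\,dp = H(q) \ge B(q)$ with equality at $q = 1$, the characterisation \eqref{caraccxquant} gives $\mu_{\underline{\mathcal P}(\nu)} \lecx \nu$. Because $F_\mu^{-1}$ is constant on each atom-interval of $\mu$, monotonicity of $G$ forces $\psi'(q-)$, and hence $G$, to be constant there; therefore for $U$ uniform on $(0,1)$, $G(F_\mu(F_\mu^{-1}(U))) = G(U)$ almost surely, so $T = G \circ F_\mu$ satisfies $T\#\mu = \mu_{\underline{\mathcal P}(\nu)}$. Being a composition of non-decreasing functions, $T$ is non-decreasing, so $(X,T(X))$ with $X \sim \mu$ is the comonotonic coupling, which is $W_\rho$-optimal in dimension one; hence $W_\rho^\rho(\mu,\mu_{\underline{\mathcal P}(\nu)}) = \int_\R |T(x)-x|^\rho \mu(dx) = \int_0^1 |\psi'(p-)|^\rho dp$ for every $\rho \ge 1$.

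For optimality when $\rho > 1$, take any $\eta \lecx \nu$ and set $h(p) := F_\mu^{-1}(p) - F_\eta^{-1}(p)$. By \eqref{caraccxquant}, $\int_0^q h(p)\,dp \le \varphi(q)$ with equality at $q=1$, and $W_\rho^\rho(\mu,\eta) = \int_0^1|h(p)|^\rho dp$. Set $w(p) := |\psi'(p-)|^{\rho-1}\sign(\psi'(p-))$, which is non-decreasing and left-continuous since $\rho > 1$. Integrating the pointwise inequality $|h(p)|^\rho \ge |\psi'(p-)|^\rho + \rho\,w(p)(h(p) - \psi'(p-))$ and applying integration by parts against the positive Radon measure $dw$ gives
\[
\int_0^1 w(p)(h(p) - \psi'(p-))\,dp = -\int_{[0,1]} \Bigl(\int_0^p (h(q) - \psi'(q-))\,dq\Bigr) dw(p).
\]
Since $w$ is constant on each $(a_k,b_k)$, the support of $dw$ lies in $S$, on which $\int_0^p(h(q) - \psi'(q-))\,dq = \int_0^p h(q)\,dq - \psi(p) \le \varphi(p) - \psi(p) = 0$; because $dw \ge 0$, the right-hand side is non-negative. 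Therefore $\int_0^1 |h|^\rho\,dp \ge \int_0^1 |\psi'(p-)|^\rho\,dp = W_\rho^\rho(\mu,\mu_{\underline{\mathcal P}(\nu)})$, with equality forcing $h = \psi'(\cdot-)$ almost everywhere by strict convexity, and Theorem~\ref{lem_couplage_Rd_gen} then identifies $\mu^\rho_{\underline{\mathcal P}(\nu)}$ with $\mu_{\underline{\mathcal P}(\nu)}$. The main obstacle is the convexity of $H$ at junctions between affine pieces and contact intervals; a secondary technicality arises when $\psi'(0+) = -\infty$ (possible when $F_\mu^{-1}$ is unbounded below), in which case the integration by parts should be performed on $[\varepsilon, 1-\varepsilon]$ and passed to the limit $\varepsilon \downarrow 0$.
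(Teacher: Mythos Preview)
Your proposal is correct and reaches the same conclusion as the paper, but the route to optimality is genuinely different, and it is worth recording how.

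For the construction, the paper packages the convexity of $H=A-\psi$ into an abstract statement (Lemma~\ref{lemconv}): if $f,g$ are convex and $h$ is the convex hull of $f-g$, then $f-h$ is convex. Its proof checks the convexity inequality at an arbitrary triple of points by reducing, when the middle point lies in $O$, to the endpoints of the covering affine piece. Your contact-set decomposition is the same idea reorganised around one-sided derivatives; it works, but the junction analysis you sketch is delicate when $S$ has Cantor-like structure (your ``left accumulation point of $S$'' versus ``abutting affine piece'' dichotomy is not obviously exhaustive, and you are implicitly using the existence of one-sided derivatives of $H$ while still proving its convexity). The paper's three-point argument sidesteps this.

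The substantive divergence is in the optimality step. The paper proceeds indirectly: it first shows (Lemma~\ref{lem_couplage_R}) that any minimiser $\eta$ must satisfy that $p\mapsto F_\eta^{-1}(p)-F_\mu^{-1}(p)$ is non-increasing, then restricts to the sublattice $\tilde{\mathcal P}(\nu)$ of such $\eta$, and finally invokes the convex-order infimum of Lemma~\ref{leminf} to identify $D(\mu_{\underline{\mathcal P}(\nu)})$ as the smallest element of $\{D(\eta):\eta\in\tilde{\mathcal P}(\nu)\}$, which yields $W_\rho(\mu,\mu_{\underline{\mathcal P}(\nu)})\le W_\rho(\mu,\eta)$ for every competitor. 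Your argument is a direct first-order variational computation: the subgradient inequality for $t\mapsto|t|^\rho$ at $t=\psi'(p-)$, followed by integration by parts against the non-decreasing weight $w=|\psi'|^{\rho-1}\mathrm{sign}(\psi')$, whose Stieltjes measure $dw$ is supported on the contact set $S$ where the primitive constraint is saturated. This is more elementary and self-contained --- it requires neither the monotone-difference lemma nor the lattice infimum --- and it makes the role of the contact set transparent as the support of the ``dual'' measure. The price is the endpoint technicality you flag (unbounded $w$ near $0$ or $1$), which is indeed handled by truncating to $[\varepsilon,1-\varepsilon]$ and using that $\psi'\in L^\rho(0,1)$ together with H\"older to control the boundary terms; the paper's route avoids this by never integrating by parts. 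Both approaches ultimately exploit that $\psi$ is affine off the contact set, but yours does so through duality while the paper's does so through order-theoretic structure.
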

For probability measures $\mu_I=\sum_{i=1}^Ip_i\delta_{x_i}$ (resp. $\nu_J=\sum_{j=1}^Jq_j\delta_{y_j}$) on the real line with $(p_1,\hdots,p_I)\in(0,1]^I$ and $x_1<x_2<\hdots<x_I$ (resp. $(q_1,\hdots,q_J)\in(0,1]^J$ and $y_1<y_2<\hdots<y_J$), the continuous and piecewise affine function $q\mapsto\int_0^qF_{\mu_I}^{-1}(p)-F_{\nu_J}^{-1}(p)dp$ changes slope at $q\in\{\sum_{k=1}^ip_k:1\le i\le I-1\}\cup \{\sum_{k=1}^jq_k:1\le j\le J-1\}$ with a change equal to $\sum_{i=1}^{I-1}1_{\{q=\sum_{k=1}^ip_k\}}(x_{i+1}-x_{i})-\sum_{j=1}^{J-1}1_{\{q=\sum_{k=1}^j q_k\}}(y_{j+1}-y_j)$ (which can be equal to zero if $q=\sum_{k=1}^ip_k=\sum_{k=1}^j q_k$ and $x_{i+1}-x_{i}=y_{j+1}-y_j$). Clearly, $\psi$ is piecewise affine and changes slope at most at points $q\in\{\sum_{k=1}^ip_k:1\le i\le I-1\}$ with changes not greater than $x_{i+1}-x_i$ so that $(\mu_{I})_{\underline{\mathcal P}(\nu_J)}=\sum_{i=1}^Ip_i\delta_{z_i}$ with $z_1\le z_2\le \hdots\le z_I$. The convex hull $\psi$ can be computed by Andrew's monotone chain algorithm and the points $(z_i)_{i\in I}$ are easily deduced. 

The proof of Theorem~\ref{propprojd1} relies on the following lemma and is postponed after its proof.
\begin{lemma}\label{lem_couplage_R} Let $\rho>1$ and $\mu,\nu\in{\mathcal P}_\rho(\R)$. Then
$(0,1)\ni p\mapsto F_{\mu^\rho_{\underline{\mathcal P}(\nu)}}^{-1}(p)-F_\mu^{-1}(p)$ is non-increasing.
\end{lemma}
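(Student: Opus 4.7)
The plan is proof by contradiction, via an explicit perturbation of $\eta^\star:=\mu^\rho_{\underline{\mathcal P}(\nu)}$ that stays in $\underline{\mathcal P}(\nu)$ and strictly reduces $W_\rho^\rho(\mu,\cdot)$, contradicting the minimality statement of Theorem~\ref{lem_couplage_Rd_gen}. Writing $F:=F_\mu^{-1}$, $G:=F_{\eta^\star}^{-1}$ on $(0,1)$ and $s:=F-G$, the classical one-dimensional identity
\begin{equation*}
W_\rho^\rho(\mu,\eta)=\int_0^1\bigl|F_\mu^{-1}(u)-F_\eta^{-1}(u)\bigr|^\rho du,\qquad \mu,\eta\in\mathcal{P}_\rho(\R),
\end{equation*}
(achieved by the comonotone coupling) gives $W_\rho^\rho(\mu,\eta^\star)=\int_0^1|s|^\rho du$. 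The lemma amounts to $s$ being non-decreasing; I assume the contrary, so $s(p_1)>s(p_2)$ for some $0<p_1<p_2<1$. Since $F,G$ are non-decreasing they have only countably many jumps and are left-continuous, so perturbing $p_1,p_2$ slightly to the left one may assume that they are common continuity points of $F$ and $G$; moreover $F(p_2)\ge F(p_1)$ together with $s(p_2)<s(p_1)$ force $G(p_2)>G(p_1)$.

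For small $\epsilon,t>0$ put $I_i:=(p_i-\epsilon,p_i+\epsilon)$ (disjoint as soon as $\epsilon<(p_2-p_1)/2$), and define
\begin{equation*}
\tilde G_t(u):=G(u)+t\,\ind_{I_1}(u)-t\,\ind_{I_2}(u),
\end{equation*}
letting $\tilde\eta_t$ be the pushforward of the Lebesgue measure on $(0,1)$ by $\tilde G_t$; equivalently, $\tilde\eta_t$ is $\eta^\star$ with the mass on $G(I_1)$ shifted right by $t$ and that on $G(I_2)$ shifted left by $t$. For $\epsilon,t$ small enough, continuity of $G$ at $p_1,p_2$ and $G(p_2)>G(p_1)$ yield $G(p_2-\epsilon)-G(p_1+\epsilon)\ge 2t$. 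Using that $y\mapsto\phi(y+t)-\phi(y)$ is non-decreasing for every convex $\phi$ and that $|I_1|=|I_2|=2\epsilon$, one finds
\begin{align*}
\int\phi\,d\tilde\eta_t-\int\phi\,d\eta^\star
&=\int_{I_1}\!\bigl[\phi(G+t)-\phi(G)\bigr]du-\int_{I_2}\!\bigl[\phi(G)-\phi(G-t)\bigr]du\\
&\le 2\epsilon\bigl\{[\phi(G(p_1+\epsilon)+t)-\phi(G(p_1+\epsilon))]-[\phi(G(p_2-\epsilon))-\phi(G(p_2-\epsilon)-t)]\bigr\}\le 0,
\end{align*}
the last bound being the convexity inequality $\phi(a+t)-\phi(a)\le\phi(b)-\phi(b-t)$ valid for $a+t\le b$. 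Hence $\tilde\eta_t\lecx\eta^\star\lecx\nu$, i.e.\ $\tilde\eta_t\in\underline{\mathcal P}(\nu)$.

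The pair $(F(U),\tilde G_t(U))$ with $U$ uniform on $(0,1)$ is a coupling of $(\mu,\tilde\eta_t)$, whence
\begin{equation*}
W_\rho^\rho(\mu,\tilde\eta_t)-W_\rho^\rho(\mu,\eta^\star)\le\int_{I_1}\!(|s-t|^\rho-|s|^\rho)\,du+\int_{I_2}\!(|s+t|^\rho-|s|^\rho)\,du.
\end{equation*}
A first-order expansion in $t$, combined with the continuity of $s$ at $p_1,p_2$, gives as $\epsilon,t\downarrow 0$
\begin{equation*}
\int_{I_1}\!(|s-t|^\rho-|s|^\rho)\,du+\int_{I_2}\!(|s+t|^\rho-|s|^\rho)\,du=2\epsilon\rho t\bigl[f(s(p_2))-f(s(p_1))\bigr]+o(\epsilon t),
\end{equation*}
where $f(x):=|x|^{\rho-1}\sign(x)$ is \emph{strictly} increasing on $\R$ because $\rho>1$. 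Since $s(p_2)<s(p_1)$ the leading term is strictly negative; fixing $\epsilon$ small and then $t$ small enough produces $W_\rho^\rho(\mu,\tilde\eta_t)<W_\rho^\rho(\mu,\eta^\star)$, contradicting the minimality of $\eta^\star$. The main technical obstacle is that $\mu$ and $\eta^\star$ may have atoms (i.e.\ $F$ and $G$ may be discontinuous): working at common continuity points $p_1,p_2$ and using $G(p_2)>G(p_1)$ to keep the shifted regions $G(I_1)+t$ and $G(I_2)-t$ disjoint is what makes both the convex-order verification and the uniform first-order expansion go through. A minor extra care is needed in the expansion at points where $s$ vanishes, where the remainder is controlled by the crude bound $|s-t|^\rho-|s|^\rho\le t^\rho=o(t)$, still compatible with the $o(\epsilon t)$ claim.
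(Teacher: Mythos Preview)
Your proof is correct and follows a genuinely different route from the paper's. The paper works with a \emph{global} rearrangement: on the set $I_\eta=\{(p,q):(p-q)(s(q)-s(p))>0\}$ where monotonicity of $s=F_\mu^{-1}-F_\eta^{-1}$ fails, it introduces a symmetric kernel $\alpha(p,q)\in[0,1)$ and replaces $F_\eta^{-1}(p)$ by the average $\int_0^1[\alpha(p,q)F_\eta^{-1}(q)+(1-\alpha(q,p))F_\eta^{-1}(p)]\,dq$; two applications of Jensen's inequality (one strict, exploiting the strict convexity of $|\cdot|^\rho$) simultaneously give the strict decrease of the cost and the domination $\tilde\eta\lecx\eta$. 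Your approach is instead a localized two-point perturbation followed by a first-order (variational) expansion. The paper's construction avoids selecting continuity points and the double limit in $(\epsilon,t)$, producing an explicit improved measure in one stroke; your argument is more elementary and closer in spirit to a standard calculus-of-variations contradiction, and makes the role of strict monotonicity of $x\mapsto|x|^{\rho-1}\sign(x)$ very transparent.

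One stylistic remark: the joint ``$o(\epsilon t)$'' expansion is cleaner if phrased exactly as you do in your last sentence---first fix $\epsilon$ small enough that $\int_{I_1}f(s)\,du>\int_{I_2}f(s)\,du$ (possible since the averages converge to $f(s(p_1))>f(s(p_2))$ by continuity of $s$ at $p_1,p_2$), and then differentiate the one-variable map $t\mapsto\int_{I_1}(|s-t|^\rho-|s|^\rho)\,du+\int_{I_2}(|s+t|^\rho-|s|^\rho)\,du$ at $t=0$ under the integral sign, using that $|x|^\rho$ is $C^1$ for $\rho>1$ and that $s$ is bounded on $\overline{I_1}\cup\overline{I_2}\subset(0,1)$.
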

\begin{proof}
It is enough to check that if $p\mapsto F_\eta^{-1}(p)-F^{-1}_\mu(p)$ is not non-increasing for some $\eta\in\underline{\mathcal P}(\nu)$, one can find $\tilde\eta\in\underline{\mathcal P}(\nu)$ such that $W_\rho^\rho(\mu,\tilde\eta)<W_\rho^\rho(\mu,\eta)$ where, according to Proposition 2.17~\cite{santambrogio}, $W_\rho^\rho(\mu,\eta)=\int_0^1|F_\eta^{-1}(p)-F_\mu^{-1}(p)|^\rho dp$.
With the left-continuity of $p\mapsto F_\eta^{-1}(p)-F^{-1}_\mu(p)$, the lack of monotonicity of this function is equivalent to
$$0<\int_{(0,1)^2}1_{I_\eta}(p,q)dpdq\mbox{ where }I_\eta=\{(p,q):(p-q)(F_\eta^{-1}(p)-F^{-1}_\mu(p)-F_\eta^{-1}(q)+F^{-1}_\mu(q))>0\}.$$
Let $\alpha(p,q)=1_{I_\eta}(p,q)\frac{F_\eta^{-1}(p)-F^{-1}_\mu(p)-F_\eta^{-1}(q)+F^{-1}_\mu(q)}{2(F_\eta^{-1}(p)-F_\eta^{-1}(q))}$, where one easily checks that the denominator does not vanish on $I_\eta$ and that $0\le\alpha(p,q)=\alpha(q,p)<1$. For $(p,q)\in I_\eta$, $$\alpha(p,q)F_\eta^{-1}(q)+(1-\alpha(q,p))F_\eta^{-1}(p)-F^{-1}_\mu(p)=\frac{F_\eta^{-1}(p)-F^{-1}_\mu(p)+F_\eta^{-1}(q)-F^{-1}_\mu(q)}{2},$$ so that by strict convexity,
\begin{align*}
 \frac{1}{2}\bigg(|F_\eta^{-1}(p)-F^{-1}_\mu(p)|^\rho&+|F_\eta^{-1}(q)-F^{-1}_\mu(q)|^\rho\bigg)\\>  &|\alpha(p,q)F_\eta^{-1}(q)+(1-\alpha(q,p))F_\eta^{-1}(p)-F^{-1}_\mu(p)|^\rho.
\end{align*}
With Jensen's inequality, we deduce that
\begin{align*}
 W_\rho^\rho(\mu,\eta)=&\frac{1}{2}\int_{(0,1)^2}|F_\eta^{-1}(p)-F^{-1}_\mu(p)|^\rho+|F_\eta^{-1}(q)-F^{-1}_\mu(q)|^\rho dpdq\\>&\int_{(0,1)^2}|\alpha(p,q)F_\eta^{-1}(q)+(1-\alpha(q,p))F_\eta^{-1}(p)-F^{-1}_\mu(p)|^\rho dpdq\\\ge& \int_0^1\left|\int_0^1\alpha(p,q)F_\eta^{-1}(q)+(1-\alpha(q,p))F_\eta^{-1}(p)dq-F^{-1}_\mu(p)\right|^\rho dp.
\end{align*}
The right-hand side is not smaller than $W_\rho^\rho(\mu,\tilde \eta)$ where $\tilde \eta$ denotes the image of the Lebesgue measure on $(0,1)$ by $p\mapsto \int_0^1\alpha(p,q)F_\eta^{-1}(q)+(1-\alpha(q,p))F_\eta^{-1}(p)dq$. For $\phi:\R\to\R$ convex and such that $\sup_{x\in\R}\frac{|\phi(x)|}{1+|x|^\rho}<\infty$, by Jensen's inequality, \begin{align*}
\int_\R\phi(x)\tilde\eta(dx)\le
   \int_{(0,1)^2}\alpha(p,q)\phi(F_\eta^{-1}(q))+(1-\alpha(q,p))\phi(F_\eta^{-1}(p))dqdp=\int_0^1\phi(F_\eta^{-1}(q))dq.
\end{align*}
Since the right-hand side is equal to $\int_\R\phi(x)\eta(dx)$, by Lemma~\ref{lem_ordre_cvx} below, one has $\tilde\eta\in\underline{\mathcal P}(\nu)$.

\end{proof}
 
\begin{proof}[Proof of Theorem~\ref{propprojd1}] 
Let $U$ be uniformly distributed on $(0,1)$.

Since for all $q\in[0,1]$, $\int_0^qF_\mu^{-1}(p)-F_\nu^{-1}(p)dp\ge \int_0^qF_\mu^{-1}(p)dp-q\int_0^1F_\nu^{-1}(p)dp$ where the right-hand side is a convex function of $q$, one has $\psi(0)=0$ and $\psi(1)=\int_0^1F_\mu^{-1}(p)-F_\nu^{-1}(p)dp$. By Lemma~\ref{lemconv} below, the convexity of both $q\mapsto \int_0^qF_{\mu}^{-1}(p)dp$ and $q\mapsto \int_0^qF_{\nu}^{-1}(p)dp$ implies that $q\mapsto\int_0^qF_{\mu}^{-1}(p)dp-\psi(q)$ is convex. Let $f$ denote the left-hand derivative of this function and $\mu_{\underline{\mathcal P}(\nu)}$ the probability distribution of $f(U)$. By Lemma~\ref{lemquant} below, $f$ is equal to $F^{-1}_{\mu_{\underline{\mathcal P}(\nu)}}$ so that $\forall q\in[0,1]$, $\int_0^qF_{\mu_{\underline{\mathcal P}(\nu)}}^{-1}(p)dp=\int_0^qF_{\mu}^{-1}(p)dp-\psi(q)$.

Let $q\in[0,1]$. Since $\psi(q)\le \int_0^qF_\mu^{-1}(p)-F_\nu^{-1}(p)dp$ with equality when $q=1$, one has $\int_0^qF_{\mu_{\underline{\mathcal P}(\nu)}}^{-1}(p)dp=\int_0^qF_{\mu}^{-1}(p)dp-\psi(q)\ge \int_0^q F_\nu^{-1}(p)dp$ with equality when $q=1$ so that by~\eqref{caraccxquant}, $\mu_{\underline{\mathcal P}(\nu)}\lecx\nu$.
By concavity of $[0,1]\ni q\mapsto -\psi(q)=\int_0^q F_{\mu_{\underline{\mathcal P}(\nu)}}^{-1}(p)-F_\mu^{-1}(p)dp$, the left-continuous function $(0,1)\ni p\mapsto F_{\mu_{\underline{\mathcal P}(\nu)}}^{-1}(p)-F_\mu^{-1}(p)$ is non-increasing.

The set $$\tilde{\mathcal P}(\nu):=\{\eta\in\underline{\mathcal P}(\nu): (0,1)\ni p\mapsto F_\eta^{-1}(p)-F_\mu^{-1}(p)\mbox{ is non-increasing}\}$$ is not empty since $\mu_{\underline{\mathcal P}(\nu)},\delta_{\int_\R y\nu(dy)}\in\tilde{\mathcal P}(\nu)$. 
Let $D(\eta)$ denote the distribution of $F_\eta^{-1}(1-U)-F_\mu^{-1}(1-U)$ for $\eta\in\tilde{\mathcal P}(\nu)$. For all $\eta\in\tilde{\mathcal P}(\nu)$, $\int_\R|x|D(\eta)(dx)<\infty$ and $\int_\R xD(\eta)(dx)=\E\left[F_\eta^{-1}(1-U)-F_\mu^{-1}(1-U)\right]=\int_\R x(\nu-\mu)(dx)$. By Lemma~\ref{leminf} below,  the set $\{D(\eta):\eta\in\tilde{\mathcal P}(\nu)\}$ admits an infimum $\pi$ for the convex order and for all $q\in[0,1]$, $\int_q^1F_{\pi}^{-1}(p)dp=\inf_{\eta \in\tilde{\mathcal P}(\nu)}\int_q^1F_{D(\eta)}^{-1}(p)dp$. For $\eta\in\tilde{\mathcal P}(\nu)$, since $(0,1)\ni p\mapsto F_\eta^{-1}(1-p)-F^{-1}_\mu(1-p)$ is non-decreasing, by Lemma~\ref{lemquant}, $p\mapsto F_{D(\eta)}^{-1}(p)$ and $p\mapsto F_\eta^{-1}(1-p)-F^{-1}_\mu(1-p)$ coincide away from the at most countable set of their common discontinuities, with the former left-continuous and the latter right-continuous. Hence for $q\in[0,1]$, $$\int_q^1F_{\pi}^{-1}(p)dp=\inf_{\eta\in\tilde{\mathcal P}(\nu)}\int_0^{1-q}F_\eta^{-1}(p)-F^{-1}_\mu(p)dp=-\sup_{\eta\in\tilde{\mathcal P}(\nu)}\int_0^{1-q}F^{-1}_\mu(p)-F_\eta^{-1}(p)dp$$
where the right-hand side is not greater than $\int_0^{1-q}F_{\mu_{\underline{\mathcal P}(\nu)}}^{-1}(p)-F^{-1}_\mu(p)dp$ since $\mu_{\underline{\mathcal P}(\nu)}\in\tilde{\mathcal P}(\nu)$.
Since $\eta\in\tilde{\mathcal P}(\nu)$ iff $F_\mu^{-1}-F_\eta^{-1}$ is non-decreasing, $\int_0^1|F_\eta^{-1}(p)|dp<\infty$, $\int_0^1F_\eta^{-1}(p)dp=\int_0^1F_\nu^{-1}(p)dp$ and for all $q\in[0,1]$, $\int_0^{1-q}F_\eta^{-1}(p)dp\ge\int_0^{1-q}F_\nu^{-1}(p)dp$ (see \eqref{caraccxquant}), the definition of $\psi$ implies that for all $q\in[0,1]$, $\sup_{\eta\in\tilde{\mathcal P}(\nu)}\int_0^{1-q}F^{-1}_\mu(p)-F_\eta^{-1}(p)dp\le \psi(1-q)=\int_0^{1-q}F^{-1}_\mu(p)-F_{\mu_{\underline{\mathcal P}(\nu)}}^{-1}(p)dp$. Hence $\int_q^1F_{\pi}^{-1}(p)dp=\int_0^{1-q}F_{\mu_{\underline{\mathcal P}(\nu)}}^{-1}(p)-F_{\mu}^{-1}(p)dp=\int_q^1F_{D(\mu_{\underline{\mathcal P}(\nu)})}^{-1}(p)dp$ for all $q\in[0,1]$ which ensures that $\pi$ is the distribution $D(\mu_{\underline{\mathcal P}(\nu)})$ of $F_{\mu_{\underline{\mathcal P}(\nu)}}^{-1}(1-U)-F_\mu^{-1}(1-U)$. Therefore, if $\rho>1$ is such that $\mu,\nu\in{\mathcal P}_\rho(\R)$, 
\begin{align*}
   W_\rho^\rho(\mu,\mu_{\underline{\mathcal P}(\nu)})&=\E\left[|F_{\mu_{\underline{\mathcal P}(\nu)}}^{-1}(1-U)-F_\mu^{-1}(1-U)|^\rho\right]=\int_{\R}|x|^\rho\pi(dx)\\&\le \inf_{\eta\in \tilde{\mathcal P}(\nu)}\E\left[|F_\mu^{-1}(1-U)-F_\eta^{-1}(1-U)|^\rho\right]=\inf_{\eta\in \tilde{\mathcal P}(\nu)}W_\rho^\rho(\mu,\eta)=\inf_{\eta\in\underline{\mathcal P}(\nu)}W_\rho^\rho(\mu,\eta),
\end{align*}
where we used the definition of $\pi$ and the convexity of $\R\ni x\mapsto|x|^\rho$ for the inequality and Lemma~\ref{lem_couplage_R} for the final equality.
Since, by Theorem~\ref{lem_couplage_Rd_gen},  $\mu^\rho_{\underline{\mathcal P}(\nu)}$ is the unique minimizer of $W_\rho^\rho(\mu,\eta)$ on $\underline{\mathcal P}(\nu)$, we conclude that $\mu_{\underline{\mathcal P}(\nu)}=\mu^\rho_{\underline{\mathcal P}(\nu)}$.

From the left-continuity of the quantile functions, we get $F_{\mu_{\underline{\mathcal P}(\nu)}}^{-1}(p)=F_{\mu}^{-1}(p)-\psi'(p-)$ for $p\in(0,1)$, and this function is non-decreasing. Thus, $T$ is nondecreasing. To conclude the proof, it is now sufficient to check that $T(F_{\mu}^{-1}(p))=F_\mu^{-1}(p)-\psi'(p-)$ for a.e. $p\in(0,1)$. Indeed, combined with the inverse transform sampling and Proposition 2.17~\cite{santambrogio}, this ensures that $T\#\mu=\mu_{\underline{\mathcal P}(\nu)}$ and
$$W_\rho^\rho(\mu,\mu_{\underline{\mathcal P}(\nu)})=\int_0^1|F_\mu^{-1}(p)-T(F_\mu^{-1}(p))|^\rho dp=\int_\R|x-T(x)|^\rho\mu(dx).$$
By definition of the quantile function $F_\mu^{-1}$, for all $x\in\R$, $F_\mu^{-1}(F_\mu(x))\le x$ and by right-continuity of $F_\mu$, for all $p\in (0,1)$, $F_\mu(F_\mu^{-1}(p))\ge p$. With the monotonicity of $F_\mu$ we deduce that for all $x\in\R$ such that $F_\mu(x)\in (0,1)$, $F_\mu(F_\mu^{-1}(F_\mu(x)))=F_\mu(x)$. Therefore, if $p\in (0,1)$ is such that $p=F_\mu(x)$ for some $x\in \R$, then $T(F_{\mu}^{-1}(p))=F_\mu^{-1}(p)-\psi'(p-)$. Otherwise, $p\in [F_\mu(x-),F_\mu(x))$ for some $x\in\R$ such that $\mu(\{x\})>0$. We observe that $F_\mu^{-1}(q)$ and $\psi'(q-)$ are constant on $(F_\mu(x-),F_\mu(x)]$ since $q\mapsto \int_0^q F_\mu^{-1}(u)-F_\nu^{-1}(u)du$ is concave on this interval. For $p\in (F_\mu(x-),F_\mu(x)]$, we have $F_\mu^{-1}(p)= x$ and we get $T(F_\mu^{-1}(p))=F_\mu^{-1}(F_\mu(x))-\psi'(F_\mu(x)-)=F_\mu^{-1}(p)-\psi'(p-)$. Therefore the equality holds for $p$ outside the countable set $\{F_\mu(x-):x\in\R\mbox{ s.t. }\mu(\{x\})>0\}$.
\end{proof}

\section{Approximations in the convex order}\label{convexapproxmultidim}
The next proposition is the key result to construct approximations of probability measures that preserve the convex order.

\begin{proposition}\label{prop_cv_curlywedge}
  Let $\rho\ge 1$, $\mu,\nu,\mu_I,\nu_J\in \mathcal{P}_\rho(\R^d)$ such that $\mu \lecx \nu$. Then, we have
  $$W_\rho(\mu,(\mu_I)^\rho_{\underline{\mathcal P}(\nu_J)}) \le 2W_\rho(\mu,\mu_I)+W_\rho(\nu,\nu_J),$$
where, for $\rho=1$, by a slight abuse of notation, $(\mu_I)^1_{\underline{\mathcal P}(\nu_J)}$ denotes any $\eta_\star\in\underline{\mathcal P}(\nu_J)$ such that $W_1(\mu_I,\eta_\star)=\inf_{\eta\in\underline{\mathcal P}(\nu_J)}W_1(\mu_I,\eta)$.\end{proposition}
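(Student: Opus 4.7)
My plan is to build a candidate element of $\underline{\mathcal P}(\nu_J)$ close to $\mu_I$ by gluing together three couplings, and then conclude by the triangle inequality for $W_\rho$. Specifically, let $\pi_1\in\Pi(\mu_I,\mu)$ be $W_\rho$-optimal, $\pi_2\in\Pi(\nu,\nu_J)$ be $W_\rho$-optimal, and, using $\mu\lecx\nu$ together with Strassen's theorem, pick a martingale coupling $\pi_M(dx,dy)=\mu(dx)P(x,dy)$ with $\int_{\R^d}yP(x,dy)=x$. The gluing lemma provides a probability measure $Q$ on $(\R^d)^4$ supporting random variables $(X_I,X,Y,Y_J)$ such that $(X_I,X)\sim \pi_1$, $(X,Y)\sim\pi_M$, $(Y,Y_J)\sim\pi_2$, and along which $X_I\to X\to Y\to Y_J$ is a Markov chain.

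Next I will introduce the candidate $Z:=\mathbb{E}_Q[Y_J\mid X_I]$ and let $\eta$ denote its law. Since $Z$ is a conditional expectation of a variable with law $\nu_J$, for every convex function $\phi$ Jensen's inequality gives $\int\phi \,d\eta=\mathbb{E}_Q[\phi(\mathbb{E}_Q[Y_J\mid X_I])]\le\mathbb{E}_Q[\phi(Y_J)]=\int\phi\,d\nu_J$, so $\eta\in\underline{\mathcal P}(\nu_J)$. The crucial observation is that the Markov-chain structure together with the martingale property of $P$ yields
\begin{equation*}
\mathbb{E}_Q[Y\mid X_I]=\mathbb{E}_Q\bigl[\mathbb{E}_Q[Y\mid X,X_I]\mid X_I\bigr]=\mathbb{E}_Q[X\mid X_I],
\end{equation*}
so that $\mathbb{E}_Q[X-Y\mid X_I]=0$. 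Consequently,
\begin{equation*}
X_I-Z=\mathbb{E}_Q[X_I-Y_J\mid X_I]=\mathbb{E}_Q\bigl[(X_I-X)+(X-Y)+(Y-Y_J)\mid X_I\bigr]=\mathbb{E}_Q\bigl[(X_I-X)+(Y-Y_J)\mid X_I\bigr].
\end{equation*}

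From here I take absolute values, apply Jensen conditionally to the convex function $|\cdot|^\rho$, and then Minkowski's inequality to obtain
\begin{equation*}
W_\rho(\mu_I,\eta)\le\bigl(\mathbb{E}_Q|X_I-Z|^\rho\bigr)^{1/\rho}\le\bigl(\mathbb{E}_Q|X_I-X|^\rho\bigr)^{1/\rho}+\bigl(\mathbb{E}_Q|Y-Y_J|^\rho\bigr)^{1/\rho}=W_\rho(\mu_I,\mu)+W_\rho(\nu,\nu_J),
\end{equation*}
where the last equality uses optimality of $\pi_1$ and $\pi_2$. Since $\eta\in\underline{\mathcal P}(\nu_J)$ and, by Theorem~\ref{lem_couplage_Rd_gen}, $(\mu_I)^\rho_{\underline{\mathcal P}(\nu_J)}$ minimizes $W_\rho(\mu_I,\cdot)$ over $\underline{\mathcal P}(\nu_J)$ (the minimizer existing also when $\rho=1$ by the very same compactness and lower-semicontinuity argument used in the proof of that theorem, which justifies the abuse of notation for $\rho=1$), I conclude by a triangle inequality:
\begin{equation*}
W_\rho\bigl(\mu,(\mu_I)^\rho_{\underline{\mathcal P}(\nu_J)}\bigr)\le W_\rho(\mu,\mu_I)+W_\rho\bigl(\mu_I,(\mu_I)^\rho_{\underline{\mathcal P}(\nu_J)}\bigr)\le W_\rho(\mu,\mu_I)+W_\rho(\mu_I,\eta)\le 2W_\rho(\mu,\mu_I)+W_\rho(\nu,\nu_J).
\end{equation*}

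The only subtle point is the step $\mathbb{E}_Q[Y\mid X_I]=\mathbb{E}_Q[X\mid X_I]$: it relies on the gluing producing a genuine Markov chain, so that conditioning on $(X,X_I)$ reduces to conditioning on $X$, and then invoking the martingale identity $\int yP(x,dy)=x$. Everything else is Jensen, Minkowski, and the triangle inequality.
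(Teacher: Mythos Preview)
Your proof is correct and follows essentially the same approach as the paper: the paper composes the optimal kernels $Q^\rho_{\mu_I}$, the martingale kernel $R$, and $Q^\rho_\nu$, then bounds $\mathcal{J}_\rho(Q^\rho_{\mu_I}RQ^\rho_\nu)$ by using the martingale property to cancel the middle increment, Jensen, and Minkowski --- which is exactly your gluing-lemma construction and the identity $\mathbb{E}_Q[Y\mid X_I]=\mathbb{E}_Q[X\mid X_I]$, phrased with kernels rather than conditional expectations. Your version has the minor advantage of treating $\rho=1$ explicitly, whereas the paper's proof simply writes ``We consider $\rho>1$'' (the argument being identical for $\rho=1$).
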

Let $\mu,\nu \in \cP_\rho(\R^d)$ be such that $\mu \lecx \nu$. From Proposition~\ref{prop_cv_curlywedge},  if we have approximations $\mu_I$ and $\nu_J$ that satisfy $W_\rho(\mu,\mu_I)\underset{I\rightarrow +\infty}\rightarrow 0$ and $W_\rho(\nu,\nu_J)\underset{J\rightarrow +\infty}\rightarrow 0$, then $(\mu_I)^\rho_{\underline{\mathcal P}(\nu_J)}$ also approximates $\mu$ since we have $W_\rho(\mu,(\mu_I)^\rho_{\underline{\mathcal P}(\nu_J)})\underset{I,J\rightarrow +\infty}\rightarrow 0$. In particular, if we take i.i.d. samples $(X_i)_{i\ge 1}$ (resp. $(Y_j)_{j\ge 1}$) distributed according to~$\mu$ (resp. $\nu$), the empirical measure $\mu_I=\frac 1I \sum_{i=1}^I\delta_{X_i}$ (resp. $\nu_J=\frac 1J \sum_{j=1}^J\delta_{Y_j}$) satisfy $W_\rho(\mu,\mu_I)\underset{I\rightarrow +\infty}\rightarrow 0$ (resp. $W_\rho(\nu,\nu_J)\underset{J\rightarrow +\infty}\rightarrow 0$) almost surely. Indeed, the law of large numbers gives the almost sure weak convergence of $\mu_I$ towards $\mu$ as well as the almost sure convergence of $\frac{1}{I}\sum_{i=1}^I |X_i|^\rho$ to $\int_{\R^d}|x|^\rho \mu(dx)$. By Proposition~7.1.5 of~\cite{AGS}, we get $W_\rho(\mu,\mu_I)\underset{I\rightarrow +\infty}\rightarrow 0$ almost surely. Under more restrictive assumptions on the measures $\mu$ and $\nu$, we can have almost sure estimates on the rate of convergence. Let us assume that $\mu$ is such that $\mathcal{E}_{\alpha,\gamma}=\int_{\R^d} e^{\gamma|x|^\alpha} \mu(dx)<\infty$ for some $\alpha>\rho$ and $\gamma>0$. Then, by  Theorem~2 of Fournier and Guillin~\cite{FoGu}, there are constants $c,C>0$ depending on $\rho,d,\alpha,\gamma,\mathcal{E}_{\alpha,\gamma}$ such that
$$\forall x\in (0,1), \P(W_\rho(\mu,\mu_I)>x)=\P(W^\rho_\rho(\mu,\mu_I)>x^\rho)\le C\exp(-c I x^{d\vee(2\rho)}).$$
Therefore we have $\sum_{I=2}^\infty \P\left(W_\rho(\mu,\mu_I)> \left(\frac{2\log(I)}{cI}  \right)^{\frac{1}{d\vee(2\rho)}} \right)\le C\sum_{I=2}^\infty I^{-2}<\infty$, which gives that almost surely, there exists $I_0$ such that $\forall I \ge I_0, W_\rho(\mu,\mu_I)\le \left(\frac{2\log(I)}{cI}  \right)^{\frac{1}{d\vee(2\rho)}}$. Since $x \mapsto e^{\gamma|x|^\alpha}$ is convex, $\int_{\R^d} e^{\gamma|x|^\alpha} \nu(dx)<\infty \implies \int_{\R^d} e^{\gamma|x|^\alpha} \mu(dx)<\infty$, in which case we have both $W_\rho(\mu,\mu_I)=\mathcal{O}\left(\left(\frac{\log(I)}{I}  \right)^{\frac{1}{d\vee(2\rho)}} \right)$ and $W_\rho(\nu,\nu_J)=\mathcal{O}\left(\left(\frac{\log(J)}{J}  \right)^{\frac{1}{d\vee(2\rho)}} \right)$ and thus $$W_\rho(\mu,(\mu_{I})_{\underline{\mathcal P}(\nu_J)}^\rho)\underset{I,J\rightarrow +\infty}=\mathcal{O}\left(\left(\frac{\log(I\wedge J)}{I \wedge J}  \right)^{\frac{1}{d\vee(2\rho)}} \right), a.s.$$ 
Theorem~2 of~\cite{FoGu} also gives upper bounds of $\P(W_\rho(\mu,\mu_I)>x)$ under different weaker assumptions on~$\mu$. We can repeat the same argument in those cases and get a weaker rate of convergence of  $W_\rho(\mu,\mu_I)$ towards~$0$.

We now briefly consider the multi-marginal case.  Let $\rho\ge 1$, $\ell\ge 2$, $I_1,\hdots,I_\ell$ be positive integers and $\mu^1,\dots,\mu^\ell$ be probability measures on $\R^d$  such that $\mu^1\lecx \dots \lecx \mu^\ell$ and $\int_{\R^d} |x|^\rho\mu^\ell(dx) <\infty$. We consider for $1\le k \le \ell$, $\mu^k_{I_k}=\frac{1}{I_k}\sum_{i=1}^{I_k} \delta_{X^k_i}$ the empirical measure of an i.i.d. sample $X^k_1,\dots,X^k_{I_k}$ distributed according to~$\mu^k$. Let us set $\mu^{\ell,\rho}_{I_\ell}=\mu^{\ell}_{I_\ell}$ and define (using for $\rho=1$ the abuse of notation made in Proposition~\ref{prop_cv_curlywedge}) by backward induction for $k\in\{1,\hdots,\ell-1\}$,  the projection $\mu^{k,\rho}_{I_k,\hdots,I_{\ell} }$ of $\mu^{k}_{I_k}$ on the set $\underline{\mathcal P}(\mu^{k+1,\rho}_{I_{k+1},\hdots,I_\ell})$ for the $W_\rho$-Wasserstein distance. Then, by Proposition~\ref{prop_cv_curlywedge}, we have for $1\le k \le \ell-1$,
$$W_\rho(\mu^k,\mu^{k,\rho}_{I_k,\hdots,I_{\ell} })\le 2W_\rho(\mu^k,\mu^k_{I_k})+ W_\rho(\mu^{k+1},\mu^{k+1,\rho}_{I_{k+1},\hdots,I_{\ell} }).$$
Therefore, we deduce by induction that
$$W_\rho(\mu^k,\mu^{k,\rho}_{I_k,\hdots,I_{\ell} })\le 2 \sum_{k'=k}^{\ell-1}W_\rho(\mu^{k'},\mu^{k'}_{I_{k'}})+ W_\rho(\mu^{\ell},\mu^{\ell}_{I_{\ell}}).$$
We eventually get the following result. 
\begin{proposition}\label{propconv2}
 Let $\rho\ge 1$, $\mu^1,\dots,\mu^\ell$ be probability measures on $\R^d$ such that $\mu^1\lecx \dots \lecx \mu^\ell$ and $\int_{\R^d} |x|^\rho\mu^\ell(dx) <\infty$. 
 Then, as $I_1,\dots,I_\ell \rightarrow + \infty$, $\sum_{k=1}^\ell W_\rho(\mu^k, \mu^{k,\rho}_{I_k,\hdots,I_{\ell} })$ converges almost surely to~$0$. Besides, if $\int_{\R^d} e^{\gamma|x|^\alpha} \mu^\ell(dx)$ for some $\alpha>\rho$ and $\gamma>0$, we have a.s. $\sum_{k=1}^\ell W_\rho(\mu^k,\mu^{k,\rho}_{I_k,\hdots,I_{\ell} }) \underset{\min_{k=1,\dots,\ell} I_k \rightarrow +\infty}{=}\mathcal{O}\left(\left(\frac{\log( \min_{k=1,\dots,\ell} I_k)}{\min_{k=1,\dots,\ell} I_k}  \right)^{\frac{1}{d\vee(2\rho)}} \right)$.
\end{proposition}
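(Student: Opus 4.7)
The plan is to start from the inductive bound
\[
W_\rho(\mu^k,\mu^{k,\rho}_{I_k,\hdots,I_{\ell}})\le 2 \sum_{k'=k}^{\ell-1}W_\rho(\mu^{k'},\mu^{k'}_{I_{k'}})+ W_\rho(\mu^{\ell},\mu^{\ell}_{I_{\ell}}),
\]
which has already been derived by iterating Proposition~\ref{prop_cv_curlywedge}. Summing this inequality over $k\in\{1,\hdots,\ell\}$ and collecting terms yields a bound of the form
\[
\sum_{k=1}^\ell W_\rho(\mu^k,\mu^{k,\rho}_{I_k,\hdots,I_{\ell}})\le C_\ell\sum_{k=1}^\ell W_\rho(\mu^k,\mu^k_{I_k}),
\]
for a combinatorial constant $C_\ell$ depending only on~$\ell$. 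This reduces both assertions to their analogues for the empirical approximations $\mu^k_{I_k}$ of each individual marginal.

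For the first assertion, I would first observe that $\mu^k\in\cP_\rho(\R^d)$ for every $k<\ell$: this follows from $\mu^k\lecx\mu^\ell$ applied to the convex function $x\mapsto|x|^\rho$. The almost sure convergence $W_\rho(\mu^k,\mu^k_{I_k})\to 0$ for each fixed~$k$ then follows from the strong law of large numbers combined with Proposition~7.1.5 of~\cite{AGS}, exactly as recalled in the discussion following Proposition~\ref{prop_cv_curlywedge}. A finite sum of almost surely null limits is still null almost surely, which gives the first claim.

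For the rate under the exponential moment hypothesis, the key preliminary step is to verify that the assumption propagates to all earlier marginals: since $\alpha>\rho\ge 1$, the function $x\mapsto e^{\gamma|x|^\alpha}$ is the composition of the convex nondecreasing function $t\mapsto e^{\gamma t^\alpha}$ on $[0,\infty)$ with the norm, hence convex on $\R^d$. The convex order $\mu^k\lecx\mu^\ell$ therefore gives $\int e^{\gamma|x|^\alpha}\mu^k(dx)\le\int e^{\gamma|x|^\alpha}\mu^\ell(dx)<\infty$ for every $k<\ell$. I would then apply Theorem~2 of~\cite{FoGu} together with the Borel--Cantelli lemma, exactly as in the two-marginal discussion above, independently to each index~$k$, to obtain $W_\rho(\mu^k,\mu^k_{I_k})=\mathcal{O}\bigl((\log I_k/I_k)^{1/(d\vee(2\rho))}\bigr)$ almost surely. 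The announced rate follows by bounding each $\log(I_k)/I_k$ by $\log(\min_k I_k)/\min_k I_k$, which is licit once $\min_k I_k\ge 3$ since $t\mapsto \log t/t$ is nonincreasing on $[e,\infty)$.

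I do not foresee any serious obstacle: the inductive Wasserstein contraction of Proposition~\ref{prop_cv_curlywedge} provides essentially all the work, and the two remaining ingredients (convex-order preservation of $\rho$-th and exponential moments, and the quantitative Wasserstein law of large numbers of Fournier and Guillin) have already been used in the two-marginal case immediately above. The only mildly delicate point is the concluding monotonicity step that uniformizes the individual rates into a single rate governed by $\min_k I_k$.
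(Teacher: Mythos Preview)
Your proposal is correct and follows essentially the same approach as the paper: the paper derives the inductive bound $W_\rho(\mu^k,\mu^{k,\rho}_{I_k,\hdots,I_{\ell}})\le 2 \sum_{k'=k}^{\ell-1}W_\rho(\mu^{k'},\mu^{k'}_{I_{k'}})+ W_\rho(\mu^{\ell},\mu^{\ell}_{I_{\ell}})$ in the discussion preceding the proposition and then relies on exactly the ingredients you list (SLLN with Proposition~7.1.5 of~\cite{AGS} for convergence, convexity of $x\mapsto e^{\gamma|x|^\alpha}$ for propagation of the exponential moment, and Theorem~2 of~\cite{FoGu} with Borel--Cantelli for the rate). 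Your explicit mention of the monotonicity of $t\mapsto \log t/t$ to uniformize the individual rates is a detail the paper leaves implicit.
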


\begin{proof}[Proof of Proposition~\ref{prop_cv_curlywedge}]
  We consider $\rho>1$. Let $Q^\rho_{\mu_I}$ (resp. $Q^\rho_\nu$) be a Markov kernel such that $\mu_I(dx)Q^\rho_{\mu_I}(x,dy)$ (resp. $\nu(dx)Q^\rho_\nu(x,dy)$) is an optimal transport plan for $W_\rho(\mu_I,\mu)$  (resp. $W_\rho(\nu,\nu_J)$). Let $R(x,dy)$ be a martingale kernel such that $\nu=\mu R$. We observe that $Q^\rho_{\mu_I} R Q^\rho_\nu$ is a Markov kernel such that $\mu_IQ^\rho_{\mu_I} R Q^\rho_\nu=\mu R Q^\rho_\nu=\nu Q^\rho_\nu=\nu_J$. By Theorem~\ref{lem_couplage_Rd_gen}, then using the martingale property of $R$, the Jensen and Minkowski inequalities, we get
  \begin{align*}
   & W_\rho(\mu_I, (\mu_I)^\rho_{\underline{\mathcal P}(\nu_J)}) \le \mathcal{J}_\rho^{1/\rho} (Q^\rho_{\mu_I} R Q^\rho_\nu) \\
    &=\left(\int_{\R^d}\left| \int_{\R^d\times\R^d\times\R^d}(x-w+w-y)Q^\rho_{\mu_I}(x,dw) R(w,dz)Q^\rho_\nu(z,dy)\right|^\rho\mu_I(dx)\right)^{1/\rho}
    \\&=\left(\int_{\R^d}\left| \int_{\R^d\times\R^d\times\R^d}(x-w+z-y)Q^\rho_{\mu_I}(x,dw) R(w,dz)Q^\rho_\nu(z,dy)\right|^\rho\mu_I(dx)\right)^{1/\rho}
    \\
    &\le\left(\int_{\R^d\times\R^d\times\R^d\times\R^d}\left|x-w+z-y\right|^\rho Q^\rho_{\mu_I}(x,dw) R(w,dz)Q^\rho_\nu(z,dy)\mu_I(dx)\right)^{1/\rho}
  \\
  &\le \left(\int_{\R^d\times\R^d}|x-w|^\rho Q^\rho_{\mu_I}(x,dw)\mu_I(dx)\right)^{1/\rho}+\left(\int_{\R^d\times\R^d}|z-y|^\rho \nu(dz)Q^\rho_\nu(z,dy)\right)^{1/\rho} \\
&= W_\rho(\mu_I,\mu)+W_\rho(\nu_J,\nu).
  \end{align*}
The claim follows since $W_\rho(\mu, (\mu_I)^\rho_{\underline{\mathcal P}(\nu_J)})\le W_\rho(\mu, \mu_I)+W_\rho(\mu_I, (\mu_I)^\rho_{\underline{\mathcal P}(\nu_J)})$.  
\end{proof}

\section{Wasserstein projection of $\nu$ on the set of probability measures larger than $\mu$ in the convex order}\label{wasprojnu}

Let $\mu,\nu\in \cP_\rho(\R^d)$. We have just presented  a construction of a measure $\mu^\rho_{\underline{\mathcal P}(\nu)}$ such that $\mu^\rho_{\underline{\mathcal P}(\nu)} \lecx \nu$. Then,  a natural question is: can we construct similarly a measure $\nu^\rho_{\bar{\mathcal P}(\mu)}$ such that $\mu \lecx \nu^\rho_{\bar{\mathcal P}(\mu)}  $? Let us start again with two empirical measures $\mu^I=\frac 1I \sum_{i=1}^I \delta_{X_i}$ and $\nu^J=\frac 1J \sum_{j=1}^J \delta_{Y_j}$. A natural construction would be to take  $(\nu_J)^\rho_{\bar{\mathcal P}(\mu_I)} =\frac 1J \sum_{j=1}^J \delta_{\tilde{Y}_j}$, where $(\tilde{Y}_j,j=1,\dots,J)\in (\R^d)^J$ minimizes $\sum_{j=1}^J|\tilde{Y}_j-Y_j|^\rho$ under the constraint $\mu^I \lecx \frac 1J \sum_{j=1}^J \delta_{\tilde{Y}_j}$ (this constraint can always be satisfied  when $J=I$ by taking $\tilde{Y}_j=X_j$ for $j=1,\dots,J$ or when $J\ge d+1$ by taking $\tilde{Y}_j,\ j=1,\dots,d+1$ as the images of the vertices of the canonical simplex by some similarity transformation). The analogous construction for $\nu^\rho_{\bar{\mathcal P}(\mu)} $ would be then to take  $\nu^\rho_{\bar{\mathcal P}(\mu)} =T \# \nu$, where $T:\R^d\rightarrow \R^d$ is a measurable map that minimizes $\int_{\R^d}|y-T(y)|^\rho\nu(dy)$, under the constraint $\mu \lecx T \# \nu$. More generally, we define
  $$\nu^\rho_{\bar{\mathcal P}(\mu)}:= \arg \min_{\eta\in\bar{\mathcal P}(\mu)}W_\rho(\nu,\eta)\mbox{ where }\bar{\mathcal P}(\mu)=\{\eta\in{\mathcal P}(\R^d):\mu\lecx\eta\}.$$
Let us now assume that $\rho>1$. The latter problem coincides with the former one when $\nu$ is absolutely continuous with respect to the Lebesgue measure (i.e. $\nu(A)=0$ for any Borel set~$A$ with zero Lebesgue measure), since we know in this case that the optimal coupling for the Wasserstein distance $W_\rho$ is given by a transport map, see e.g. Theorem~6.2.4 in~\cite{AGS}. We now check that it is well defined. Let $(\eta_n)_{n\ge 1}\in (\cP_\rho(\R^d))^\N$ be such that $\eta_n\in\bar{\mathcal P}(\mu)$ and $W_\rho(\nu,\eta_n)\underset{n\rightarrow +\infty}{\rightarrow}\inf_{\eta\in\bar{\mathcal P}(\mu)}W_\rho(\nu,\eta)$. Let $\pi_n\in\Pi(\nu,\eta_n)$ denote an optimal transport plan between $\nu$ and $\eta_n$ for $W_\rho$. We have $\left(\int |x|^\rho\eta_n(x)\right)^{1/\rho}=W_{\rho}(\eta_n,\delta_0)\le W_{\rho}(\eta_n,\nu)+ W_{\rho}(\nu,\delta_0)$: the boundedness of the moments ensures that there is a subsequence such that $\pi_{\varphi(n)}$ and $\eta_{\varphi(n)}$ weakly converges to~$\pi_\infty$ and $\eta_\infty$. This gives $\inf_{\eta\in\bar{\mathcal P}(\mu)}W^\rho_\rho(\nu,\eta)\ge \lim_{n\rightarrow+\infty} \int (|x-y|^\rho\wedge K) \pi_{\varphi(n)}(dx,dy)=\int (|x-y|^\rho\wedge K) \pi_{\infty}(dx,dy)$ for any $K>0$. By monotone convergence, we deduce that $ \inf_{\eta\in\bar{\mathcal P}(\mu)}W_\rho(\nu,\eta)\ge \int |x-y|^\rho \pi_{\infty}(dx,dy)$. Clearly, $\pi_\infty$ is a coupling between $\nu$ et $\eta_\infty$. Besides, from the uniform integrability given by the bounds on the $\rho$-th moment, we get that for any convex function $\phi:\R^d\rightarrow \R^d$ such that $\sup_{x\in\R^d}\frac{|\phi(x)|}{1+|x|}<\infty$, $\int \phi(x)\mu(dx) \le \int \phi(x)\eta_{\varphi(n)}(dx) \underset{n\rightarrow+\infty}{\rightarrow}  \int \phi(x)\eta_{\infty}(dx)$. Therefore, by Lemma~\ref{lem_ordre_cvx} below, $\eta_\infty\in\bar{\mathcal P}(\mu)$, which shows the existence of a minimum. When $\nu$ is absolutely continuous with respect to the Lebesgue measure, we can show that this minimum is unique. Let us consider $\eta_1,\eta_2\in\bar{\mathcal P}(\mu)$ such that $W_\rho(\nu,\eta_1)=W_\rho(\nu,\eta_2)=\inf_{\eta\in\bar{\mathcal P}(\mu)}W_\rho(\nu,\eta)$. One has $\frac12 \left( \eta_1+ \eta_2\right)\in\bar{\mathcal P}(\mu)$, and, by Lemma~\ref{lem_wass_strict} below, we get $W_\rho\left(\nu, \frac12 \left( \eta_1+ \eta_2\right)\right)\le\inf_{\eta\in\bar{\mathcal P}(\mu)}W_\rho(\nu,\eta)$ and $\eta_1=\eta_2$ since the inequality is necessarily an equality. In dimension $1$, uniqueness still holds without any assumption on $\nu$. Indeed, by \eqref{caraccxquant}, the probability measure $\bar{\eta}_{12}$ defined by $F_{\bar{\eta}_{12}}^{-1}=\frac{1}{2}(F_{\eta_1}^{-1}+F_{\eta_2}^{-1})$ is such that $\mu\lecx\bar{\eta}_{12}$. Again by Lemma~\ref{lem_wass_strict},  $W_\rho\left(\nu, \bar{\eta}_{12}\right)\le\inf_{\eta\in\bar{\mathcal P}(\mu)}W_\rho(\nu,\eta)$ and $\eta_1=\eta_2$ since the inequality is necessarily an equality.
In dimension $d=1$, if $\mu,\nu\in{\mathcal P}_1(\R)$, let $$\tilde{\mathcal P}(\mu):=\{\eta\in\bar{\mathcal P}(\mu)\cap{\mathcal P}_1(\R):(0,1)\ni p\mapsto F_\eta^{-1}(p)-F_\nu^{-1}(p)\mbox{ is non-decreasing}\}.$$ 
Let $\tilde \psi$ denote the concave hull (smallest concave function larger than) of the function $q\mapsto \int_q^1F_\mu^{-1}(p)-F_\nu^{-1}(p)dp$. There is a probability measure $\nu_{\bar{\mathcal P}(\mu)}$ such that
$\int_q^1F_{\nu_{\bar{\mathcal P}(\mu)}}^{-1}(p) dp=\tilde \psi(q)+\int_q^1F_\nu^{-1}(p)dp$. Moreover, $\nu_{\bar{\mathcal P}(\mu)}\in\tilde{\mathcal P}(\mu)$. For $\eta\in \tilde{\mathcal P}(\mu)$, let $D(\eta)$ denote the distribution of $F_\eta^{-1}(U)-F_\nu^{-1}(U)$ for $U$ uniformly distributed on $(0,1)$. By Lemma~\ref{leminf} below, the set $\{D(\eta):\eta\in \tilde{\mathcal P}(\mu)\}$ admits an infimum $\pi$ for the convex order and for all $q\in[0,1]$, $\int_q^1F_\pi^{-1}(p)dp=\inf_{\eta\in \tilde{\mathcal P}(\mu)}\int_q^1F_{D(\eta)}^{-1}(p)$. For $\eta\in \tilde{\mathcal P}(\mu)$, one has $F_{D(\eta)}^{-1}=F_\eta^{-1}-F_\nu^{-1}$ by Lemma~\ref{lemquant} below. With the fact that $\eta\in \tilde{\mathcal P}(\mu)$ if and only if $\int_q^1F_\eta^{-1}(p)dp\ge\int_q^1F_\mu^{-1}(p)dp$ for all $q\in(0,1)$ with equality for $q=0$ and $[0,1]\ni q\mapsto \int_q^1F_\eta^{-1}(p)-F_\nu^{-1}(p)dp$ is concave, one deduces that for $q\in(0,1)$,
\begin{align*}
   \int_q^1F_\pi^{-1}(p)dp=\inf_{\eta\in \tilde{\mathcal P}(\mu)}\int_q^1F_\eta^{-1}(p)-F_\nu^{-1}(p)dp=\tilde \psi(q)=\int_q^1F_{\nu_{\bar{\mathcal P}(\mu)}}^{-1}(p)-F_\nu^{-1}(p)dp.
\end{align*}
Hence $\pi=D(\nu_{\bar{\mathcal P}(\mu)})$. If $\mu,\nu\in{\mathcal P}_\rho(\R)$ for some $\rho>1$, then
\begin{align*}
   W_\rho^\rho(\nu,\nu_{\bar{\mathcal P}(\mu)})&=\E\left[|F_{\nu_{\bar{\mathcal P}(\mu)}}^{-1}(U)-F_\nu^{-1}(U)
|^\rho\right]=\int_\R|x|^\rho\pi(dx)\\&\le \inf_{\eta\in\tilde{\mathcal P}(\mu)}\E\left[|F_{\eta}^{-1}(U)-F_\nu^{-1}(U)|^\rho\right]=\inf_{\eta\in\tilde{\mathcal P}(\mu)}W_\rho^\rho(\nu,\eta).\end{align*}

By Lemma~\ref{lemcrois} below, $\inf_{\eta\in\tilde{\mathcal P}(\mu)}W_\rho(\nu,\eta)=\inf_{\eta\in\bar{\mathcal P}(\mu)}W_\rho(\nu,\eta)$. Therefore $W_\rho(\nu,\nu_{\bar{\mathcal P}(\mu)})=\inf_{\eta\in\bar{\mathcal P}(\mu)}W_\rho(\nu,\eta)$ and $\nu^\rho_{\bar{\mathcal P}(\mu)}=\nu_{\bar{\mathcal P}(\mu)}$.

For probability measures $\mu_I=\sum_{i=1}^Ip_i\delta_{x_i}$ (resp. $\nu_J=\sum_{j=1}^Jq_j\delta_{y_j}$) on the real line with $(p_1,\hdots,p_I)\in(0,1]^I$ and $x_1<x_2<\hdots<x_I$ (resp. $(q_1,\hdots,q_J)\in(0,1]^J$ and $y_1<y_2<\hdots<y_J$), $\tilde\psi$ is equal to $\int_0^1F_{\mu_I}^{-1}(p)-F_{\nu_J}^{-1}(p)dp$ minus the convex hull $\psi$ of $q\mapsto \int_0^qF_{\mu_I}^{-1}(p)-F_{\nu_J}^{-1}(p)dp$ which has already been discussed after Theorem~\ref{propprojd1} and can be computed by Andrew's monotone chain algorithm. One then may compute the probability measure $(\nu_J)_{\bar{\mathcal P}(\mu_I)}$ which writes $\sum_{k=1}^K r_k\delta_{z_k}$ with $K\le I+J$, $z_1\le z_2\le\hdots\le z_K$ and $(r_k)_{1\le k\le K}$ denoting the differences between the successive elements of the increasing reordering of $\{0\}\cup\{\sum_{k=1}^ip_k:1\le i\le I\}\cup\{\sum_{k=1}^jq_k:1\le j\le J\}$.

Let $\mu,\nu\in \cP_{\rho}(\R^d)$ such that $\mu\lecx \nu$ and $\mu_I,\nu_J\in \cP_{\rho}(\R^d)$ be arbitrary approximations of $\mu$ and $\nu$.  The probability measure $(\nu_J)_{\bar{\mathcal P}(\mu_I)}^\rho$ (or any minimizing probability measure when uniqueness is not shown) satisfies
\begin{equation}\label{cv_curlyvee}
  W_\rho((\nu_J)_{\bar{\mathcal P}(\mu_I)}^\rho,\nu)\le W_\rho(\mu,\mu_I)+2W_\rho(\nu,\nu_J)
\end{equation}
  We proceed like in the proof of Proposition~\ref{prop_cv_curlywedge}. Let $Q^\rho_{\mu_I}$ (resp. $Q^\rho_\nu$) be a Markov kernel such that $\mu_I(dx)Q^\rho_{\mu_I}(x,dy)$ (resp. $\nu(dx)Q^\rho_\nu(x,dy)$) is an optimal transport plan for $W_\rho(\mu_I,\mu)$  (resp. $W_\rho(\nu,\nu_J)$) and $R$ be a martingale kernel such that $\mu R=\nu$. We obviously have  $\nu_J=\mu_IQ^\rho_{\mu_I} R Q^\rho_\nu$. 
By Jensen inequality and using the martingale property of~$R$, we have $\mu_I \lecx ((x,w,z)\mapsto x+z-w) \# \mu_I(dx)Q^\rho_{\mu_I}(x,dw)R(w,dz)$, so that $$\inf_{\eta\in\bar{\mathcal P}(\mu_I)}W_\rho(\nu_J,\eta)\le \left(\int_{(\R^d)^4} |x+z-w-y|^\rho \mu_I(dx)Q^\rho_{\mu_I}(x,dw)R(w,dz)Q^\rho_\nu(z,dy)\right)^{1/\rho}.$$
We get~\eqref{cv_curlyvee} using Minkowski's inequality and the triangle inequality $W_\rho((\nu_J)_{\bar{\mathcal P}(\mu_I)}^\rho,\nu)\le W_\rho((\nu_J)_{\bar{\mathcal P}(\mu_I)}^\rho,\nu_J)+W_\rho(\nu,\nu_J)$. In the multi-marginal case, defining inductively $\mu^{1,\rho}_{I_1}=\mu^1_{I_1}$ and for $k\in\{2,\hdots,\ell\}$, $\mu^{k,\rho}_{I_1,\hdots,I_k}$ as the $W_\rho$ projection of $\mu^k_{I_k}$ on $\bar{\mathcal P}(\mu^{k-1,\rho}_{I_1,\hdots,I_{k-1}})$, we deduce that for $k\in\{2,\hdots,\ell\}$, 
$$W_\rho(\mu^k,\mu^{k,\rho}_{I_1,\hdots,I_k})\le W_\rho(\mu^1,\mu^1_{I_1})+2 \sum_{k'=2}^{k}W_\rho(\mu^{k'},\mu^{k'}_{I_{k'}}).$$

Despite all these interesting properties that we summarize in the next proposition, the measure(s) $\nu^\rho_{\bar{\mathcal P}(\mu)}$ do(es) not seem easy to be calculated numerically, even for $\rho=2$. In fact, the constraint of the convex order is not simple to handle in a minimization program. More precisely, in the case of empirical measures, one would have to minimize $\sum_{j=1}^J|\tilde{Y}_j-Y_j|^2$ under the constraint $ \frac 1I \sum_{i=1}^I \delta_{X_i} \lecx \frac 1J \sum_{j=1}^J \delta_{\tilde{Y}_j}$. Even in dimension~1, this constraint is not linear since it is equivalent to $\max_i X_i \le \max_j \tilde{Y}_j$, $\min_i X_i \le \min_j \tilde{Y}_j$, $\frac 1I \sum_{i=1}^I  X_i =\frac 1J \sum_{j=1}^J \tilde{Y}_j$, and $\frac 1I \sum_{i=1}^I  (X_i-\tilde{Y}_{j'})^+ \le \frac 1J \sum_{j=1}^J (\tilde{Y}_j-\tilde{Y}_{j'})^+$ for any $1\le j'\le J$, see e.g. Corollary 2.2 in~\cite{ACJ1}. This is why we mostly focus on  $\mu^2_{\underline{\mathcal P}(\nu)}$ that leads to a clear implementation of a quadratic problem with linear constraints. 

\begin{theorem}\label{prop_curlyvee}
     For $\rho>1$, if $\mu,\nu \in \mathcal{P}_\rho(\R^d)$, then $\inf_{\eta\in\bar{\mathcal P}(\mu)}W_\rho^\rho(\nu,\eta)$ is attained by some probability measure $\nu^\rho_{\bar{\mathcal P}(\mu)}$ which is unique when $\nu$ is absolutely continuous with respect to the Lebesgue measure or $d=1$. If $\mu,\nu\in{\mathcal P}_1(\R)$, then there is a probability $\nu_{\bar{\mathcal P}(\mu)}$ such that for all $q\in[0,1]$,
$\int_q^1F_{\nu_{\bar{\mathcal P}(\mu)}}^{-1}(p) dp=\tilde\psi(q)+\int_q^1F_\nu^{-1}(p)dp$ where $\tilde \psi$ denotes the concave hull of the function $q\mapsto \int_q^1F_\mu^{-1}(p)-F_\nu^{-1}(p)dp$. Moreover, $\nu^\rho_{\bar{\mathcal P}(\mu)}=\nu_{\bar{\mathcal P}(\mu)}$ for each $\rho>1$ such that $\mu,\nu\in{\mathcal P}_\rho(\R)$.
Last, if $\rho>1$ and $\mu,\nu,\mu_I,\nu_J\in \mathcal{P}_\rho(\R^d)$, then 
  $\mu\lecx\nu\Rightarrow W_\rho((\nu_J)^\rho_{\bar{\mathcal P}(\mu_I)},\nu) \le W_\rho(\mu,\mu_I)+2W_\rho(\nu,\nu_J).$
\end{theorem}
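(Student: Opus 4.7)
The plan is to address the four claims of the theorem in turn, largely by organizing the arguments that appear in the discussion preceding the statement into a coherent proof.

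For the existence of a minimizer, I would start with a minimizing sequence $(\eta_n)\subset\bar{\mathcal P}(\mu)$ and associated optimal couplings $\pi_n\in\Pi(\nu,\eta_n)$. The bound $W_\rho(\eta_n,\delta_0)\le W_\rho(\nu,\delta_0)+W_\rho(\nu,\eta_n)$ yields uniform control of the $\rho$-th moments of the $\eta_n$, hence joint tightness of $(\pi_n)$ by Prokhorov. Extracting a weak limit $\pi_\infty$ with second marginal $\eta_\infty$, I would obtain lower semicontinuity of the cost by passing to the limit in $\int(|x-y|^\rho\wedge K)\,d\pi_n$ and then letting $K\to\infty$ via monotone convergence. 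The uniform integrability coming from the $\rho$-th moment bound allows passage to the limit in $\int\phi\,d\mu\le\int\phi\,d\eta_n$ for convex $\phi$ with sublinear growth, and Lemma~\ref{lem_ordre_cvx} then gives $\eta_\infty\in\bar{\mathcal P}(\mu)$, completing existence.

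For uniqueness in the two cases, I would use convexity of $\bar{\mathcal P}(\mu)$: given minimizers $\eta_1,\eta_2$, the measure $\frac{1}{2}(\eta_1+\eta_2)$ still lies in $\bar{\mathcal P}(\mu)$. When $\nu\ll\text{Leb}$, strict convexity of $\eta\mapsto W_\rho^\rho(\nu,\eta)$ provided by Lemma~\ref{lem_wass_strict} forces $\eta_1=\eta_2$. In dimension one I would substitute the linearly-averaged quantile construction $F_{\bar\eta_{12}}^{-1}=\tfrac12(F_{\eta_1}^{-1}+F_{\eta_2}^{-1})$, which belongs to $\bar{\mathcal P}(\mu)$ by \eqref{caraccxquant}, so the same strict convexity argument applies without any regularity assumption on $\nu$.

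For the one-dimensional characterization, I would define $\nu_{\bar{\mathcal P}(\mu)}$ from the integrated quantile identity and first check that this produces a bona fide probability measure: since $\tilde\psi$ is the concave hull of a function vanishing at $0$ and equal to $\int_0^1(F_\mu^{-1}-F_\nu^{-1})\,dp$ at $0$, one has $\tilde\psi(1)=0$ and $\tilde\psi(0)=\int_0^1(F_\mu^{-1}-F_\nu^{-1})$, and concavity of $\tilde\psi$ combined with Lemma~\ref{lemquant} ensures that $-\tilde\psi'(\cdot-)+F_\nu^{-1}$ is a valid quantile function. The inequalities $\tilde\psi(q)\ge \int_q^1(F_\mu^{-1}-F_\nu^{-1})\,dp$ give $\nu_{\bar{\mathcal P}(\mu)}\in\tilde{\mathcal P}(\mu)\subset\bar{\mathcal P}(\mu)$ via \eqref{caraccxquant}. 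Using Lemma~\ref{leminf} to construct the convex-order infimum $\pi$ of $\{D(\eta):\eta\in\tilde{\mathcal P}(\mu)\}$ and identifying $\pi=D(\nu_{\bar{\mathcal P}(\mu)})$ through the definition of $\tilde\psi$, Jensen's inequality applied to $|\cdot|^\rho$ shows that $\nu_{\bar{\mathcal P}(\mu)}$ minimizes $W_\rho$ over $\tilde{\mathcal P}(\mu)$; Lemma~\ref{lemcrois} finally extends the minimum from $\tilde{\mathcal P}(\mu)$ to $\bar{\mathcal P}(\mu)$, yielding the $\rho$-independence.

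For the stability bound I would mirror the argument from Proposition~\ref{prop_cv_curlywedge}. Taking optimal transport kernels $Q^\rho_{\mu_I}$ and $Q^\rho_\nu$ realizing $W_\rho(\mu_I,\mu)$ and $W_\rho(\nu,\nu_J)$, and a martingale kernel $R$ from $\mu$ to $\nu$, the composition $\mu_I Q^\rho_{\mu_I} R Q^\rho_\nu=\nu_J$. Pushing forward $\mu_I(dx)Q^\rho_{\mu_I}(x,dw)R(w,dz)$ by $(x,w,z)\mapsto x+z-w$ produces (by Jensen and the martingale property of $R$) a measure $\eta\in\bar{\mathcal P}(\mu_I)$, and the four-fold integral against $|x+z-w-y|^\rho$ is controlled by Minkowski to bound $W_\rho((\nu_J)^\rho_{\bar{\mathcal P}(\mu_I)},\nu_J)$ by $W_\rho(\mu,\mu_I)+W_\rho(\nu,\nu_J)$; a final triangle inequality yields the claim. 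The main obstacle lies in the one-dimensional characterization, where identifying the right object through $\tilde\psi$ and then justifying the passage between $\tilde{\mathcal P}(\mu)$ and the larger set $\bar{\mathcal P}(\mu)$ through Lemma~\ref{lemcrois} requires the most delicate quantile-function bookkeeping.
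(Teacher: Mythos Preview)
Your proposal is correct and follows essentially the same route as the paper: the proof of Theorem~\ref{prop_curlyvee} is really the discussion in Section~\ref{wasprojnu} preceding the statement, and you have faithfully reorganized that material (existence via tightness and lower semicontinuity, uniqueness via Lemma~\ref{lem_wass_strict}, the one-dimensional characterization via $\tilde\psi$, Lemma~\ref{leminf} and Lemma~\ref{lemcrois}, and the stability bound via the kernel composition $Q^\rho_{\mu_I}RQ^\rho_\nu$). One small slip: the function $q\mapsto\int_q^1(F_\mu^{-1}-F_\nu^{-1})\,dp$ vanishes at $q=1$, not at $q=0$; your conclusion about $\tilde\psi(0)$ and $\tilde\psi(1)$ is nonetheless correct.
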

Comparing $W_\rho(\nu^\rho_{\bar{\mathcal P}(\mu)},\nu)$ and $ W_\rho(\mu,\mu^\rho_{\underline{\mathcal P}(\nu)})$ leads to interesting properties.
\begin{corollary}\label{cor_equality_wass} For $\rho>1$, $\mu,\nu\in\mathcal{P}_\rho(\R^d)$, we have
  $$ W_\rho(\nu^\rho_{\bar{\mathcal P}(\mu)},\nu) = W_\rho(\mu,\mu^\rho_{\underline{\mathcal P}(\nu)}) $$
and there is a measurable transport map $T:\R^d\to\R^d$ such that the only optimal transport plan between $\nu^\rho_{\bar{\mathcal P}(\mu)}$ and $\nu$ is $\nu^\rho_{\bar{\mathcal P}(\mu)}(dz)\delta_{T(z)}(dy)$. Moreover, for any martingale kernel $R$ such that $\mu R = \nu^\rho_{\bar{\mathcal P}(\mu)}$, $\mu(dx)R(x,dz)$ a.e., $T(z)-z=\int_{\R^d}T(z)R(x,dz)-x$. Last, in dimension~$d=1$, when  $\mu,\nu\in\mathcal{P}_1(\R)$, we also have for all $\rho\ge 1$, $ W_\rho(\nu_{\bar{\mathcal P}(\mu)},\nu) = W_\rho(\mu,\mu_{\underline{\mathcal P}(\nu)})=\left(\int_0^1|\psi'(u-)|^\rho du \right)^{1/\rho}$ where $\psi'(u-)$ is the left-hand derivative of the convex hull $\psi$ of the function $[0,1]\ni q \mapsto \int_0^q F^{-1}_{\mu}(p)-F^{-1}_{\nu}(p) dp$. 
\end{corollary}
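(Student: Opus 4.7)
The plan is to prove the two inequalities between $W_\rho(\nu^\rho_{\bar{\mathcal P}(\mu)},\nu)$ and $W_\rho(\mu,\mu^\rho_{\underline{\mathcal P}(\nu)})$ in succession, and then to exploit the equality cases of the Jensen inequalities that arise to extract the transport map $T$ and the martingale identity. The main obstacle I anticipate is the clause \emph{``for any martingale kernel $R$ with $\mu R=\nu^\rho_{\bar{\mathcal P}(\mu)}$''}, which forces one to rerun the extraction argument with an arbitrary $R$ rather than with a single convenient Strassen kernel.

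\emph{Upper bound.} To bound $W_\rho(\nu^\rho_{\bar{\mathcal P}(\mu)},\nu)$ from above, I would exhibit an explicit element of $\bar{\mathcal P}(\mu)$ close to $\nu$. Let $R_\star$ be a minimizer from Theorem~\ref{lem_couplage_Rd_gen}, so $\mu R_\star=\nu$ and $m_{R_\star}\#\mu=\mu^\rho_{\underline{\mathcal P}(\nu)}$, and let $\eta$ be the image of $\mu(dx)R_\star(x,dy)$ under $(x,y)\mapsto y-m_{R_\star}(x)+x$. Since $R_\star(x,\cdot)$ has mean $m_{R_\star}(x)$, Jensen's inequality gives $\int\phi(y-m_{R_\star}(x)+x)R_\star(x,dy)\ge\phi(x)$ for every convex $\phi$, whence $\mu\lecx\eta$ by Lemma~\ref{lem_ordre_cvx}, so $\eta\in\bar{\mathcal P}(\mu)$. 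The joint law of $(Y,Y-m_{R_\star}(X)+X)$ under $(X,Y)\sim\mu(dx)R_\star(x,dy)$ is a coupling of $\nu$ and $\eta$ whose cost equals $\int|m_{R_\star}(x)-x|^\rho\mu(dx)=W_\rho^\rho(\mu,\mu^\rho_{\underline{\mathcal P}(\nu)})$, so $W_\rho(\nu^\rho_{\bar{\mathcal P}(\mu)},\nu)\le W_\rho(\nu,\eta)\le W_\rho(\mu,\mu^\rho_{\underline{\mathcal P}(\nu)})$.

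\emph{Lower bound, transport map, martingale identity, uniqueness.} Set $\eta_\star=\nu^\rho_{\bar{\mathcal P}(\mu)}$, pick any optimal plan $\eta_\star(dz)S(z,dy)\in\Pi(\eta_\star,\nu)$ and any martingale kernel $P$ with $\mu P=\eta_\star$ (existing by Strassen). The composition $R:=P\circ S$ satisfies $\mu R=\nu$, and Jensen applied first to $P(x,\cdot)$ (using its martingale property $x=\int zP(x,dz)$) and then to $S(z,\cdot)$ yields
\[\int_{\R^d}|x-m_R(x)|^\rho\mu(dx)\le\int_{(\R^d)^3}|z-y|^\rho S(z,dy)P(x,dz)\mu(dx)=W_\rho^\rho(\eta_\star,\nu).\]
Theorem~\ref{lem_couplage_Rd_gen} bounds the left-hand side from below by $W_\rho^\rho(\mu,\mu^\rho_{\underline{\mathcal P}(\nu)})$, which combined with the upper bound above forces equalities throughout. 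Strict convexity of $|\cdot|^\rho$ for $\rho>1$ then gives $S(z,\cdot)=\delta_{T(z)}$ for some measurable $T$, and forces $z-T(z)$ to be $P(x,\cdot)$-a.s.\ equal to $x-m_R(x)$, i.e.\ $T(z)-z=\int T(w)P(x,dw)-x$. To extend this identity to an arbitrary martingale kernel $R_m$ with $\mu R_m=\eta_\star$, I would repeat the chain-Jensen computation with $R_m\circ S$ in place of $R$: the upper bound remains $W_\rho^\rho(\eta_\star,\nu)$, the lower bound is unchanged, equality is forced, and the same pointwise conclusion applies to $R_m$. Uniqueness of the optimal plan is the standard convex-combination argument: two optimal plans $\eta_\star(dz)\delta_{T_i(z)}$ have midpoint $\eta_\star(dz)\tfrac12(\delta_{T_1(z)}+\delta_{T_2(z)})$, which by the same strict convexity must again be supported on a graph, forcing $T_1=T_2$ $\eta_\star$-a.e.

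\emph{Dimension one.} I would use the explicit quantile expressions. By Theorem~\ref{propprojd1}, $F_{\mu_{\underline{\mathcal P}(\nu)}}^{-1}(p)=F_\mu^{-1}(p)-\psi'(p-)$. From Theorem~\ref{prop_curlyvee}, differentiating $\int_q^1F_{\nu_{\bar{\mathcal P}(\mu)}}^{-1}=\tilde\psi(q)+\int_q^1F_\nu^{-1}$ gives $F_{\nu_{\bar{\mathcal P}(\mu)}}^{-1}(q)-F_\nu^{-1}(q)=-\tilde\psi'(q)=\psi'(q)$, using $\tilde\psi(q)=\psi(1)-\psi(q)$ from the discussion preceding that theorem together with $\psi(1)=\int_0^1F_\mu^{-1}-F_\nu^{-1}$. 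Applying the one-dimensional Wasserstein identity $W_\rho^\rho(\alpha,\beta)=\int_0^1|F_\alpha^{-1}(p)-F_\beta^{-1}(p)|^\rho dp$ (Proposition~2.17 in~\cite{santambrogio}, valid for every $\rho\ge 1$) to both $(\mu,\mu_{\underline{\mathcal P}(\nu)})$ and $(\nu,\nu_{\bar{\mathcal P}(\mu)})$, both distances equal $\int_0^1|\psi'(p-)|^\rho dp$, using that $\psi'(p)=\psi'(p-)$ Lebesgue-a.e.; this establishes the displayed equality for all $\rho\ge 1$.
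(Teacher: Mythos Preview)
Your proof is correct and follows essentially the same approach as the paper: the paper obtains the two inequalities by specializing Proposition~\ref{prop_cv_curlywedge} and Theorem~\ref{prop_curlyvee} (whose proofs contain precisely your competitor construction $(x,y)\mapsto y-m_{R_\star}(x)+x$), then runs your Jensen chain once with an \emph{arbitrary} martingale kernel $R$ (so your anticipated ``rerun'' is absorbed into a single pass), and treats the one-dimensional case via the same quantile differentiation using \eqref{def_curlies}. The differences are purely organizational.
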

\begin{proof}
  Since $\mu\lecx\nu^\rho_{\bar{\mathcal P}(\mu)}$, we may replace $(\mu,\mu_I,\nu,\nu_J)$ by $(\mu,\mu,\nu^\rho_{\bar{\mathcal P}(\mu)},\nu)$ in Proposition~\ref{prop_cv_curlywedge} to get $W_\rho(\mu,\mu^\rho_{\underline{\mathcal P}(\nu)})\le W_\rho(\nu^\rho_{\bar{\mathcal P}(\mu)},\nu)$. Using that $\mu^\rho_{\underline{\mathcal P}(\nu)}\lecx\nu$ to replace $(\mu,\mu_I,\nu,\nu_J)$ by $(\mu^\rho_{\underline{\mathcal P}(\nu)},\mu,\nu,\nu)$ in Theorem~\ref{prop_curlyvee}, we obtain the converse inequality.

  Now, let $\rho>1$, $R$ denote a martingale kernel such that $\mu R = \nu^\rho_{\bar{\mathcal P}(\mu)}$ and $Q$ a Markov kernel such that $\nu^\rho_{\bar{\mathcal P}(\mu)}(dz)Q(z,dy)$ is  an optimal transport plan for $W_\rho(\nu^\rho_{\bar{\mathcal P}(\mu)},\nu)$. Repeating the arguments of Proposition~\ref{prop_cv_curlywedge} (replacing again $(\mu,\mu_I,\nu,\nu_J)$ by $(\mu,\mu,\nu^\rho_{\bar{\mathcal P}(\mu)},\nu)$), we get
  \begin{align*}
    &W^\rho_\rho(\mu,\mu^\rho_{\underline{\mathcal P}(\nu)})\le \int_{\R^d} \left| \int_{\R^d\times \R^d} (x-y) R(x,dz)Q(z,dy)\right|^\rho \mu(dx)\\&= \int_{\R^d} \left| \int_{\R^d\times \R^d} (z-y) R(x,dz)Q(z,dy)\right|^\rho \mu(dx)\le \int_{\R^d}\left| \int_{\R^d} (z-y) Q(z,dy)\right|^\rho \nu^\rho_{\bar{\mathcal P}(\mu)}(dz)\\
    & \le 
 \int_{\R^d\times \R^d} |z-y|^\rho  \nu^\rho_{\bar{\mathcal P}(\mu)}(dz)Q(z,dy) =W^\rho_\rho(\nu^\rho_{\bar{\mathcal P}(\mu)},\nu)=W^\rho_\rho(\mu,\mu^\rho_{\underline{\mathcal P}(\nu)}).
  \end{align*}
The equality in the last inequality ensures that  $\nu^\rho_{\bar{\mathcal P}(\mu)}(dz)$ a.e., $Q(z,dy)=\delta_{T(z)}(dy)$ where $T(z)=\int_{\R^d}yQ(z,dy)$. Moreover, the equality in the second inequality implies that $\mu(dx)R(x,dz)$ a.e., $T(z)-z=\int_{\R^d}T(z)R(x,dz)-x$.

If $\tilde Q$ is another Markov kernel such that $\nu^\rho_{\bar{\mathcal P}(\mu)}(dz)\tilde Q(z,dy)$ is  an optimal transport plan for $W_\rho(\nu^\rho_{\bar{\mathcal P}(\mu)},\nu)$, then $\nu^\rho_{\bar{\mathcal P}(\mu)}(dz)\frac{\tilde Q+Q}{2}(z,dy)$ is also an optimal transport plan and $\nu^\rho_{\bar{\mathcal P}(\mu)}(dz)$ a.e., $\frac{\tilde Q+Q}{2}(z,dy)$ is a Dirac mass so that $\tilde Q(z,dy)=Q(z,dy)$.

In dimension~$1$, we observe that
\begin{equation}\label{def_curlies}
\forall q\in [0,1],\ \int_0^qF_{\mu_{\underline{\mathcal P}(\nu)}}^{-1}(p)dp=\int_0^qF_{\mu}^{-1}(p)dp-\psi(q)\text{ and }\int_0^qF_{\nu_{\bar{\mathcal P}(\mu)}}^{-1}(p)dp=\int_0^q F_{\nu}^{-1}(p)dp +\psi(q).
\end{equation}
Thus, we have $F_{\mu_{\underline{\mathcal P}(\nu)}}^{-1}(p)-F_{\mu}^{-1}(p)=-\psi'(p-)$ and $F_{\nu_{\bar{\mathcal P}(\mu)}}^{-1}(p)-F_{\nu}^{-1}(p)=\psi'(p-)$ for $p\in (0,1)$, which gives the claim.
\end{proof}

The property $T(z)-z=\int_{\R^d}T(z)R(x,dz)-x$, $\mu(dx)R(x,dz)$ a.e., in Corollary~\ref{cor_equality_wass} indicates that in dimension~1, an optimal transport map $T$ between $\nu_{\bar{\mathcal P}(\mu)}$ and $\nu$ should be piecewise affine with slope~1 on the irreducible components of $(\mu, \nu_{\bar{\mathcal P}(\mu)})$ introduced in Theorem A.4~\cite{BeJu}, provided that we can find a martingale kernel~$R$ that spans the whole components. This is indeed the case according to the following proposition which moreover exhibits a common optimal transport map for $W_\rho(\nu_{\bar{\mathcal P}(\mu)},\nu)$ and $W_\rho(\mu ,\mu_{\underline{\mathcal P}(\nu)})$.
\begin{proposition} Let $\rho>1$, $\mu,\nu \in \mathcal{P}_\rho(\R)$. Let $(\ut_n,\ot_n)$, $1\le n \le  N$, (resp. $(\ut'_n,\ot'_n)$, $1\le n \le  N'$)  be the irreducible components of $(\mu, \nu_{\bar{\mathcal P}(\mu)})$ (resp. $(\mu_{\underline{\mathcal P}(\nu)},\nu)$). Then, we have $N=N'$ and $F_{\mu }(\ut_n)=F_{\mu_{\underline{\mathcal P}(\nu)}}(\ut'_n)$, $F_{\mu }(\ot_n-)=F_{\mu_{\underline{\mathcal P}(\nu)}}(\ot'_n-)$ up to a renumbering of $(t'_n)_{1\le n\le N }$.

  Let $\psi$  be the convex hull of the function $[0,1]\ni q \mapsto \int_0^q F^{-1}_{\mu}(p)-F^{-1}_{\nu}(p) dp$. Then, the function $T:\R\rightarrow \R$ defined by \begin{align*}&\forall x\notin \cup_{1\le n \le N} (\ut_n,\ot_n),\;T(x)=F_\nu^{-1}(F_{\nu_{\bar{\mathcal P}(\mu)}}(x))\mbox{ and }\\
&\forall 1\le n\le N,\;\forall x\in (\ut_n,\ot_n),\;T(x)=x-\frac{\psi(F_{\mu }(\ot_n-))-\psi(F_{\mu }(\ut_n))}{F_{\mu }(\ot_n-)-F_{\mu }(\ut_n)}\end{align*}
is an optimal transport map for $W_\rho(\nu_{\bar{\mathcal P}(\mu)},\nu)$ and $W_\rho(\mu ,\mu_{\underline{\mathcal P}(\nu)})$.
\end{proposition}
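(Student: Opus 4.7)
The whole proof hinges on the quantile-level identities from \eqref{def_curlies}, namely $F^{-1}_{\nu_{\bar{\mathcal P}(\mu)}}(p)=F^{-1}_\nu(p)+\psi'(p-)$ and $F^{-1}_{\mu_{\underline{\mathcal P}(\nu)}}(p)=F^{-1}_\mu(p)-\psi'(p-)$ for $p\in(0,1)$. Setting $\phi(q)=\int_0^q F^{-1}_\mu-F^{-1}_\nu\,dp$, the open set $G:=\{\psi<\phi\}\subset(0,1)$ decomposes as a disjoint union of maximal intervals $(a_n,b_n)$, $1\le n\le M$, on each of which $\psi$ is affine with slope $c_n=\frac{\psi(b_n)-\psi(a_n)}{b_n-a_n}=\psi'(p-)$. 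Consequently, on each gap $(a_n,b_n)$ one has $F^{-1}_{\nu_{\bar{\mathcal P}(\mu)}}(p)=F^{-1}_\nu(p)+c_n$ and $F^{-1}_{\mu_{\underline{\mathcal P}(\nu)}}(p)=F^{-1}_\mu(p)-c_n$, while on the contact set $[0,1]\setminus G$ one has $\psi'=\phi'$ a.e., so that $F^{-1}_{\nu_{\bar{\mathcal P}(\mu)}}=F^{-1}_\mu$ and $F^{-1}_{\mu_{\underline{\mathcal P}(\nu)}}=F^{-1}_\nu$ Lebesgue-a.e. there.

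\textbf{Step 1 (irreducible components via gaps of $\psi$).} I would show that the irreducible components of $(\mu,\nu_{\bar{\mathcal P}(\mu)})$ coincide with the intervals $(\ut_n,\ot_n):=(F^{-1}_\mu(a_n+),F^{-1}_\mu(b_n))$ for the non-degenerate gaps, using the Legendre-transform representation $u_\eta(x)=\sup_{p\in[0,1]}\bigl(xp-C_\eta(p)\bigr)$ with $C_\eta(p)=\int_0^p F^{-1}_\eta$. Indeed $C_\mu-C_{\nu_{\bar{\mathcal P}(\mu)}}=\phi-\psi\ge 0$ vanishes exactly on the contact set, so $u_\mu(x)=u_{\nu_{\bar{\mathcal P}(\mu)}}(x)$ iff some common subgradient $p^\star\in\partial C_\mu(p^\star)\cap\partial C_{\nu_{\bar{\mathcal P}(\mu)}}(p^\star)$ contains $x$ for some contact $p^\star$; a direct inspection of $\partial C_\mu(p)=[F^{-1}_\mu(p),F^{-1}_\mu(p+)]$ over the contact set shows that this equality holds precisely on the complement of $\bigcup_n(F^{-1}_\mu(a_n+),F^{-1}_\mu(b_n))$. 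The same argument applied to $(\mu_{\underline{\mathcal P}(\nu)},\nu)$ gives its irreducible components as $(\ut'_n,\ot'_n)=(F^{-1}_{\mu_{\underline{\mathcal P}(\nu)}}(a_n+),F^{-1}_{\mu_{\underline{\mathcal P}(\nu)}}(b_n))=(\ut_n-c_n,\ot_n-c_n)$ by the shift identity. Since any plateau of $F^{-1}_\mu$ inside $(a_n,b_n)$ is translated by $-c_n$ into a plateau of $F^{-1}_{\mu_{\underline{\mathcal P}(\nu)}}$ on the same Lebesgue-measure parameters, one gets both $N=N'$ and $F_\mu(\ut_n)=F_{\mu_{\underline{\mathcal P}(\nu)}}(\ut'_n)$, $F_\mu(\ot_n-)=F_{\mu_{\underline{\mathcal P}(\nu)}}(\ot'_n-)$.

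\textbf{Step 2 (push-forward).} On $\R\setminus\bigcup_n(\ut_n,\ot_n)$ the measures $\mu$ and $\nu_{\bar{\mathcal P}(\mu)}$ agree (a defining property of irreducible components), so $F_\mu=F_{\nu_{\bar{\mathcal P}(\mu)}}$ there and $T=F^{-1}_\nu\circ F_\mu$ is the classical monotone rearrangement; combined with the identity $F^{-1}_{\mu_{\underline{\mathcal P}(\nu)}}=F^{-1}_\nu$ on the contact set, this sends the restriction of $\mu$ (resp. $\nu_{\bar{\mathcal P}(\mu)}$) to the complement onto the restriction of $\mu_{\underline{\mathcal P}(\nu)}$ (resp. $\nu$) to $\R\setminus\bigcup_n(\ut'_n,\ot'_n)$. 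On each $(\ut_n,\ot_n)$, the translation $T(x)=x-c_n$ together with the gap identities $F^{-1}_{\mu_{\underline{\mathcal P}(\nu)}}=F^{-1}_\mu-c_n$ and $F^{-1}_\nu=F^{-1}_{\nu_{\bar{\mathcal P}(\mu)}}-c_n$ transfers $\mu|_{(\ut_n,\ot_n)}$ to $\mu_{\underline{\mathcal P}(\nu)}|_{(\ut'_n,\ot'_n)}$ and $\nu_{\bar{\mathcal P}(\mu)}|_{(\ut_n,\ot_n)}$ to $\nu|_{(\ut'_n,\ot'_n)}$. Summing the two contributions yields $T\#\mu=\mu_{\underline{\mathcal P}(\nu)}$ and $T\#\nu_{\bar{\mathcal P}(\mu)}=\nu$. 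Measurability and monotonicity of $T$ follow from the gap/contact decomposition and the fact that across each interval $(\ut_n,\ot_n)$, the translation by $-c_n$ matches continuously with the monotone rearrangement on the complementary side (by Step 1's endpoint identities).

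\textbf{Step 3 (optimality).} Changing variables $x=F^{-1}_\mu(p)$ and splitting the integral according to the contact set and the gaps,
$$\int_\R|T(x)-x|^\rho\mu(dx)=\int_{[0,1]\setminus G}|F^{-1}_\nu(p)-F^{-1}_\mu(p)|^\rho\,dp+\sum_n|c_n|^\rho(b_n-a_n)=\int_0^1|\psi'(p-)|^\rho\,dp,$$
which by Corollary~\ref{cor_equality_wass} equals $W_\rho^\rho(\mu,\mu_{\underline{\mathcal P}(\nu)})$, so $T$ is optimal for this distance. The analogous computation against $\nu_{\bar{\mathcal P}(\mu)}$ yields the same right-hand side $\int_0^1|\psi'(p-)|^\rho dp=W_\rho^\rho(\nu_{\bar{\mathcal P}(\mu)},\nu)$, establishing optimality on the other side as well. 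The main obstacle is Step~1: the Legendre-duality identification of the irreducible components with the gaps of $\psi$ and the careful bookkeeping of left-/right-limits and atoms at the endpoints $a_n,b_n$; once that correspondence is in hand, Steps~2 and~3 reduce to direct translations of the quantile identities.
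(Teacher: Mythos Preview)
Your approach is essentially the same as the paper's: both pivot on the quantile identities from \eqref{def_curlies}, decompose $[0,1]$ into the contact set $\{\psi=\phi\}$ and the gaps $(a_n,b_n)$ where $\psi$ is affine, and identify these gaps with the irreducible components via Legendre duality. The paper packages that last step into Lemma~\ref{prop_irred_comp2} (proved in the appendix by exactly the Legendre argument you sketch), then reads off the chain of equalities $\bigcup_n(F_\mu(\ut_n),F_\mu(\ot_n-))=\{\psi<\phi\}=\bigcup_n(F_{\mu_{\underline{\mathcal P}(\nu)}}(\ut'_n),F_{\mu_{\underline{\mathcal P}(\nu)}}(\ot'_n-))$ directly.

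The one noteworthy difference is in how optimality is established. You prove $T\#\mu=\mu_{\underline{\mathcal P}(\nu)}$ and $T\#\nu_{\bar{\mathcal P}(\mu)}=\nu$ first, then compute the transport cost and match it against the explicit value $\int_0^1|\psi'(p-)|^\rho dp$ from Corollary~\ref{cor_equality_wass}. The paper instead verifies $T(F^{-1}_{\nu_{\bar{\mathcal P}(\mu)}}(q))=F^{-1}_\nu(q)$ and $T(F^{-1}_\mu(q))=F^{-1}_{\mu_{\underline{\mathcal P}(\nu)}}(q)$ for $dq$-a.e.\ $q$, and then invokes Proposition~2.17 of \cite{santambrogio} (the comonotone coupling is always $W_\rho$-optimal in dimension one), which delivers pushforward and optimality simultaneously. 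This shortcut avoids having to check separately that the piecewise-defined $T$ glues monotonically across component boundaries. The paper also leans on the observation (from Corollary~\ref{cor_equality_wass}) that $F^{-1}_\nu$ is constant on any atom interval of $F_{\nu_{\bar{\mathcal P}(\mu)}}$, which is what makes the case analysis at endpoints $\ut_n,\ot_n$ go through cleanly---precisely the ``bookkeeping of left-/right-limits and atoms'' you flag as the main obstacle.
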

\begin{proof} We set $\uq_n=F_{\mu }(\ut_n)$ and $\oq_n=F_{\mu }(\ot_n-)$.  From \eqref{def_curlies} and Lemma~\ref{prop_irred_comp2} below which characterizes the irreducible components in terms of the quantile functions, we get
  \begin{align*}
    \bigcup_{1\le n \le  N}(\uq_n,\oq_n) &=    \left\{q \in [0,1],  \int_0^q F_{\mu }^{-1} (p)dp > \int_0^q F_{\nu_{\bar{\mathcal P}(\mu)}}^{-1} (p)dp \right \} \\ 
    &=\left\{q \in [0,1], \psi(q)< \int_0^q F_{\mu }^{-1} (p)dp - \int_0^q F_{\nu}^{-1} (p)dp \right \}\\
    &= \left\{q \in [0,1], \int_0^q F_{\mu_{\underline{\mathcal P}(\nu)}}^{-1} (p)dp> \int_0^q F_{\nu}^{-1} (p)dp \right\}\\
    &=  \bigcup_{1\le n \le  N'}(F_{\mu_{\underline{\mathcal P}(\nu)}}(\ut'_n), F_{\mu_{\underline{\mathcal P}(\nu)} }(\ot'_n-) ),
  \end{align*}
  which gives the first claim. From the second equality and since $\psi$ is the convex hull of $[0,1]\ni q\mapsto\int_0^qF_\mu^{-1}(p)-F_\nu^{-1}(p)dp$, we get 
\begin{equation}
 \forall q\notin \cup_{1\le n\le N} (\uq_n,\oq_n),\;\psi(q)= \int_0^q F_{\mu }^{-1} (p)dp - \int_0^q F_{\nu}^{-1} (p)dp\label{egalfconvf}
\end{equation}
and 
$\psi(q)=\psi(\uq_n)+\frac{\psi(\oq_n)-\psi(\uq_n)}{\oq_n-\uq_n}(q-\uq_n)$ for $q\in [\uq_n,\oq_n]$. From~\eqref{def_curlies}, this gives 
\begin{equation}\label{envconvjoue}
   \forall q\in (\uq_n,\oq_n],\;F_{\nu_{\bar{\mathcal P}(\mu)}}^{-1}(q)=F_{\nu}^{-1}(q)+\frac{\psi(\oq_n)-\psi(\uq_n)}{\oq_n-\uq_n}\mbox{ and }F_{\mu_{\underline{\mathcal P}(\nu)}}^{-1}(q)=F_{\mu}^{-1}(q)-\frac{\psi(\oq_n)-\psi(\uq_n)}{\oq_n-\uq_n}.
\end{equation} 
Any point $q$ in $(0,1)\setminus \cup_{1\le n\le N} (\uq_n,\oq_n]$ 
 is the limit of an increasing sequence $(q_k)_{k\ge 1}$ of points in $(0,1)\setminus \cup_{1\le n\le N} (\uq_n,\oq_n)$. Since, by \eqref{def_curlies} and \eqref{egalfconvf}, $\frac{1}{q-q_k}\int_{q_k}^qF_{\nu_{\bar{\mathcal P}(\mu)}}^{-1}(p)dp=\frac{1}{q-q_k}\int_{q_k}^qF_{\mu}^{-1}(p)dp$ and $\frac{1}{q-q_k}\int_{q_k}^qF_{\mu_{\underline{\mathcal P}(\nu)}}^{-1}(p)dp=\frac{1}{q-q_k}\int_{q_k}^qF_{\nu}^{-1}(p)dp$, the left-continuity of the quantile functions implies that $F_{\nu_{\bar{\mathcal P}(\mu)}}^{-1}(q)=F_{\mu}^{-1}(q)$ and $F_{\mu_{\underline{\mathcal P}(\nu)}}^{-1}(q)=F_{\nu}^{-1}(q)$. We deduce that \begin{equation}\label{equalquantqq}
\forall q\in (0,1)\setminus\cup_{1\le n\le N} (\uq_n,\oq_n],\;  F_{\nu_{\bar{\mathcal P}(\mu)}}^{-1}(q)=F_{\mu}^{-1}(q)\mbox{ and }F_{\mu_{\underline{\mathcal P}(\nu)}}^{-1}(q)=F_{\nu}^{-1}(q).
\end{equation}

By Corollary~\ref{cor_equality_wass}, there exists an optimal transport map $\tilde T$ between $\nu_{\bar{\mathcal P}(\mu)}$ and $\nu$. By Proposition~2.17 in~\cite{santambrogio}, we have $dq$-a.e. $\tilde{T}(F_{\nu_{\bar{\mathcal P}(\mu)}}^{-1}(q))=F_{\nu}^{-1}(q)$. For $x\in\R$ such that  $F_{\nu_{\bar{\mathcal P}(\mu)}}(x-)<F_{\nu_{\bar{\mathcal P}(\mu)}}(x)$, since $F_{\nu_{\bar{\mathcal P}(\mu)}}^{-1}$ is constant (equal to $x$) on $(F_{\nu_{\bar{\mathcal P}(\mu)}}(x-),F_{\nu_{\bar{\mathcal P}(\mu)}}(x)]$, we deduce that the left-continuous function $F_\nu^{-1}$ is also constant on this interval. Let now $x\in\R\cap\cup_{1\le n\le N}\{\ut_n,\ot_n\}$. By definition of the irreducible components, we have \begin{equation}
   F_{\nu_{\bar{\mathcal P}(\mu)}}(x-)\le F_\mu(x-)\le F_\mu(x)\le  F_{\nu_{\bar{\mathcal P}(\mu)}}(x).\label{inegbord}
\end{equation} If $F_{\nu_{\bar{\mathcal P}(\mu)}}(x-)<F_{\nu_{\bar{\mathcal P}(\mu)}}(x)$, $(F_{\nu_{\bar{\mathcal P}(\mu)}}^{-1},F_{\nu}^{-1})$ is constant and equal to $(x,F_{\nu}^{-1}(F_{\nu_{\bar{\mathcal P}(\mu)}}(x)))$ on the interval $(F_{\nu_{\bar{\mathcal P}(\mu)}}(x-),
F_{\nu_{\bar{\mathcal P}(\mu)}}(x)]$ and, by definition of $T$, $T(F_{\nu_{\bar{\mathcal P}(\mu)}}^{-1})$ and $F_{\nu}^{-1}$ are equal on this interval.

We are now going to prove that $dq$ a.e., $T(F^{-1}_{\nu_{\bar{\mathcal P}(\mu)}}(q))=F_{\nu}^{-1}(q)$, which, by Proposition~2.17 in~\cite{santambrogio}, ensures that $T$ is an optimal transport map between $\nu_{\bar{\mathcal P}(\mu)}$ and ${\nu}$. 

If $q\in (F_{\nu_{\bar{\mathcal P}(\mu)}}(\ut_n),F_{\nu_{\bar{\mathcal P}(\mu)}}(\ot_n-))$ then $F_{\nu_{\bar{\mathcal P}(\mu)}}^{-1}(q)\in(\ut_n,\ot_n)$ and $T(F_{\nu_{\bar{\mathcal P}(\mu)}}^{-1}(q))=F_{\nu_{\bar{\mathcal P}(\mu)}}^{-1}(q)-\frac{\psi(\oq_n)-\psi(\uq_n)}{\oq_n-\uq_n}$ with the right-hand side equal to $F_{\nu}^{-1}(q)$ by \eqref{envconvjoue} since, by \eqref{inegbord}, $$(F_{\nu_{\bar{\mathcal P}(\mu)}}(\ut_n),F_{\nu_{\bar{\mathcal P}(\mu)}}(\ot_n-))\subset (\uq_n,\oq_n).$$ By the above reasoning for $x\in\R\cap\cup_{1\le n\le N}\{\ut_n,\ot_n\}$, the equality between $T(F_{\nu_{\bar{\mathcal P}(\mu)}}^{-1})$ and $F_\nu^{-1}$ still holds on $(F_{\nu_{\bar{\mathcal P}(\mu)}}(\ut_n-),F_{\nu_{\bar{\mathcal P}(\mu)}}(\ot_n-))\cup(F_{\nu_{\bar{\mathcal P}(\mu)}}(\ot_n-),F_{\nu_{\bar{\mathcal P}(\mu)}}(\ot_n)]$.

If $q\notin (F_{\nu_{\bar{\mathcal P}(\mu)}}(\ut_n-),F_{\nu_{\bar{\mathcal P}(\mu)}}(\ot_n)]$, then $F_{\nu_{\bar{\mathcal P}(\mu)}}^{-1}(q)\le\ut_n$ or $F_{\nu_{\bar{\mathcal P}(\mu)}}^{-1}(q)>\ot_n$. We deduce that for $q\notin \cup_{1\le n\le N}(F_{\nu_{\bar{\mathcal P}(\mu)}}(\ut_n-),F_{\nu_{\bar{\mathcal P}(\mu)}}(\ot_n)]$, $F_{\nu_{\bar{\mathcal P}(\mu)}}^{-1}(q)\notin\cup_{1\le n\le N}(\ut_n,\ot_n)$ and $T(F_{\nu_{\bar{\mathcal P}(\mu)}}^{-1}(q))=F_\nu^{-1}(F_{\nu_{\bar{\mathcal P}(\mu)}}(F_{\nu_{\bar{\mathcal P}(\mu)}}^{-1}(q)))$. The right-hand side is equal to $F_\nu^{-1}(q)$ when $F_{\nu_{\bar{\mathcal P}(\mu)}}(F_{\nu_{\bar{\mathcal P}(\mu)}}^{-1}(q)-)=F_{\nu_{\bar{\mathcal P}(\mu)}}(F_{\nu_{\bar{\mathcal P}(\mu)}}^{-1}(q))$ since then $F_{\nu_{\bar{\mathcal P}(\mu)}}(F_{\nu_{\bar{\mathcal P}(\mu)}}^{-1}(q))=q$ and otherwise when $q>F_{\nu_{\bar{\mathcal P}(\mu)}}(F_{\nu_{\bar{\mathcal P}(\mu)}}^{-1}(q)-)$ since, then, the interval $(F_{\nu_{\bar{\mathcal P}(\mu)}}(F_{\nu_{\bar{\mathcal P}(\mu)}}^{-1}(q)-),F_{\nu_{\bar{\mathcal P}(\mu)}}(F_{\nu_{\bar{\mathcal P}(\mu)}}^{-1}(q))]$ on which $F_\nu^{-1}$ is constant contains $q$. 

In conclusion $T(F_{\nu_{\bar{\mathcal P}(\mu)}}^{-1}(q))=F_\nu^{-1}(q)$ for $q$ outside the at most countable set $\{F_{\nu_{\bar{\mathcal P}(\mu)}}(\ot_n-):1\le n\le N\}\cup\{F_{\nu_{\bar{\mathcal P}(\mu)}}(x-):\;x\in\R\mbox{ s.t. }F_{\nu_{\bar{\mathcal P}(\mu)}}(x-)<F_{\nu_{\bar{\mathcal P}(\mu)}}(x)\}$ and therefore $dq$ a.e..

With \eqref{equalquantqq}, we deduce that $dq$ a.e. on $(0,1)\setminus\cup_{1\le n\le N}(\uq_n,\oq_n]$, $T(F_\mu^{-1}(q))=F_{\mu_{\underline{\mathcal P}(\nu)}}^{-1}(q)$. If $q\in(\uq_n,\oq_n)$ for some $1\le n\le N$, then $F_\mu^{-1}(q)\in(\ut_n,\ot_n)$ and, by definition of $T$ and \eqref{envconvjoue}, $T(F_\mu^{-1}(q))=F_\mu^{-1}(q)-\frac{\psi(\oq_n)-\psi(\uq_n)}{\oq_n-\uq_n}=F_{\mu_{\underline{\mathcal P}(\nu)}}^{-1}(q)$. Hence $dq$ a.e. $T(F_\mu^{-1}(q))=F_{\mu_{\underline{\mathcal P}(\nu)}}^{-1}(q)$ and $T$ is an optimal transport map between $\mu$ and $\mu_{\underline{\mathcal P}(\nu)}$.
\end{proof}

\section{Numerical experiments}\label{secnum}

\subsection{Wasserstein distance}

We start by illustrating numerically the convergences obtained in Proposition~\ref{propconv2}, and deduced from Theorem~\ref{prop_curlyvee}. We present on an example the convergence of the Wasserstein projection $(\mu_I)^\rho_{\underline{\mathcal P}(\nu_I)}$ (resp. $(\nu_I)^\rho_{\bar{\mathcal P}(\mu_I)}$) toward $\mu$ (resp. $\nu$) for the Wasserstein distance when   $\mu_I$ and $\nu_I$ are the respective empirical measures of $\mu$ and $\nu$ with $\mu \lecx \nu$.  To do so we consider an example in dimension one with $\rho=2$, so that the projections can be calculated explicitly according to Theorems~\ref{propprojd1} and~\ref{prop_curlyvee}. We take $\mu=\mathcal{N}(0,1)$ and $\nu=\mathcal{N}(0,1.1)$. For $I\ge 1$, we consider independent samples $X_1,\dots,X_I$ and $Y_1,\dots,Y_I$ distributed respectively according to $\mu$ and $\nu$. Then, we set $\mu_I=\frac{1}{I} \sum_{i=1}^I \delta_{X_i}$, $\nu_I=\frac{1}{I} \sum_{i=1}^I \delta_{Y_i}$, $\bar{X}_I=\frac{1}{I}\sum_{i=1}^IX_i$, $\bar{Y}_I=\frac{1}{I}\sum_{i=1}^IY_i$, $\tmu_I=\frac{1}{I} \sum_{i=1}^I \delta_{X_i-\bar{X}_I}$ and $\tnu_I=\frac{1}{I} \sum_{i=1}^I \delta_{Y_i-\bar{Y}_I}$. Notice that, to define $\tmu_I$ and $\tnu_I$, we took advantage of the knowledge of the common mean of $\mu$ and $\nu$. This situation is usual in financial applications : discounted asset prices are martingales and their means are given by the present values. We calculate the Wasserstein projections $(\mu_I)_{\underline{\mathcal P}(\nu_I)}$ and $(\tmu_I)_{\underline{\mathcal P}(\tnu_I)}$ (resp. $(\nu_I)_{\bar{\mathcal P}(\mu_I)}$ and $(\tnu_I)_{\bar{\mathcal P}(\tmu_I)}$) and the $2$-Wasserstein distance between each of these measures and $\mu$ (resp. $\nu$), as explained below. 

As a comparison to these projections, we consider the respective approximations of $\mu$ and $\nu$ by $\mu_I\wedge \nu_I$ and $\mu_I\vee\nu_I$, where $\mu_I\wedge \nu_I$ and $\mu_I\vee \nu_I$ are respectively defined as the infimum and the supremum of $\mu_I$ and $\nu_I$ for the decreasing convex order when $\frac{1}{I}\sum_{i=1}^I X_i\le \frac{1}{I}\sum_{i=1}^I Y_i$ and for the increasing convex order otherwise so that $\mu_I\wedge \nu_I\in\underline{\mathcal P}(\nu_I)$ and $\mu_I\vee\nu_I\in\bar{\mathcal P}(\mu_I)$. We also consider the approximations by $\tmu_I\wedge \tnu_I$ and $\tmu_I\vee\tnu_I$. These approximations can be calculated explicitly for probability measures with finite support (see~\cite{ACJ3} or~\cite{ACJ1}) and are natural alternatives to the Wasserstein projections in dimension~1.

\begin{figure}[h]
  \centering
  \includegraphics[width=\linewidth]{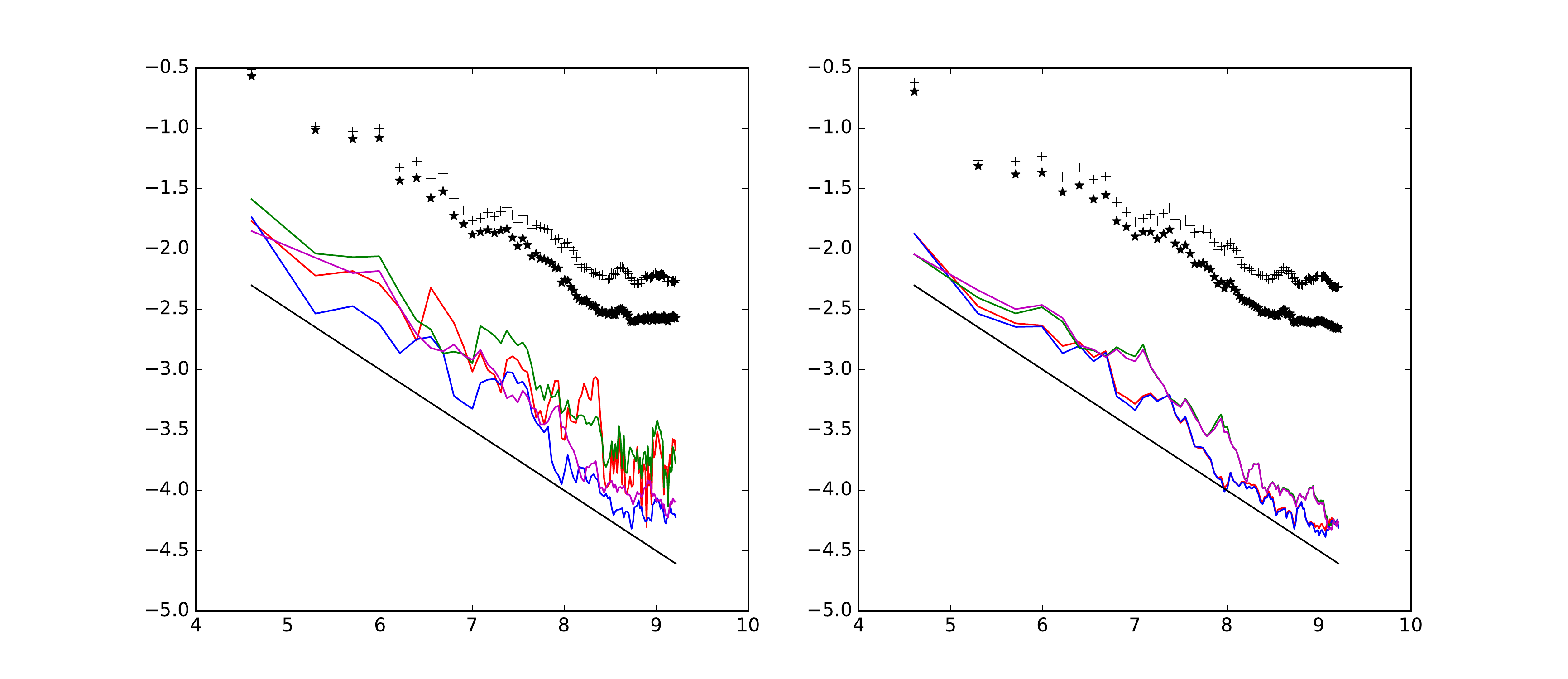}
  \caption{Plot of the logarithms of the Wasserstein distances in function of~$\log(I)$. } \label{Fig_Wass_cv_1}
\end{figure}
The graph at left (resp. right) of Figure~\ref{Fig_Wass_cv_1} illustrates the convergence of $W_2(\mu,\mu_I\wedge \nu_I)$, $W_2(\mu,(\mu_I)_{\underline{\mathcal P}(\nu_I)})$, $W_2(\mu_I\vee \nu_I,\nu)$ and $W_2((\nu_I)_{\bar{\mathcal P}(\mu_I)},\nu)$ (resp. $W_2(\mu,\tmu_I\wedge \tnu_I)$, $W_2(\mu,(\tmu_I)_{\underline{\mathcal P}(\tnu_I)})$, $W_2(\tmu_I\vee \tnu_I,\nu)$ and $W_2((\tnu_I)_{\bar{\mathcal P}(\tmu_I)},\nu)$ ) toward zero as $I\to\infty$. The corresponding curves are respectively in red, blue, green and magenta. The star (resp. cross) points indicate the upper bound for $W_2(\mu,(\mu_I)_{\underline{\mathcal P}(\nu_I)})$  (left) and $W_2(\mu,(\tmu_I)_{\underline{\mathcal P}(\tnu_I)})$ (right) (resp. $W_2((\nu_I)_{\bar{\mathcal P}(\mu_I)},\nu)$ (left)  and $W_2((\tnu_I)_{\bar{\mathcal P}(\tmu_I)},\nu)$ (right)) given by Proposition~\ref{prop_cv_curlywedge} (resp.Theorem~\ref{prop_curlyvee}). 
As expected, the curves in blue and magenta are below these points. Let us mention that all these Wasserstein distances are calculated exactly by using the quantile function ${\mathcal N}^{-1}$ of the standard normal variable. For instance, if $\eta=\sum_{i=1}^Ip_i\delta_{Z_i}$ with $Z_1\le Z_2\le\hdots\le Z_I$, $P_0=0$ and $P_i=P_{i-1}+p_i$ for $1\le i\le I$, 
\begin{align*}
   W_2^2(\mu, \eta)&=\int_\R x^2(\mu(dx)+\eta(dx))-2 \sum_{i=1}^I Z_i\int_{P_{i-1}}^{P_i}{\mathcal N}^{-1}(p)dp\\&=1+\sum_{i=1}^Ip_iZ_i^2+\frac{\sqrt{2}}{\sqrt{\pi}}\sum_{i=1}^IZ_i\left(e^{-({\mathcal N}^{-1}(P_i))^2/2}-e^{-({\mathcal N}^{-1}(P_{i-1}))^2/2}\right).
\end{align*}
Asymptotically, the measure $(\mu_I)_{\underline{\mathcal P}(\nu_I)}$ (resp.  $(\nu_I)_{\bar{\mathcal P}(\mu_I)}$) seems to slightly better approximate $\mu$ (resp. $\nu$) than  $\mu_I\wedge \nu_I$ (resp. $\mu_I\vee \nu_I$). Nonetheless, all these measures seem to converge for the Wasserstein distance at a rate close to $O(I^{-1/2})$ as indicated by the line in black with equation  $y=-x/2$. This rate is better than the theoretical one stated in Proposition~\ref{propconv2}. In the right figure, we first observe that equalizing the means improves the approximations and reduces the Wasserstein distances (see the distances to the black lines). However, the rate of convergence is still roughly in $O(I^{-1/2})$. We also observe that there are only very small differences between using $\tmu_I\wedge \tnu_I$ or $(\tmu_I)_{\underline{\mathcal P}(\tnu_I)}$ (resp.  $\tmu_I\vee \tnu_I$ or $(\tnu_I)_{\bar{\mathcal P}(\tmu_I)}$).
\begin{figure}[h]
  \centering
  \includegraphics[width=\linewidth]{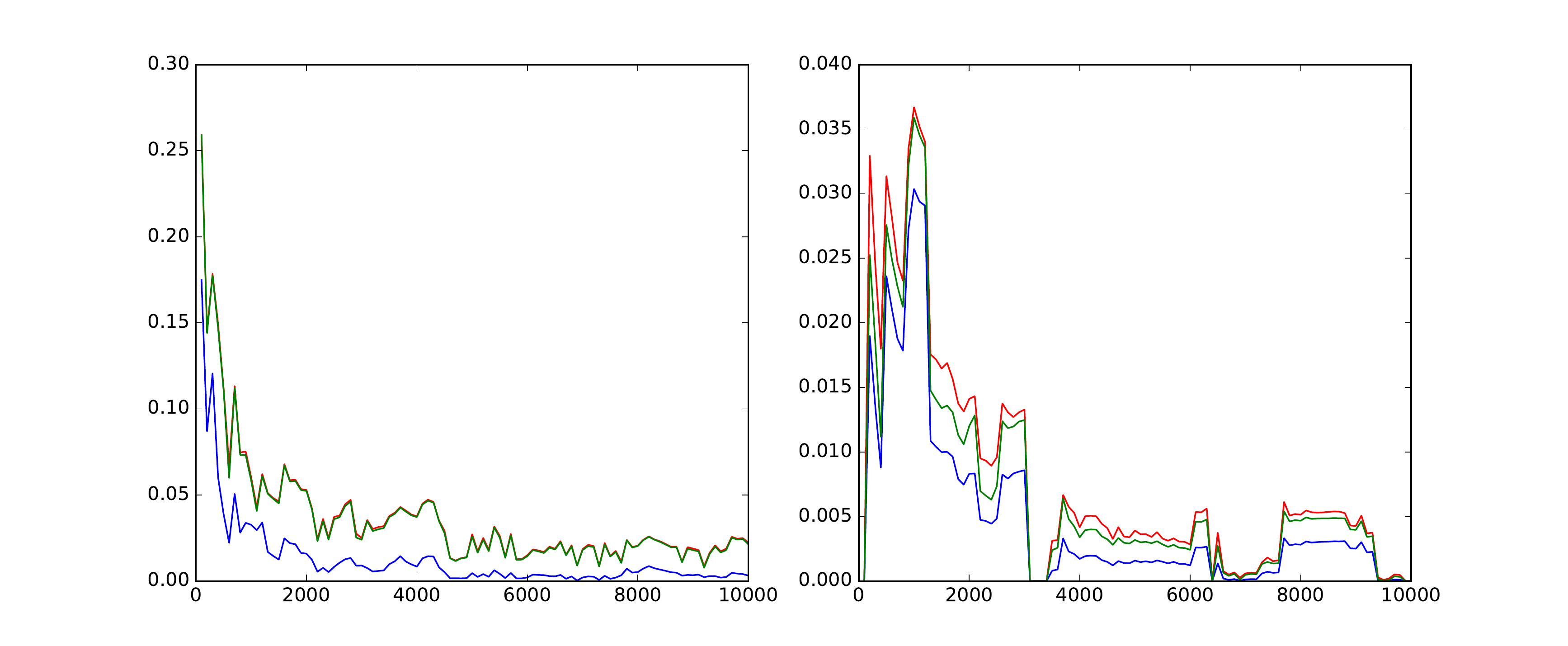}
  \caption{Plot of the Wasserstein distances $W_2(\mu_I,\mu_I\wedge\nu_I)$, $W_2(\mu_I,(\mu_I)_{\underline{\mathcal P}(\nu_I)})$, $W_2(\nu_I,\mu_I\vee\nu_I)$ (left) and  $W_2(\tmu_I,\tmu_I\wedge\tnu_I)$, $W_2(\tmu_I,(\tmu_I)_{\underline{\mathcal P}(\tnu_I)})$, $W_2(\tnu_I,\tmu_I\vee\tnu_I)$ (right) in function of~$I$. } \label{Fig_Wass_cv_2}
\end{figure}

In Figure~\ref{Fig_Wass_cv_2} are plotted at left (resp. right) the values of $W_2(\mu_I,\mu_I\wedge\nu_I)$, $W_2(\mu_I,(\mu_I)_{\underline{\mathcal P}(\nu_I)})=W_2(\nu_I,(\nu_I)_{\bar{\mathcal P}(\mu_I)}) $,  $W_2(\nu_I,\mu_I\vee\nu_I)$ (resp. $W_2(\tmu_I,\tmu_I\wedge\tnu_I)$, $W_2(\tmu_I,(\tmu_I)_{\underline{\mathcal P}(\tnu_I)})=W_2(\tnu_I,(\tnu_I)_{\bar{\mathcal P}(\tmu_I)}) $,  $W_2(\tnu_I,\tmu_I\vee\tnu_I)$) in function of~$I$. The corresponding curves are in red, blue and green. We observe that the values of  $W_2(\mu_I,\mu_I\wedge\nu_I)$ and  $W_2(\nu_I,\mu_I\vee\nu_I)$ are very close. As expected, the blue curve is below the two other ones. At right, we observe that all the Wasserstein distances are equal to $0$ on our sample for $I \approx 3200$, but take again positive values for larger values of~$I$. This shows that the value of $I$ from which we have $\tmu_I \lecx \tnu_I$, if it exists, depends on the sample and may be large.

Now, we conclude this section by checking the accuracy of the solver COIN-OR\footnote{{\tt https://www.coin-or.org/}} for the quadratic optimization problem~\eqref{miniquad} with $\rho=2$. In fact, in dimension~1, we know that $(\mu_I)_{\underline{\mathcal P}(\nu_I)}$ can be calculated explicitly as described below Theorem~\ref{propprojd1}. In Table~\ref{table_COINOR}, we calculate the Wasserstein distance between $(\mu_I)_{\underline{\mathcal P}(\nu_I)}$ and the measure  obtained by solving numerically~\eqref{miniquad} with COIN-OR for different sample sizes $I$. 
\begin{table}[H]
  \begin{centering}
    \begin{tabular}{|r||c|c|c|c|c|}
      \hline
      $I$ & 10 & 50 & 100 & 200 & 300\\
      \hline
      $W_2$-Wasserstein distance & $4.4 \times 10^{-5}$  & $1.4\times 10^{-6}$ & $4.5 \times 10^{-6}$ & $4.1\times 10^{-7}$ & $4.2\times 10^{-7}$ \\
      \hline
    \end{tabular}
  \end{centering}
  \caption{Comparison of the numerical minimizer of~\eqref{miniquad} for $\rho=2$ with the explicit solution $(\mu_I)_{\underline{\mathcal P}(\nu_I)}$. }\label{table_COINOR}
\end{table}
As expected, the difference is very small.  This validates numerically our theoretical results. More importantly, this indicates that the solver is reliable for finding the optimal solution with the values of~$I$ that we have considered in this paper.

\subsection{ MOT problems in dimension~2 with two marginal laws.}

\subsubsection{An explicit example}

Let $\mu$ and $\nu$ be respectively the uniform distributions on $[-1,1]^2$ and $[-2,2]^2$. For $x=(x^1,x^2) \in \R^2$ and $y=(y^1,y^2) \in \R^2$, we consider the minimization of the cost function $c(x,y)=|x^1-y^1|^\rho+|x^2-y^2|^\rho$, with $\rho>2$. For any $\pi \in \Pi^M(\mu,\nu)$, we have $\int_{\R^2 \times \R^2} \|y-x\|_2^2 \pi(dx,dy)=\int_{\R^2 } \|y\|_2^2 \nu(dy) - \int_{\R^2 } \|x\|_2^2 \mu(dx)=2$. Jensen's inequality gives
\begin{align*}
  \int_{\R^2 \times \R^2} |x^1-y^1|^\rho&+|x^2-y^2|^\rho  \pi(dx,dy)\\& \ge \left( \int_{\R^2 \times \R^2} |x^1-y^1|^2  \pi(dx,dy) \right)^{\frac \rho 2} + \left(\int_{\R^2 \times \R^2} |x^2-y^2|^2  \pi(dx,dy) \right)^{\frac \rho 2}=2. 
\end{align*}
The equality condition in Jensen's equality gives that $|x^1-y^1|=|x^2-y^2|=1$, $\pi(dx,dy)$-almost surely. Now, let us consider $X=(X^1,X^2)$ be distributed according to $\mu$ and $Z=(Z^1,Z^2)$ a couple of independent Rademacher random variables which is independent of $X$. Then $Y=X+Z$ is distributed according to $\nu$ and satisfies $|Y^1-X^1|=|Y^2-X^2|=1$. The probability distribution $\pi^\star$ of $(X,Y)$ is the unique martingale optimal coupling that minimizes $\int_{\R^2 \times \R^2} c(x,y) \pi(dx,dy)$. Indeed, if $(\tilde{X},\tilde{Y})$ is distributed according to an optimal coupling, then $\tilde{Y}^1-\tilde{X}^1$ and $\tilde{Y}^2-\tilde{X}^2$ follow the Rademacher distribution, and both these random variables are necessarily independent of $\tilde{X}$ in order to satisfy the martingale property. Last,  $\tilde{Y}^1-\tilde{X}^1$ and $\tilde{Y}^2-\tilde{X}^2$ are necessarily independent, otherwise $\tilde{Y}$ would not follow~$\nu$.

We now illustrate the MOT and consider independent samples $(X^1_1,X^2_1),\dots,(X^1_I,X^2_I)$ and $(Y^1_1,Y^2_1),\dots,(Y^1_I,Y^2_I)$ respectively distributed according to $\mu$ and $\nu$. We set $\tmu_I= \frac 1I \sum_{i=1}^I \delta_{(X^1_i -\bar{X}_I^1,X^2_i -\bar{X}_I^2)}$ and $\tnu_I=\frac 1I \sum_{i=1}^I \delta_{(Y^1_i-\bar{Y}_I^1,Y^2_i-\bar{Y}_I^2)}$, with $\bar{X}^\ell_I=\frac 1I \sum_{i=1}^I X^\ell_i$ and $\bar{Y}^\ell_I=\frac 1I \sum_{i=1}^I Y^\ell_i$. We work with $\tmu_I$ and $\tnu_I$ rather than with the empirical measures $\mu_I$ and $\nu_I$ since we have noticed on our experiments that they better approximate $\mu$ and $\nu$ (see Figure~\ref{Fig_Wass_cv_1}) and give better results for the approximation of MOT problems (see~\cite{ACJ3}). Let us mention here that in financial applications, it is generally  possible to calculate $\tmu_I$ and $\tnu_I$ from the empirical measures $\mu_I$ and $\nu_J$ since the mean of $\mu$ and $\nu$ is given by the current price of the underlying assets.
To calculate $(\tmu_I)^2_{\underline{\mathcal P}(\tnu_I)}$, we have to solve the quadratic optimization problem with linear constraints described in equation~\eqref{miniquad} for $\rho=2$. The dimension of the problem is thus equal to~$I^2$. We have used the COIN-OR solver in our numerical experiments, which enables us to solve~\eqref{miniquad} for $I$ up to $500$. Once $(\tmu_I)^2_{\underline{\mathcal P}(\tnu_I)}=\frac 1I \sum_{i=1}^I \delta_{(\tilde{X}^1_i,\tilde{X}^2_i)^\star}$ is calculated, we can then solve the discrete MOT problem between $(\tmu_I)^2_{\underline{\mathcal P}(\tnu_I)}$ and $\tnu_I$. 
\begin{figure}[h]
\begin{centering}
  \includegraphics[width=\textwidth]{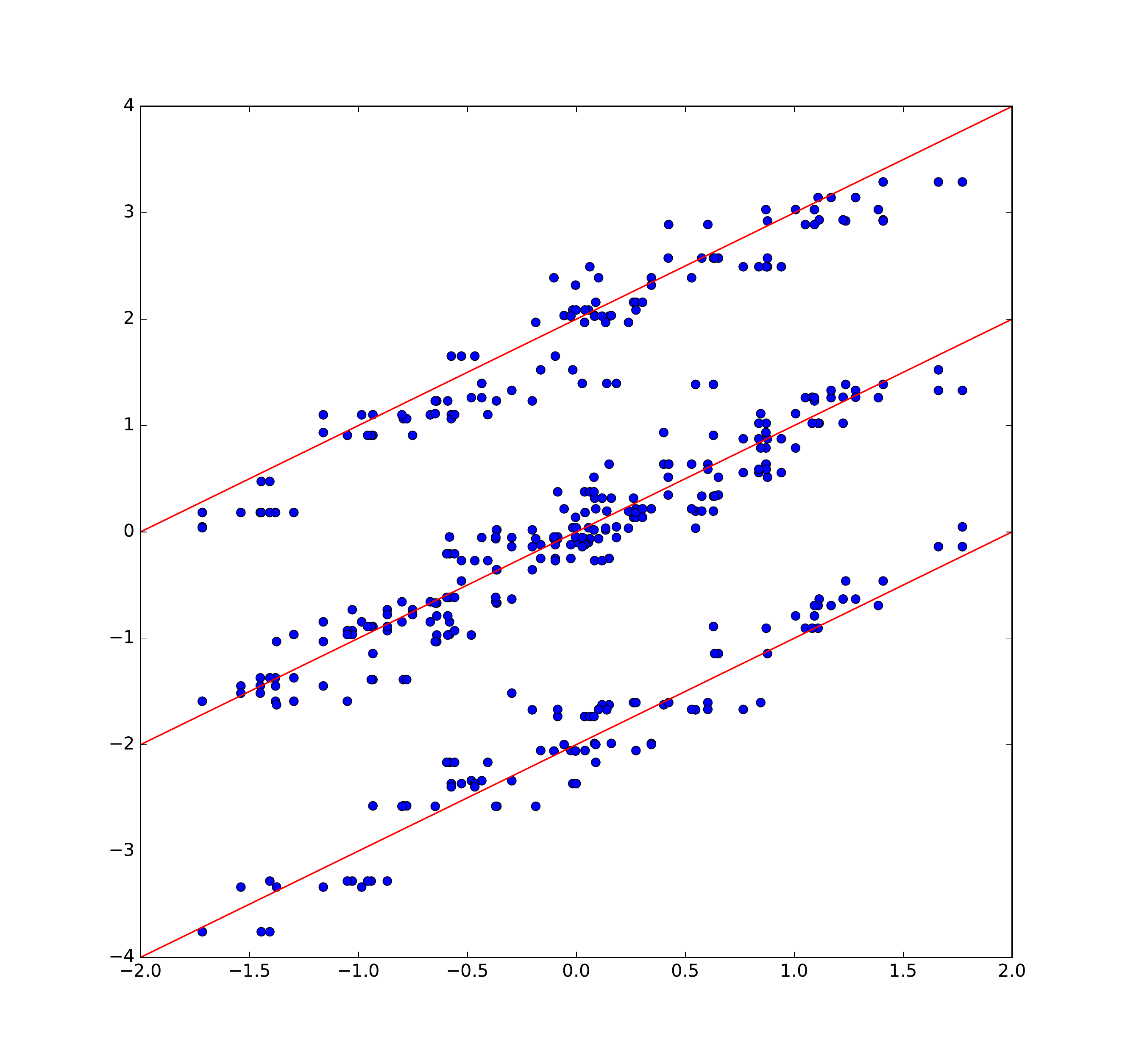}
  \caption{Plot of  $y^2_i-y^1_i$ in function of $x^2_i-x^1_i$ for the points $(x_i,y_i)$ with positive probability in the MOT for  $((\tmu_I)^2_{\underline{\mathcal P}(\tnu_I)},\nu_I)$, with $I=100$. In red are drawn the lines $y=x-2$, $y=x$ and $y=x+2$.} \label{2d_parallel}
\end{centering}
\end{figure}

In Figure~\ref{2d_parallel}, for $\rho=2.5$, we have plotted $y^2_i-x^2_i$ in function of $y^1_i-x^1_i$ for the points $(x_i,y_i)$ with positive probability in the MOT for  $((\tmu_I)^2_{\underline{\mathcal P}(\tnu_I)},\nu_I)$. We recall that the optimal coupling for the continuous MOT is given by $(X,Y)$ with $X\sim \mu$ and $Y=X+Z$, $Z$ being a couple of independent Rademacher random variables. Since $Y_2-Y_1=X_2-X_1+Z_2-Z_1$ and $Z_2-Z_1$ takes values in $\{-2,0,2\}$, we expect to observe that the points are gathered around the lines $y=x-2$, $y=x$ and $y=x+2$, which is the case on Figure~\ref{2d_parallel}.  This checks our implementation of the algorithm. Besides, we have calculated on $100$ independent runs the value of the discrete MOT for  $((\tmu_I)^2_{\underline{\mathcal P}(\tnu_I)},\nu_I)$ with $I=100$: the average is equal to $2.0064$ and the standard deviation is equal to $0.2213$, which gives $[1.9631,2.0498]$ as $95\%$ confidence interval, which approximates well the value of the continuous MOT.

\subsubsection{Model-free bounds on a best-of option}

Let $(G^1,G^2)$ be a centered Gaussian vector with covariance matrix $\Sigma$. We denote by $\mu$ the law of $(X^1,X^2)$ with $X^\ell=\exp(G^\ell-\Sigma_{\ell \ell}/2)$ for $\ell\in \{1,2\}$, and  by $\nu$ the law of  $(Y^1,Y^2)$ with $Y^\ell=\exp(\sqrt{2}G^\ell-\Sigma_{\ell \ell})$. In the financial context, this choice of marginal laws is usual and corresponds to a two-dimensional Black-Scholes model: $(X^1,X^2)$ is the price of two assets at time~$t>0$ and $(Y^1,Y^2)$ is the price of these assets at time~$2t$. We are interested in an option that pays $\max(Y^1-X^1,Y^2-X^2,0)$, i.e. the best arithmetic performance of the two assets, if positive. The price of this option in the Black-Scholes model can easily be calculated by using a Monte-Carlo algorithm.

Let $(X^1_1,X^2_1),\dots,(X^1_I,X^2_I)$ and $(Y^1_1,Y^2_1),\dots,(Y^1_I,Y^2_I)$ denote independent samples respectively distributed according to $\mu$ and $\nu$. We set $\tmu_I= \frac 1I \sum_{i=1}^I \delta_{(\tilde{X}^1_i,\tilde{X}^2_i)}$ and $\tnu_I=\frac 1I \sum_{i=1}^I \delta_{(\tilde{Y}^1_i,\tilde{Y}^2_i)}$, with $(\tilde{X}^1_i,\tilde{X}^2_i)=(X^1_i +1-\bar{X}_I^1,X^2_i +1-\bar{X}_I^2)$,  $(\tilde{Y}^1_i,\tilde{Y}^2_i)=(Y^1_i +1-\bar{Y}_I^1,Y^2_i +1-\bar{Y}_I^2)$, $\bar{X}^\ell_I=\frac 1I \sum_{i=1}^I X^\ell_i$ and $\bar{Y}^\ell_I=\frac 1I \sum_{i=1}^I Y^\ell_i$. 
We calculate $(\tmu_I)^2_{\underline{\mathcal P}(\tnu_I)}$ numerically by using again the quadratic optimization solver COIN-OR, and then solve the discrete MOT problem between $(\tmu_I)^2_{\underline{\mathcal P}(\tnu_I)}$ and $\tnu_I$. 

\begin{figure}
\begin{centering}
  \includegraphics[width=\textwidth]{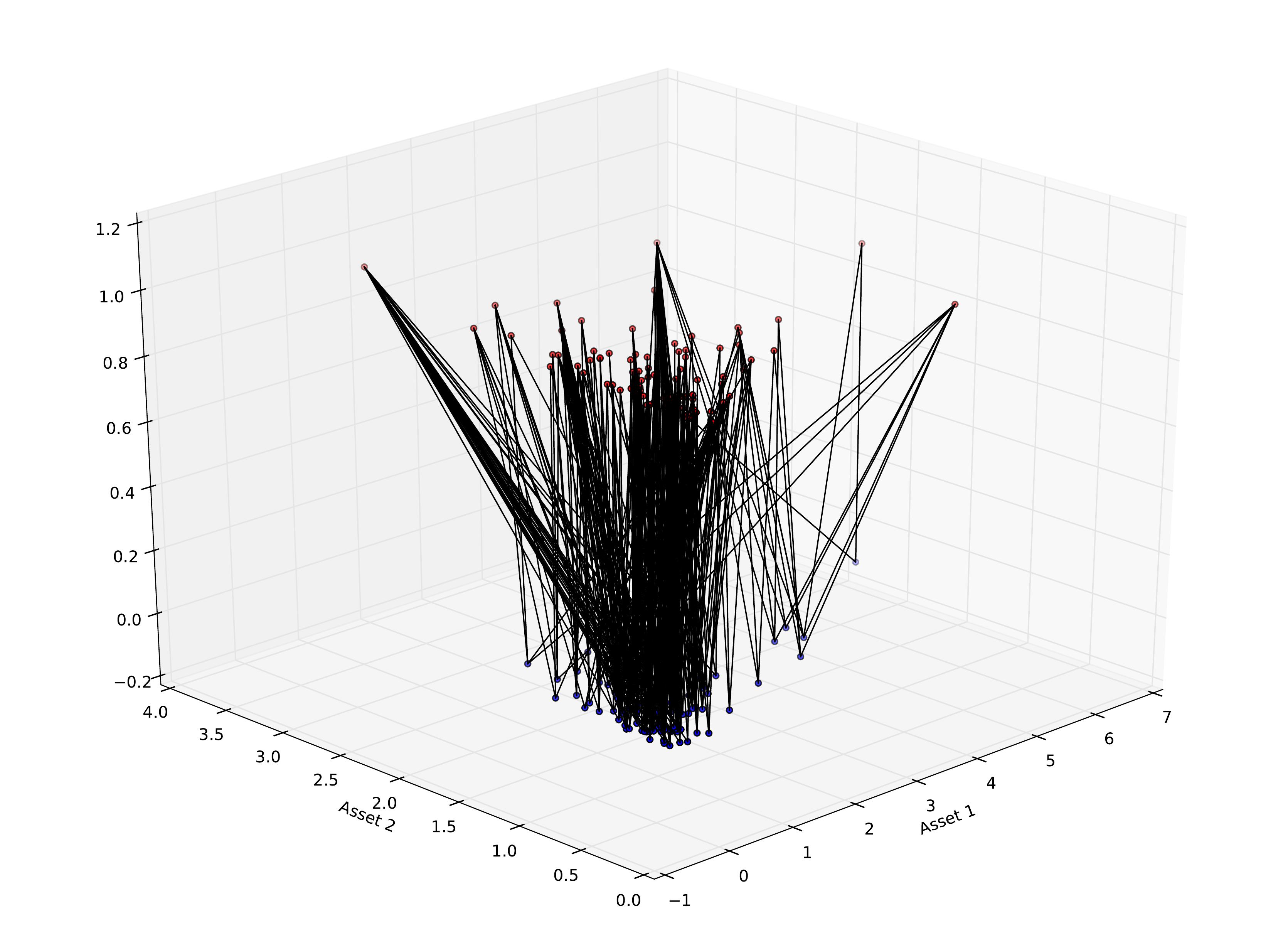}

  \includegraphics[width=\textwidth]{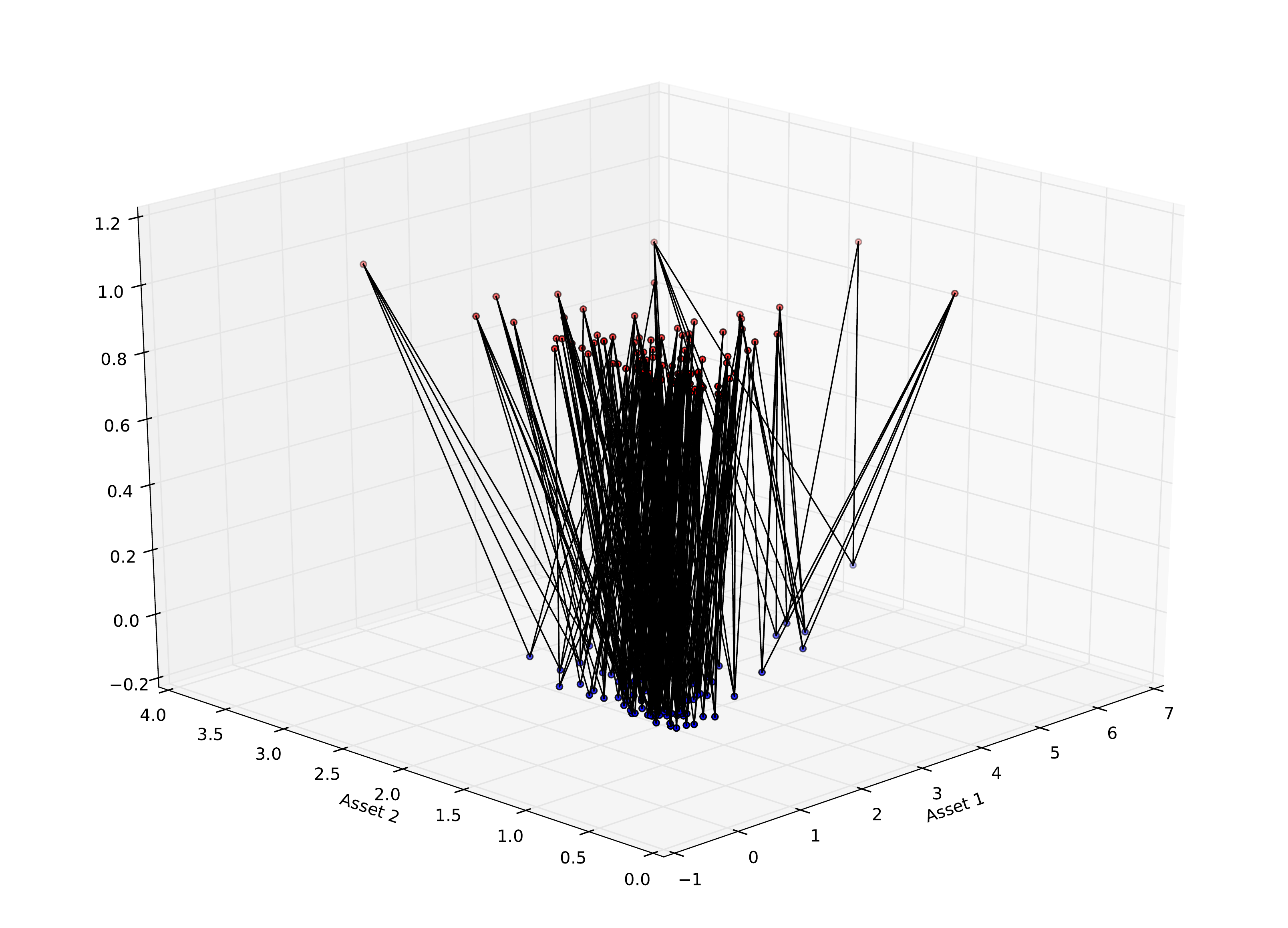}
\end{centering}
 \caption{Discrete MOT ($I=100$) in dimension~2 for the minimization problem (top) and the maximization problem (bottom). } \label{LN_2d}
\end{figure}
We now turn to our example illustrated in Figure~\ref{LN_2d}. We have considered the following covariance matrix $\Sigma=\left[ \begin{matrix} 0.5 & 0.1 \\ 0.1 & 0.1 \end{matrix} \right]$. With this choice, the Black-Scholes price of the option is approximately equal to $0.345$. With $I=100$, we have calculated on $100$ independent runs  the value of the minimization and the maximization programs, and then computed the mean values. We have thus obtained $0.2293$ for the lower bound price and $0.4111$ for the upper bound price. The corresponding standard variations are respectively $0.0848$ and $0.1422$, which makes 95\% confidence intervals with half lengths $0.017$ and $0.028$. In Figure~\ref{LN_2d}, we have plotted the discrete MOT on the same sample for the minimization and the maximization problem. Precisely, we have plotted  the points $(\tilde{X}^1_i,\tilde{X}^2_i)^\star$, $i\in\{1,\dots,I\}$ in the hyperplane $z=0$ and the points $(\tilde{Y}^1_i,\tilde{Y}^2_i)$ in the hyperplane $z=1$. The edges between the points $(\tilde{X}^1_i,\tilde{X}^2_i)^\star$ and $(\tilde{Y}^1_j,\tilde{Y}^2_j)$ indicate that the optimal coupling gives a positive weight to the corresponding transitions. The difference between the two optimal couplings is clear. We can heuristically explain the graphs as follows. The cost function $c(x,y)=\max(y^1-x^1,y^2-x^2,0)$ will anyway be positive for a large increase of one of the two assets. Therefore, to minimize the cost, one has to gather the large increases of Asset~$1$ and Asset~$2$. Instead, to maximize the cost, it is better to gather an increase of one asset with a decrease of the other one.

The CPU time needed for the computation of the Wasserstein projection and for the linear programming problem is reported in Table~\ref{table_COINOR2}. The dimension $d=1$ rows of the table correspond to the MOT problem between the laws of $X_1$ and $Y_1$ for the cost function $\max(y-x,0)$. What mainly influences the computation time is the dimension $I^2$ in which the optimal matrix $(r_{ij})$ has to be found. The dimension $d$ of the underlying space of the probability measures has a low impact on the computation time for the quadratic problem~\eqref{miniquad}, since the number of equality constraints $2I$ does not change with $d$. Instead, it has some impact on the linear programming problem~\eqref{MOT_discrete}, since the number of equality constraints $(2+d)I$ increases with $d$. Nonetheless, since the resolution of the linear problem is much less time consuming  than the resolution of the quadratic problem, the impact of the dimension $d$ on the overall computation time is rather mild. 
\begin{table}[H]
  \begin{centering}
    \begin{tabular}{|r||c|c|c|c|c|}
      \hline
      $I$ & 100 & 150 & 200 & 300 & 500\\
      \hline
      Quadratic problem~\eqref{miniquad}, $d=1$ & $1.5 s$  & $4.8s$ & $18s$ & $88s$ & $673s$ \\
      Quadratic problem~\eqref{miniquad}, $d=2$ & $1.3 s$  & $10s$ & $22 s$ & $105s$ & $807s$ \\
      \hline      
      Linear problem~\eqref{MOT_discrete}, $d=1$  & $0.3s$  & $0.78s$ & $2s$ & $6.6s$ & $41s$ \\
      Linear problem~\eqref{MOT_discrete}, $d=2$  & $0.43s$  & $2s$ & $4.5s$ & $19.5s$ & $120s$ \\
      \hline
    \end{tabular}
  \end{centering}
  \caption{Computation time on a CPU Intel Core i7 at 2.6GHz with COIN-OR of the quadratic and linear problems in dimensions $d=1$ and $d=2$. }\label{table_COINOR2}
\end{table}

\subsection{Further directions}

In view of Propositions~\ref{prop_cv_curlywedge} and~\ref{propconv2}, it would be nice to prove the stability of $$\inf_{\pi\in\Pi^M(\mu,\nu)}\int_{\R^d\times\R^d}c(x,y)\pi(dx,dy)$$ with respect to $\mu$ and $\nu$ in ${\mathcal P}(\R^d)$ for the weak convergence topology or the Wasserstein distance. On our numerical example of Figure~\ref{2d_parallel} where the continuous MOT is explicit, the convergence of the discrete optimal cost towards the continuous one seems to hold.
We plan to investigate this property in a future work. Note that for cost functions satisfying the so-called Spence-Mirrlees condition (see~\cite{HLTo}), the stability of left-curtain couplings obtained by Juillet~\cite{Juillet} is an important step in that direction. 

To overcome the sample size limitation for the linear programming solvers to compute the solution of problem  \eqref{MOT_discrete}, one can contemplate introducing an entropic regularization of this problem similar to the one proposed by Benamou et al.~\cite{BCCNP} for discrete optimal transport.
For $\mu_I=\sum_{i=1}^I p_i\delta_{x_i}\lecx\nu_J=\sum_{j=1}^J q_j\delta_{y_j}$ and $\varepsilon>0$, the regularized problem is the minimization of \begin{equation*}\sum_{i=1}^I\sum_{j=1}^J r^\varepsilon_{ij} \left(c(x_i,y_j)+\varepsilon(\ln r^\varepsilon_{ij}-1)\right)
 \end{equation*}
 under the constraints $r^\varepsilon_{ij}\ge 0,\ \sum_{i=1}^I r^\varepsilon_{ij}=q_j\mbox{ for }j\in\{1,\hdots,J\}, \ \sum_{j=1}^J r^\varepsilon_{ij}=p_i$ and $\sum_{j=1}^J r^\varepsilon_{ij}y_j=p_ix_i$ for $i\in\{1,\hdots,I\}$. Since the constraints are affine, this problem can be solved by the iterative Bregman projections presented in~\cite{BCCNP}. In particular the solution is obtained by iterating successive entropic projections on the first marginal law constraints, on the second marginal law constraints and on the martingale constraints. The two first projections are explicit (see for instance Proposition~1~\cite{BCCNP}). The entropic projection on the martingale constraints can be computed using the generalized iterative scaling algorithm introduced by Darroch and Ratcliff~\cite{DR}. Such an approach combined with a relaxation of the martingale constraint has been recently investigated by Guo and Obl\`{o}j~\cite{GuOb}.
   
\appendix 
\section{Technical lemmas}
\begin{lemma}\label{lem_ordre_cvx} Let $\mu,\nu \in \mathcal{P}_1(\R^d)$. Then, we have $\mu \lecx \nu$ if, and only if,
  $$\forall \phi:\R^d\rightarrow \R \text{ convex and such that } \sup_{x\in\R^d}\frac{|\phi(x)|}{1+|x|}<\infty, \int_{\R^d}\phi(x)\mu(dx)\le \int_{\R^d}\phi(x)\nu(dx).$$
\end{lemma}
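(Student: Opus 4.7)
The forward direction is immediate: any convex $\phi$ with $\sup_{x\in\R^d}|\phi(x)|/(1+|x|)<\infty$ is $(\mu+\nu)$-integrable because $\mu,\nu\in\mathcal{P}_1(\R^d)$, and the definition of $\lecx$ recalled at the start of the introduction then yields $\int_{\R^d}\phi\, d\mu\le \int_{\R^d}\phi\, d\nu$ directly.

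For the converse, the plan is to approximate a general convex test function $\phi:\R^d\to\R$ (either nonnegative, or $(\mu+\nu)$-integrable) from below by convex functions of linear growth, via the inf-convolution
\[
\phi_n(x) := \inf_{y\in\R^d}\big(\phi(y)+n|x-y|\big),\qquad n\in\N.
\]
The first task is to collect the basic properties of $\phi_n$. It is convex, being the infimum in $y$ of a jointly convex function, and it is $n$-Lipschitz in $x$, so it has linear growth. The choice $y=x$ gives $\phi_n\le \phi$, and the sequence is clearly nondecreasing in $n$. Fixing a subgradient $p_0\in \partial\phi(0)$ yields the global affine minorant $\phi(y)\ge \phi(0)-|p_0||y|$, from which a short computation shows $\phi_n(x)\ge \phi(0)-|p_0||x|$ as soon as $n\ge |p_0|$, ensuring $\phi_n$ is finite-valued. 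The pointwise convergence $\phi_n(x)\uparrow \phi(x)$ will then follow from continuity of $\phi$: if it failed along some subsequence, the approximate minimizers would, by the penalty $n|x-y|$ combined with the same affine lower bound, be forced to converge to $x$, contradicting continuity.

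Applying the hypothesis to each $\phi_n$ with $n\ge |p_0|$ gives $\int_{\R^d} \phi_n\, d\mu\le \int_{\R^d} \phi_n\, d\nu$, so it only remains to pass to the limit. When $\phi\ge 0$, the shifted sequence $x\mapsto \phi_n(x)+|p_0||x|$ is bounded below by the constant $\phi(0)$ and increases to $x\mapsto \phi(x)+|p_0||x|$, so monotone convergence combined with $\mu,\nu\in\mathcal{P}_1(\R^d)$ yields $\int \phi_n\, d\eta\to \int \phi\, d\eta$ for $\eta\in\{\mu,\nu\}$. When $\phi$ is $(\mu+\nu)$-integrable, the pointwise bound $|\phi_n(x)|\le |\phi(x)|+|\phi(0)|+|p_0||x|$ supplies a $(\mu+\nu)$-integrable dominant and dominated convergence applies. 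In either case $\int_{\R^d}\phi\, d\mu\le \int_{\R^d}\phi\, d\nu$, which is exactly $\mu\lecx\nu$. The only step requiring genuine care is the pointwise convergence $\phi_n\uparrow \phi$; everything else amounts to standard properties of inf-convolution and routine limit arguments.
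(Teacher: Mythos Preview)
Your proof is correct. The forward direction is immediate, and for the converse your inf-convolution $\phi_n(x)=\inf_{y}\big(\phi(y)+n|x-y|\big)$ furnishes a nondecreasing sequence of convex $n$-Lipschitz functions converging pointwise to $\phi$; the passage to the limit, treated separately in the nonnegative and integrable cases, is then routine. The one step requiring care --- the pointwise convergence $\phi_n\uparrow\phi$ --- is handled correctly: the penalty $n|x-y|$ together with the affine lower bound forces approximate minimizers $y_n$ to satisfy $(n-|p_0|)|x-y_n|\le C$, hence $y_n\to x$, and continuity of $\phi$ closes the argument.

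The paper takes a parallel but technically different route: instead of inf-convolution it uses Fenchel duality, setting $\phi_n(x)=\sup_{|y|\le n}\big(x\cdot y-\phi^*(y)\big)$, i.e.\ truncating the supremum in the biconjugate formula $\phi=\phi^{**}$ to a ball of radius $n$. This also yields an increasing sequence of convex functions with linear growth, and the limit argument via monotone convergence (applied to $\phi_n-\phi_{n_0}$) is essentially the same as yours. Your approach is slightly more self-contained, since it avoids invoking $\phi=\phi^{**}$ and the attendant lower-semicontinuity of $\phi^*$; the paper's construction, on the other hand, makes the affine bounds on $\phi_n$ immediate (lower bound $x\cdot y_n-\phi^*(y_n)$, upper bound $n|x|+\phi(0)$) without the side computation you needed. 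Both are standard regularizations and the proofs are equivalent in spirit.
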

\begin{proof}
  Let $\phi:\R^d\rightarrow \R$ be a convex function. We define $\phi^*(y)=\sup_{x\in \R^d} x \cdot y -\phi(x)$ the Legendre-Fenchel transform of $\phi$ and have
  $$\phi(x)=\phi^{**}(x)=\sup_{y\in \R^d} x \cdot y -\phi^*(y).$$
  The function $\phi^*:\R^d \rightarrow [-\phi(0),+\infty]$ is a convex lower semicontinuous function. Therefore, for any $n\ge 1$, there exists $y_n$ with Euclidean norm $|y_n|\le n$ and $\inf_{|y|\le n } \phi^*(y)=\phi^*(y_n)$ . There exists $n_0 \in \N^*$ such that $\phi^*(y_n)<\infty$ for $n\ge n_0$, otherwise we would have $\phi^*=+\infty$ and then $\phi=-\infty$.  We set $\phi_n(x)=\sup_{|y|\le n} x \cdot y -\phi^*(y)$ and have for $n\ge n_0$
  $$ x \cdot y_n- \phi^*(y_n) \le \phi_n(x) \le n |x| +\phi(0).$$
  Thus, $\phi_n$ is with affine growth and therefore $\int_{\R^d} \phi_n(x) \mu(dx) \le \int_{\R^d} \phi_n(x) \nu(dx)$.  
  By the monotone convergence theorem the integrals  $\int_{\R^d}(\phi_n-\phi_{n_0})(x)\mu(dx)$ (resp.  $\int_{\R^d}(\phi_n-\phi_{n_0})(x)\nu(dx)$) converge to  $\int_{\R^d}(\phi-\phi_{n_0})(x)\mu(dx)$ (resp.  $\int_{\R^d}(\phi-\phi_{n_0})(x)\nu(dx)$) as $n\to\infty$. We conclude that  $\int_{\R^d}\phi(x)\mu(dx)\le \int_{\R^d}\phi(x)\nu(dx)$. 
\end{proof}
\begin{lemma}\label{lemconv}
   Let $f,g:[0,1]\to\R$ be two convex functions and $h$ denote the convex hull of $f-g$. Then $f-h$ is convex.
\end{lemma}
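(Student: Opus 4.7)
The plan is to show that $f-h$ admits a subgradient at every $x_0\in(0,1)$; combined with continuity (inherited from $f$ and $h$), this yields convexity on $[0,1]$. I will exploit the standard structure of the convex hull of a continuous function on an interval: $h$ is continuous with $h(0)=(f-g)(0)$ and $h(1)=(f-g)(1)$, and the open set $U:=\{x\in[0,1]:h(x)<(f-g)(x)\}$ decomposes as a countable disjoint union of open intervals $(a_k,b_k)\subset(0,1)$ on each of which $h$ is affine with some slope $\beta_k$, with $h$ touching $f-g$ at each endpoint $a_k,b_k$. The first-order optimality of these contact points (global minima of the nonnegative function $f-g-h$) delivers the key slope estimates $\beta_k=h'_+(a_k)\le f'_+(a_k)-g'_+(a_k)$ and $\beta_k=h'_-(b_k)\ge f'_-(b_k)-g'_-(b_k)$.

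On the contact set $C:=[0,1]\setminus U$ one has $(f-h)(x_0)=g(x_0)$ and $f-h\ge g$ pointwise (from $h\le f-g$), so any $s\in\partial g(x_0)$ is immediately a subgradient of $f-h$ at $x_0$ since $(f-h)(x)\ge g(x)\ge g(x_0)+s(x-x_0)=(f-h)(x_0)+s(x-x_0)$. On an affine component $(a_k,b_k)$, the restriction $f-h|_{[a_k,b_k]}$ equals $f$ minus an affine function, hence is convex, and I take the candidate subgradient $s:=f'_+(x_0)-\beta_k$, which is the right derivative of $f-h$ at $x_0$. The subgradient inequality holds on $[a_k,b_k]$ by convexity. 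To extend it to $x\le a_k$, convexity of $f-h$ on $[a_k,b_k]$ gives $(f-h)(x_0)+s(a_k-x_0)\le(f-h)(a_k)=g(a_k)$, while the slope estimate at $a_k$ combined with convexity of $f$ gives $s\ge f'_+(a_k)-\beta_k\ge g'_+(a_k)$; chaining these with $(x-a_k)\le 0$, the supporting line inequality for $g$ at $a_k$, and $f-h\ge g$ yields
\[
(f-h)(x_0)+s(x-x_0)\le g(a_k)+s(x-a_k)\le g(a_k)+g'_+(a_k)(x-a_k)\le g(x)\le(f-h)(x).
\]
A symmetric argument using $s=f'_+(x_0)-\beta_k\le f'_-(b_k)-\beta_k\le g'_+(b_k)$ handles $x\ge b_k$.

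The main technical point will be the pair of slope comparisons $g'_+(a_k)\le s\le g'_+(b_k)$, which encode precisely the first-order optimality of the convex hull at the endpoints of its affine components. Once these are in hand, transferring the local-on-$[a_k,b_k]$ subgradient inequality to the complement is a clean consequence of combining the supporting line inequalities for $g$ at $a_k$ and $b_k$ with the pointwise bound $f-h\ge g$, after which the characterization of convexity of continuous functions via subgradients concludes the proof.
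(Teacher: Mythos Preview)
Your proof is correct. Both your argument and the paper's rest on the same two structural facts: the decomposition of $\{h<f-g\}$ into open intervals on which $h$ is affine with contact at the endpoints, and the pointwise inequality $f-h\ge g$ with equality on the contact set. The difference is organizational. You produce a subgradient of $f-h$ at each interior point; on an affine component $(a_k,b_k)$ this forces you to extract the slope comparisons $g'_+(a_k)\le s\le g'_-(b_k)$ from first-order optimality of $f-g-h$ at the endpoints, and then chain through the supporting lines of $g$ at $a_k$ and $b_k$. The paper instead verifies the convexity inequality $(f-h)(\alpha p+(1-\alpha)q)\le\alpha(f-h)(p)+(1-\alpha)(f-h)(q)$ directly: when the midpoint lies in an affine component $[r,s]$, it simply applies the contact-set case at $r\vee p$ and $s\wedge q$ (where $(f-h)=g$) and then uses convexity of $f-h=f-\text{affine}$ on $[r\vee p,s\wedge q]$. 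This sidesteps all derivative computations and is a little shorter; your route has the virtue of making the first-order endpoint conditions explicit. One small remark: your appeal to continuity at the end is unnecessary, since the existence of a subgradient at every point of $(0,1)$ already gives the convexity inequality for all $p,q\in[0,1]$ and $\alpha\in(0,1)$, the midpoint being interior whenever $p\ne q$.
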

\begin{proof}
   Let $0\le p<q\le 1$ and $\alpha\in[0,1]$. If $h(\alpha p+(1-\alpha)q)=(f-g)(\alpha p+(1-\alpha)q)$, then, using the convexity of $g$, then the fact that $h$ is bounded from above by $f-g$ for the two inequalities, we obtain that
\begin{align}
   (f-h)(\alpha p+(1-\alpha)q)&=g(\alpha p+(1-\alpha)q)\le \alpha g(p)+(1-\alpha)g(q)\notag \\&=\alpha (f(p)-(f-g)(p))+(1-\alpha)(f(q)-(f-g)(q))\notag \\&\le \alpha (f-h)(p)+(1-\alpha)(f-h)(q).\label{caseq}
\end{align}
Otherwise, $h$ is affine on some 
interval $[r,s]$ with $0\le r<\alpha p+(1-\alpha)q<s\le 1$, $h(r)=(f-g)(r)$ and $h(s)=(f-g)(s)$. If $r\in(p,\alpha p+(1-\alpha)q)$, then replacing $\alpha$ by $\frac{q-r}{q-p}$ in \eqref{caseq}, we get
$(f-h)(r)\le \frac{q-r}{q-p}(f-h)(p)+\frac{r-p}{q-p}(f-h)(q)$ so that $(f-h)(r\vee p)\le \frac{q-r\vee p}{q-p}(f-h)(p)+\frac{r\vee p-p}{q-p}(f-h)(q)$. In a symmetric way, $(f-h)(s\wedge q)\le \frac{q-s\wedge q}{q-p}(f-h)(p)+\frac{s\wedge q-p}{q-p}(f-h)(q)$. Hence, 
\begin{align*}
\frac{s\wedge q-(\alpha p+(1-\alpha)q)}{s\wedge q-r\vee p}(f-h)(r\vee p)&+\frac{(\alpha p+(1-\alpha)q)-r\vee p}{s\wedge q-r\vee p}(f-h)(s\wedge q)\\&\le \alpha (f-h)(p)+(1-\alpha)(f-h)(q).
\end{align*}
By convexity of $f$ and the affine property of $h$ on the interval $[r\vee p,s\wedge q]$ containing $\alpha p+(1-\alpha)q$, the left-hand side is not smaller than $(f-h)(\alpha p+(1-\alpha)q)$.
\end{proof}
\begin{lemma}\label{lemquant}
 Let $f:(0,1)\to\R$ be a non-decreasing function and $\eta$ denote the probability distribution of $f(U)$ for $U$ uniformly distributed on $(0,1)$. Then $f$ and the quantile function $F_\eta^{-1}$ coincide away from the at most countable set of their common discontinuities and even everywhere on $(0,1)$ if $f$ is moreover left-continuous.
\end{lemma}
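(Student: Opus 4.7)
The plan is to identify the quantile function $F_\eta^{-1}$ explicitly as the left-continuous modification of $f$, namely $F_\eta^{-1}(p)=\sup_{q\in(0,p)}f(q)=:f(p-)$ for every $p\in(0,1)$. Once this is done, both claims follow immediately: the monotone function $f$ agrees with its left-continuous modification $f(\cdot-)$ outside the (at most countable) set of its own jumps, and this set coincides with the discontinuity set of $F_\eta^{-1}=f(\cdot-)$; and if $f$ is already left-continuous then $f=f(\cdot-)=F_\eta^{-1}$ identically on $(0,1)$.

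The first step is to compute $F_\eta$. Since $f$ is non-decreasing, for every $x\in\R$ the set $\{u\in(0,1):f(u)\le x\}$ is either empty or an interval of the form $(0,a)$ or $(0,a]$ with $a=\sup\{u\in(0,1):f(u)\le x\}$ (with $\sup\emptyset=0$). Hence $F_\eta(x)=\P(f(U)\le x)=a$.

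The key step is the two-sided inequality determining $F_\eta^{-1}(p)$. On the one hand, if $x\ge f(p-)$ then every $q<p$ satisfies $f(q)\le f(p-)\le x$, so $(0,p)\subset\{u:f(u)\le x\}$ and $F_\eta(x)\ge p$. On the other hand, if $x<f(p-)$ then by definition of the left-limit there exists $q<p$ with $f(q)>x$, and monotonicity forces $\{u:f(u)\le x\}\subset(0,q)$, whence $F_\eta(x)\le q<p$. Combining both directions gives $\{x\in\R:F_\eta(x)\ge p\}=[f(p-),\infty)$, so $F_\eta^{-1}(p)=f(p-)$.

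To conclude, the left-continuous function $f(\cdot-)$ is non-decreasing and agrees with $f$ at every continuity point of $f$; since $f$ is monotone its discontinuity set is at most countable, and a short check shows that the left-continuous modification $f(\cdot-)$ shares the same set of discontinuities as $f$ (at such a point $p$ one has $f(p+)>f(p-)$ for both functions, while elsewhere the common value is the limit). Hence $f$ and $F_\eta^{-1}$ coincide on the complement of this countable common discontinuity set, and globally when $f$ is left-continuous. I do not expect any difficulty; the only point to watch is the case $\{u:f(u)\le x\}=\emptyset$, which is correctly handled by the convention $\sup\emptyset=0$ and falls into the regime $x<f(p-)$ for every $p>0$.
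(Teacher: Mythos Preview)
Your proof is correct. Both you and the paper establish the identity $F_\eta^{-1}(p)=f(p-)=\sup_{q\in(0,p)}f(q)$ and then read off the conclusions, but the routes differ slightly. You compute $F_\eta$ directly as the Lebesgue measure of the down-set $\{u:f(u)\le x\}$ and then identify $\{x:F_\eta(x)\ge p\}=[f(p-),\infty)$ by two inclusions. The paper instead exploits the fact that $F_\eta^{-1}(U)$ also has law $\eta$: from $\P(f(U)\le F_\eta^{-1}(p))=\P(F_\eta^{-1}(U)\le F_\eta^{-1}(p))\ge p$ it deduces $F_\eta^{-1}(p)\ge f(p-)$, and by swapping the roles of $f$ and $F_\eta^{-1}$ (using left-continuity of $F_\eta^{-1}$) it gets the sandwich $f(p)\ge F_\eta^{-1}(p)\ge f(p-)$. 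Your computation gives the exact value $F_\eta^{-1}(p)=f(p-)$ in one stroke, while the paper's symmetry trick is a bit shorter but yields only the sandwich (which is of course enough for the lemma). Both arguments are elementary and equally valid; your additional verification that $f$ and $f(\cdot-)$ share the same discontinuity set is a nice touch that the paper leaves implicit.
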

\begin{proof}
The random variables $f(U)$ and $F_\eta^{-1}(U)$ are both distributed according to $\eta$. Hence for $p\in(0,1)$, 
$\P(f(U)\le F_\eta^{-1}(p))=\P(F_\eta^{-1}(U)\le F_\eta^{-1}(p))\ge p$ so that $F_\eta^{-1}(p)\ge \sup_{q\in(0,p)}f(q)$. By symmetry, $f(p)\ge\sup_{q\in(0,p)}F_\eta^{-1}(q)$ with the supremum equal to $F_\eta^{-1}(p)$ by left-continuity and monotonicity of $F_\eta^{-1}$. Hence $f(p)\ge F_\eta^{-1}(p)\ge\sup_{q\in(0,p)}f(q)$ with the supremum equal to $f(p)$ when $f$ is left-continuous. 
\end{proof}\begin{lemma}\label{leminf}
For $x\in\R$, any non empty subset ${\mathcal P}^x$ of $\{\eta\in{\mathcal P}_1(\R):\int_{\R}y\eta(dy)=x\}$ has an infimum $\pi$ for the convex order. Moreover for all $q\in[0,1]$, $\int_q^1F_\pi^{-1}(p)dp=\inf_{\eta\in{\mathcal P}^x}\int_q^1F_\eta^{-1}(p)dp$.
\end{lemma}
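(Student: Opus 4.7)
The plan is to construct the candidate infimum $\pi$ by prescribing its quantile function through the function
\[
G(q) := \inf_{\eta\in{\mathcal P}^x}\int_q^1 F_\eta^{-1}(p)\,dp,\qquad q\in[0,1],
\]
and then to verify that $\pi$ sits below every $\eta\in{\mathcal P}^x$ in the convex order and is maximal among such measures, using the characterization~\eqref{caraccxquant}. Note that for every $\eta\in{\mathcal P}^x$ the map $q\mapsto\int_q^1 F_\eta^{-1}(p)\,dp$ is concave on $[0,1]$ (its right-derivative $-F_\eta^{-1}$ is non-increasing), takes the value $x$ at $0$ and $0$ at $1$; hence $G$ is concave on $[0,1]$ with the same boundary values, as an infimum of a family of such functions.

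The key preliminary step is to show $G$ is real-valued and continuous on $[0,1]$. For the lower bound, combining $\int_0^q F_\eta^{-1}(p)\,dp\le qF_\eta^{-1}(q)$ and $\int_q^1 F_\eta^{-1}(p)\,dp\ge(1-q)F_\eta^{-1}(q)$ with the mean constraint $\int_0^1 F_\eta^{-1}(p)\,dp=x$ yields $\int_q^1 F_\eta^{-1}(p)\,dp\ge (1-q)x$, hence $G(q)\ge(1-q)x$. For the upper bound, fix any $\eta_0\in{\mathcal P}^x$ (non-empty by assumption) and use $G(q)\le\int_q^1 F_{\eta_0}^{-1}(p)\,dp$. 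Since the two bounds are continuous functions of $q$ that coincide with $G(0)=x$ at $0$ and $G(1)=0$ at $1$, and since a real-valued concave function is automatically continuous on the open interval $(0,1)$, we obtain continuity of $G$ on $[0,1]$.

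Next, I would set $f(q):=-G'(q-)$ for $q\in(0,1)$. Concavity of $G$ makes $f$ non-decreasing and left-continuous. Because $G$ is continuous on $[0,1]$ and $f$ is monotone with at most one sign change (as $-G'$ is non-decreasing), $\int_0^1|f(p)|\,dp<\infty$, and the absolute continuity of continuous concave functions on $[0,1]$ gives $\int_q^1 f(p)\,dp=G(q)-G(1)=G(q)$ for every $q\in[0,1]$. Defining $\pi$ as the pushforward of Lebesgue measure on $(0,1)$ under $f$, Lemma~\ref{lemquant} gives $F_\pi^{-1}=f$, so $\pi\in{\mathcal P}_1(\R)$ and $\int_q^1 F_\pi^{-1}(p)\,dp=G(q)$ for every $q\in[0,1]$.

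Finally, for every $\eta\in{\mathcal P}^x$ the means of $\pi$ and $\eta$ both equal $x$, and $\int_q^1 F_\pi^{-1}(p)\,dp=G(q)\le\int_q^1 F_\eta^{-1}(p)\,dp$, so by~\eqref{caraccxquant} $\pi\lecx\eta$. Conversely, any $\tilde\pi\in{\mathcal P}_1(\R)$ satisfying $\tilde\pi\lecx\eta$ for all $\eta\in{\mathcal P}^x$ must have mean $x$ and satisfy $\int_q^1 F_{\tilde\pi}^{-1}(p)\,dp\le\int_q^1 F_\eta^{-1}(p)\,dp$ for every $\eta$, hence $\int_q^1 F_{\tilde\pi}^{-1}(p)\,dp\le G(q)=\int_q^1 F_\pi^{-1}(p)\,dp$, which by~\eqref{caraccxquant} yields $\tilde\pi\lecx\pi$. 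This identifies $\pi$ as the infimum and simultaneously gives the quantile-integral identity claimed. The most delicate point, which I expect to require the most care, is verifying the continuity of $G$ at the endpoints together with the $L^1$-integrability of the left-derivative — everything else then follows mechanically from~\eqref{caraccxquant} and Lemma~\ref{lemquant}.
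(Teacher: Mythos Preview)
Your proof is correct and follows essentially the same route as the paper: both define the concave function $G(q)=\inf_{\eta\in\mathcal P^x}\int_q^1 F_\eta^{-1}(p)\,dp$, establish its finiteness and continuity on $[0,1]$ via the bounds $(1-q)x\le G(q)\le\int_q^1 F_{\eta_0}^{-1}(p)\,dp$, and then take (minus) its left derivative as the quantile function of $\pi$. Your write-up is in fact slightly more self-contained than the paper's, since you verify directly that $\pi$ is the greatest lower bound rather than invoking Kertz--R\"osler for existence, and you spell out the $L^1$-integrability of $f$ explicitly.
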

\begin{proof}The existence of the infimum is given by Kertz and R\"osler~\cite{KeRo2} p162.
   These authors work with the characterization of the convex order in terms of the cumulative distribution functions. By the more convenient characterization in terms of the quantile functions recalled in \eqref{caraccxquant}, it is enough to check that for all $q\in[0,1]$, $\tilde \psi(q):=\inf_{\eta\in{\mathcal P}^x}\int_q^1F_\eta^{-1}(p)dp=\int_q^1F_\pi^{-1}(p)dp$ for some probability measure $\pi\in{\mathcal P}_1(\R)$ such that $\int_{\R}y\pi(dy)=x$. For $\eta\in{\mathcal P}^x$, $\int_0^1F_\eta^{-1}(p)dp=x$ and for all $q\in[0,1]$, $\int_q^1F_\eta^{-1}(p)dp\ge (1-q)x$. Therefore for all $q\in[0,1]$, $\tilde \psi(q)\ge (1-q)x$, $\tilde \psi(0)=x$ and $\tilde \psi(1)=0$. The function $\tilde \psi$ being concave on $[0,1]$ as the infimum of concave functions it is continuous on $(0,1)$. Since for $\eta\in{\mathcal P}^x$, $\tilde \psi(q)\le\int_q^1F_\eta^{-1}(p)dp$, $\tilde \psi$ is continuous at $0$ and $1$ and therefore on $[0,1]$. Denoting its left-hand derivative by $f$, one has $\int_0^1|f(p)|dp<\infty$ and for all $q\in[0,1]$, $\tilde \psi(q)=\int_q^1f(p)dp$ with $f$ non-decreasing. One concludes by defining $\pi$ as the image of the Lebesgue measure on $(0,1)$ by $f$.
\end{proof}
\begin{lemma}\label{lem_wass_strict} Let $\rho>1$ and $\eta,\eta_1,\eta_2 \in \cP_\rho(\R^d)$. Then  \begin{equation}\label{ineg_wass}
    W_\rho^\rho\left(\eta,\frac{\eta_1+\eta_2}2\right) \le \frac12\left( W_\rho^\rho(\eta,\eta_1)+W_\rho^\rho(\eta,\eta_2)\right),
  \end{equation}
Besides, when $\eta$ is absolutely continuous with respect to the Lebesgue measure or $d=1$ and $\eta$ has no atom, equality holds if and only if $\eta_1=\eta_2$.  Last, when $d=1$, the statements remain valid with $\frac{\eta_1+\eta_2}{2}$ replaced by the distribution $\bar{\eta}_{12}$ of $\frac{F_{\eta_1}^{-1}+F_{\eta_2}^{-1}}{2}(U)$ with $U$ uniformly distributed on $[0,1]$.
\end{lemma}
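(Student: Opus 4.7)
The plan is straightforward and proceeds in three steps; there is no genuinely deep obstacle, but care is required to invoke the right uniqueness results for the equality characterization.

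\textbf{Step 1 (inequality).} For $i\in\{1,2\}$, fix an optimal transport plan $\pi_i\in\Pi(\eta,\eta_i)$ so that $\int|x-y|^\rho\pi_i(dx,dy)=W_\rho^\rho(\eta,\eta_i)$. The convex combination $\tfrac12(\pi_1+\pi_2)$ has first marginal $\eta$ and second marginal $\tfrac12(\eta_1+\eta_2)$, so it is admissible and
\[
W_\rho^\rho\!\left(\eta,\tfrac{\eta_1+\eta_2}{2}\right)\le\int_{\R^d\times\R^d}|x-y|^\rho\,\tfrac{\pi_1+\pi_2}{2}(dx,dy)=\tfrac12\bigl(W_\rho^\rho(\eta,\eta_1)+W_\rho^\rho(\eta,\eta_2)\bigr),
\]
which is~\eqref{ineg_wass}.

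\textbf{Step 2 (equality under the regularity assumption on $\eta$).} The essential input is that, under either hypothesis on $\eta$, the optimal plan from $\eta$ to any $\sigma\in\cP_\rho(\R^d)$ is unique and induced by a transport map: in arbitrary dimension when $\eta$ is absolutely continuous this is Theorem~6.2.4 in~\cite{AGS}, while in dimension $d=1$ with $\eta$ atomless it follows from the monotone (quantile) rearrangement $T=F_\sigma^{-1}\circ F_\eta$ being the unique optimizer. Write $\pi_i=(\mathrm{id},T_i)\#\eta$ and let $\pi_\star=(\mathrm{id},T_\star)\#\eta$ be the unique optimal plan from $\eta$ to $\tfrac12(\eta_1+\eta_2)$. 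If equality holds in~\eqref{ineg_wass}, then $\tfrac12(\pi_1+\pi_2)$ is itself optimal for $(\eta,\tfrac12(\eta_1+\eta_2))$ and must therefore coincide with $\pi_\star$. Disintegrating against $\eta$ yields $\tfrac12(\delta_{T_1(x)}+\delta_{T_2(x)})=\delta_{T_\star(x)}$ for $\eta$-a.e.\ $x$; since an average of two Dirac masses is a Dirac only when the two atoms coincide, this forces $T_1=T_2$ $\eta$-a.e., whence $\eta_1=T_1\#\eta=T_2\#\eta=\eta_2$. The converse implication is trivial.

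\textbf{Step 3 (the $\bar{\eta}_{12}$ version in dimension one).} Here I bypass uniqueness of optimal couplings and instead exploit the quantile representation $W_\rho^\rho(\eta,\sigma)=\int_0^1|F_\eta^{-1}(p)-F_\sigma^{-1}(p)|^\rho dp$ from Proposition~2.17 in~\cite{santambrogio}. By construction and Lemma~\ref{lemquant}, $F_{\bar{\eta}_{12}}^{-1}=\tfrac12(F_{\eta_1}^{-1}+F_{\eta_2}^{-1})$, so the strict convexity of $t\mapsto|t|^\rho$ ($\rho>1$) gives, pointwise in $p\in(0,1)$,
\[
\bigl|F_\eta^{-1}(p)-F_{\bar{\eta}_{12}}^{-1}(p)\bigr|^\rho\le\tfrac12\bigl|F_\eta^{-1}(p)-F_{\eta_1}^{-1}(p)\bigr|^\rho+\tfrac12\bigl|F_\eta^{-1}(p)-F_{\eta_2}^{-1}(p)\bigr|^\rho,
\]
with equality if and only if $F_{\eta_1}^{-1}(p)=F_{\eta_2}^{-1}(p)$. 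Integrating produces the analogue of~\eqref{ineg_wass}, and equality then forces $F_{\eta_1}^{-1}=F_{\eta_2}^{-1}$ Lebesgue-a.e.\ on $(0,1)$, i.e.\ $\eta_1=\eta_2$; notice that this variant needs no atomlessness assumption on $\eta$.

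The only mildly delicate point is the uniqueness input in Step~2, which has to be cited consistently in the absolutely continuous case (AGS) and in the one-dimensional atomless case (monotone rearrangement); once it is in hand, everything reduces to an elementary manipulation of disintegrations. The $\bar{\eta}_{12}$ statement is conceptually easier because strict convexity is applied pointwise at the quantile-function level, circumventing any transport-uniqueness machinery.
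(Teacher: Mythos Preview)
Your proof is correct and follows essentially the same approach as the paper: the inequality via the mixture $\tfrac12(\pi_1+\pi_2)$, the equality case via uniqueness of the optimal plan as a transport map (the paper cites Theorem~6.2.4 in~\cite{AGS} for the absolutely continuous case and Theorem~2.9 in~\cite{santambrogio} for the one-dimensional atomless case), and the $\bar{\eta}_{12}$ variant via the quantile formula and pointwise strict convexity. Your observation that the $\bar{\eta}_{12}$ equality characterization requires no atomlessness assumption on $\eta$ is also in line with the paper's proof.
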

\begin{proof}
  Let $\eta_3=\frac{\eta_1+\eta_2}2$. For $i\in\{1,2,3\}$, there exists an optimal probability measure $\pi_i\in\Pi(\eta,\eta_i)$ that satisfies $W_\rho^\rho(\eta,\eta_i)=\int_{\R^d\times\R^d}|y-x|^\rho\pi_i(dx,dy)$. Since $\frac{\pi_1+\pi_2}{2} \in \Pi(\eta,\eta_3)$, we have \begin{align}
    W_\rho^\rho\left(\eta,\frac{\eta_1+\eta_2}2\right)\le \int_{\R^d\times\R^d}|y-x|^\rho\frac{\pi_1+\pi_2}{2}(dx,dy)=  \frac12\left( W_\rho^\rho(\eta,\eta_1)+W_\rho^\rho(\eta,\eta_2)\right).\label{demineqwass}
  \end{align}
We now suppose that $\eta$ is absolutely continuous with respect to the Lebesgue measure.   We know by Theorem~6.2.4 in~\cite{AGS} that the probability measure $\pi_i\in\Pi(\eta,\eta_i)$ satisfying $W_\rho^\rho(\eta,\eta_i)=\int_{\R^d\times\R^d}|y-x|^\rho\pi_i(dx,dy)$ is unique, and writes $\pi_i(dx,dy)=\eta(dx)\delta_{T_i(x)}(dy)$ for some Borel map $T_i:\R^d\rightarrow \R^d$. If ~\eqref{ineg_wass} is an equality, then the inequality in \eqref{demineqwass} is also an equality and, by uniqueness, $\frac{\pi_1+\pi_2}2=\pi_3$. Hence $\eta(dx)\delta_{T_3(x)}(dy)= \eta(dx)\frac12 \left(\delta_{T_1(x)}(dy)+\delta_{T_2(x)}(dy)\right)$, which gives $T_1(x)=T_2(x)=T_3(x)$, $\eta(dx)$-a.e., and implies $\eta_1=\eta_2$. 

When $d=1$, if $\eta$ has no atom, according to Theorem~2.9 in~\cite{santambrogio}, $\pi_i$ is still unique and given by $\eta(dx)\delta_{F_{\eta_i}^{-1}(F_{\eta}(x))}(dy)$, so that the same conclusion holds.
Still when $d=1$, since $F_{\bar{\eta}_{12}}^{-1}=\frac{F_{\eta_1}^{-1}+F_{\eta_2}^{-1}}{2}$, by Proposition 2.17~\cite{santambrogio} and strict convexity of $x\mapsto|x|^\rho$, \begin{align*}
 &W_\rho^\rho(\bar{\eta}_{12},\eta)=\int_0^1\left|\frac{1}{2}(F_{\eta_1}^{-1}(p)+F_{\eta_2}^{-1}(p))-F^{-1}_\eta(p)\right|^\rho dp\\&\le \frac{1}{2}\left(\int_0^1|F_{\eta_1}^{-1}(p)-F^{-1}_\eta(p)|^\rho dp+\int_0^1|F_{\eta_2}^{-1}(p)-F^{-1}_\eta(p)|^\rho dp\right)=\frac{1}{2}\left(W_\rho^\rho(\eta_{1},\eta)+W_\rho^\rho(\eta_{2},\eta)\right)
\end{align*}
with equality iff $dp$ a.e. $F_{\eta_1}^{-1}(p)=F_{\eta_2}^{-1}(p)$ i.e. $\eta_1=\eta_2$.
\end{proof}
\begin{lemma}\label{lemcrois}Let $\rho>1$ and $\mu,\nu\in{\mathcal P}_\rho(\R)$. The function $(0,1)\ni p\mapsto F_{\nu^\rho_{\bar{\mathcal P}(\mu)}}^{-1}(p)-F_\nu^{-1}(p)$ is non-decreasing.
\end{lemma}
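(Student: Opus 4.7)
My plan is to use the uniqueness of the minimizer $\eta_\star:=\nu^\rho_{\bar{\mathcal P}(\mu)}$ in dimension one (established just above in Theorem~\ref{prop_curlyvee}), combined with a rearrangement argument. Set $g(p):=F_{\eta_\star}^{-1}(p)-F_\nu^{-1}(p)$ for $p\in(0,1)$; since $\nu,\eta_\star\in{\mathcal P}_\rho(\R)$, one has $g\in L^\rho((0,1))$. Let $g_\star$ denote the non-decreasing left-continuous rearrangement of $g$, that is, the quantile function of the pushforward of the Lebesgue measure on $(0,1)$ by $g$; it is equidistributed with $g$ under Lebesgue. Define $\tilde\eta$ as the image of the Lebesgue measure on $(0,1)$ by the non-decreasing left-continuous map $p\mapsto g_\star(p)+F_\nu^{-1}(p)$. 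By Lemma~\ref{lemquant}, $F_{\tilde\eta}^{-1}=g_\star+F_\nu^{-1}$, and $\tilde\eta$ lies in ${\mathcal P}_\rho(\R)$.

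Two checks then remain: that $\tilde\eta\in\bar{\mathcal P}(\mu)$ and that $W_\rho^\rho(\nu,\tilde\eta)=W_\rho^\rho(\nu,\eta_\star)$. The second is immediate, since by Proposition~2.17 in~\cite{santambrogio} together with equidistribution of $g$ and $g_\star$,
\[W_\rho^\rho(\nu,\tilde\eta)=\int_0^1|F_{\tilde\eta}^{-1}(p)-F_\nu^{-1}(p)|^\rho\,dp=\int_0^1|g_\star(p)|^\rho\,dp=\int_0^1|g(p)|^\rho\,dp=W_\rho^\rho(\nu,\eta_\star).\]
For the first, I would use the characterization~\eqref{caraccxquant}: equidistribution gives $\int_0^1 F_{\tilde\eta}^{-1}(p)\,dp=\int_0^1 F_{\eta_\star}^{-1}(p)\,dp=\int_0^1 F_\mu^{-1}(p)\,dp$, and the remaining inequality reduces to the Hardy--Littlewood tail bound $\int_q^1 g_\star(p)\,dp\ge\int_q^1 g(p)\,dp$ for $q\in(0,1)$. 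To prove this bound, set $c:=g_\star(q)$; by monotonicity and left-continuity of $g_\star$, $(g_\star-c)_+$ vanishes on $(0,q)$ and equals $g_\star-c$ on $[q,1)$, so $\int_q^1 g_\star\,dp-c(1-q)=\int_0^1(g_\star-c)_+\,dp=\int_0^1(g-c)_+\,dp\ge\int_q^1(g(p)-c)\,dp=\int_q^1 g(p)\,dp-c(1-q)$, where equidistribution is used in the middle equality. Adding $\int_q^1 F_\nu^{-1}(p)\,dp$ to both sides and invoking $\eta_\star\in\bar{\mathcal P}(\mu)$ yields $\int_q^1 F_{\tilde\eta}^{-1}\ge\int_q^1 F_{\eta_\star}^{-1}\ge\int_q^1 F_\mu^{-1}$, hence $\mu\lecx\tilde\eta$.

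Therefore $\tilde\eta$ attains $\inf_{\eta\in\bar{\mathcal P}(\mu)}W_\rho^\rho(\nu,\eta)$, and the uniqueness clause of Theorem~\ref{prop_curlyvee} forces $\tilde\eta=\eta_\star$. Identifying the left-continuous quantile functions then gives $g_\star=g$ on all of $(0,1)$, so $g=F_{\eta_\star}^{-1}-F_\nu^{-1}$ coincides with its non-decreasing rearrangement and is non-decreasing, which is precisely the claim. The main technical step is the tail rearrangement inequality above (with attention to the boundary values of $g_\star$ at $q$); everything else is bookkeeping relying on~\eqref{caraccxquant}, Lemma~\ref{lemquant}, and the uniqueness already established.
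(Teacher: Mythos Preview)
Your argument is correct and takes a genuinely different route from the paper's. The paper proceeds by showing that any $\eta\in\bar{\mathcal P}(\mu)\cap\mathcal P_\rho(\R)$ for which $F_\eta^{-1}-F_\nu^{-1}$ fails to be non-decreasing can be \emph{strictly} improved: it replaces $\eta$ by $\nu_{\bar{\mathcal P}(\eta)}$ (defined via the concave hull of $q\mapsto\int_q^1 F_\eta^{-1}-F_\nu^{-1}\,dp$) and uses that on each interval where the hull differs from the function, Jensen's inequality is strict, yielding $W_\rho^\rho(\nu,\nu_{\bar{\mathcal P}(\eta)})<W_\rho^\rho(\nu,\eta)$. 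This argument is self-contained in that it does not appeal to uniqueness of the minimizer.

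Your approach instead builds a competitor $\tilde\eta$ via the non-decreasing rearrangement $g_\star$ of $g=F_{\eta_\star}^{-1}-F_\nu^{-1}$, obtains $W_\rho(\nu,\tilde\eta)=W_\rho(\nu,\eta_\star)$ from equidistribution, checks $\tilde\eta\in\bar{\mathcal P}(\mu)$ through the Hardy--Littlewood tail bound $\int_q^1 g_\star\ge\int_q^1 g$, and then invokes the one-dimensional uniqueness (proved earlier via Lemma~\ref{lem_wass_strict}, independently of the present lemma) to conclude $g=g_\star$. The trade-off: your proof is shorter and avoids the strict-Jensen analysis on the affine pieces of the hull, but it relies on uniqueness as an external input; the paper's argument is more constructive (it explicitly exhibits the improving measure $\nu_{\bar{\mathcal P}(\eta)}$, which is used elsewhere in Section~\ref{wasprojnu}) and would still go through even if uniqueness were unavailable. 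There is no circularity in your use of uniqueness, since in the paper uniqueness in $d=1$ is established before Lemma~\ref{lemcrois} is invoked.
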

\begin{proof}
   It is enough to check that for $\eta\in{\mathcal P}_\rho(\R)\cap\bar{\mathcal P}(\mu)$ such that $p\mapsto F_{\eta}^{-1}(p)-F_\nu^{-1}(p)$ is not non-decreasing then $W_\rho^\rho(\nu,\nu_{\bar{\mathcal P}(\eta)})<W_\rho^\rho(\nu,\eta)$ (indeed $F_{\nu_{\bar{\mathcal P}(\eta)}}^{-1}(p)-F_\nu^{-1}(p)$ is non-decreasing and $\nu_{\bar{\mathcal P}(\eta)}\in\bar{\mathcal P}(\eta)\subset\bar{\mathcal P}(\mu)$). By Proposition 2.17~\cite{santambrogio} and the definition of $\nu_{\bar{\mathcal P}(\eta)}$,
\begin{align*}
   W_\rho^\rho(\nu,\nu_{\bar{\mathcal P}(\eta)})=\int_0^1|F_{\nu_{\bar{\mathcal P}(\eta)}}^{-1}(p)-F_\nu^{-1}(p)|^\rho dp=\int_0^1|f(p)|^\rho dp,
\end{align*}
where $f(p)$ denotes the left-hand derivative of the concave hull $\tilde \psi(q)$ of $[0,1]\ni q\mapsto\phi(q):=\int_q^1F_\eta^{-1}(p)-F_\nu^{-1}(p)dp$. Since $\forall q\in[0,1]$, $\int_q^1F_\eta^{-1}(p)-F_\nu^{-1}(p)dp\le \int_q^1F_\eta^{-1}(p)dp-q\int_0^1
F_\nu^{-1}(p)dp$ where the right-hand side is a concave function of $q$, $\tilde \psi(1)=\phi(1)=0$ and $\tilde \psi(0)=\phi(0)=\int_0^1F_\eta^{-1}(p)-F_\nu^{-1}(p)dp$. Now either $\tilde \psi$ and $\phi$ coincide on $[0,1]$ and $F_\eta^{-1}-F_\nu^{-1}$ is non-decreasing or the open set $\{q\in[0,1]:\tilde \psi(q)>\phi(q)\}$ is non empty and writes as the at most countable union $\bigcup_{i\in I}(p_i,q_i)$ of disjoint intervals with $0\le p_i<q_i\le 1$, $\tilde \psi(p_i)=\phi(p_i)$, $\tilde \psi(q_i)=\phi(q_i)$ and $\tilde \psi$ affine on $[p_i,q_i]$. For each $i$ in the non empty set $I$, for all $p\in (p_i,q_i]$, $f(p)=\frac{\tilde \psi(q_i)-\tilde \psi(p_i)}{q_i-p_i}=\frac{\phi(q_i)-\phi(p_i)}{q_i-p_i}=\frac{\int_{p_i}^{q_i}F_\nu^{-1}(q)-F_\eta^{-1}(q)dq}{q_i-p_i}$ so that, by Jensen's inequality,
\begin{align}
\forall i\in I,\;\int_{p_i}^{q_i}|f(p)|^\rho dp<\int_{p_i}^{q_i}|F_\nu^{-1}(p)-F_{\eta}^{-1}(p)|^\rho dp.\label{strictaff}\end{align}
For $p\in(0,1]\setminus \bigcup_{i\in I}(p_i,q_i]$, either $\tilde \psi$ is equal to $\phi$ on a left-hand neighbourhood of $p$ or there is an accumulation of intervals $((p_{i_n},q_{i_n}))_{n\in\N}$ at the left of $p$ with $(i_n)_{n\in\N}$ a sequence of distinct elements of $I$, $q_{i_n}<p$ for all $n\in\N$ and $\lim_{n\to\infty} q_{i_n}=p$. For $q$ in the left-hand neighbourhood of $p$ in the first case and in $\{q_{i_n}:n\in\N\}$ in the second one, $\tilde{\psi}(p)-\tilde\psi(q)=\phi(p)-\phi(q)=\int_q^p F_\nu^{-1}(r)-F_\eta^{-1}(r)dr$. By the left-continuity of $q\mapsto F_\nu^{-1}(q)-F_\eta^{-1}(q)$ and the definition of $f$, one concludes that $f(p)=F_\nu^{-1}(p)-F_{\eta}^{-1}(p)$.
Therefore $\int_0^11_{\{p\notin\bigcup_{i\in I}(p_i,q_i]\}}|f(p)|^\rho dp=\int_0^11_{\{p\notin\bigcup_{i\in I}(p_i,q_i]\}}|F_\nu^{-1}(p)-F_\eta^{-1}(p)|^\rho dp$ which combined with \eqref{strictaff} and Proposition 2.17~\cite{santambrogio} leads to $\int_0^1|f(p)|^\rho dp<\int_0^1|F_\nu^{-1}(p)-F_{\eta}^{-1}(p)|^\rho dp=W_\rho^\rho(\nu,\eta)$ when $\tilde \psi$ and $\phi$ do not coincide on $[0,1]$. \end{proof}
\begin{remark}Lemma~\ref{lem_couplage_R} can be proved by similar arguments. But to exhibit $\tilde\eta\in\underline{\mathcal P}(\nu)$ with $W_\rho^\rho(\mu,\tilde\eta)\le W_\rho^\rho(\mu,\eta)$ and $F_{\tilde\eta}^{-1}-F_\mu^{-1}$ non-increasing when $\eta\in\underline{\mathcal P}(\nu)$ is such that $F_\eta^{-1}-F_\mu^{-1}$ is not non-increasing, we chose a more elementary transformation exploiting directly the lack of monotonicity in place of $\mu_{\underline{\mathcal P}(\eta)}$.\end{remark}\begin{lemma}\label{prop_irred_comp2}
  Let $\mu,\nu\in \mathcal{P}_1(\R)$ be two distinct probability measures such that $\mu \lecx \nu$ and $(\ut_n, \ot_n)$, $1\le n\le N\in \N^*\cup \{\infty\}$ be the irreducible components of $(\mu,\nu)$. Then, we have 
  $$\left\{ q\in [0,1], \int_0^q F^{-1}_\mu (p)dp > \int_0^q F^{-1}_\nu (p)dp\right\} =  \bigcup_{n=1}^N(F_{\mu}(\ut_n), F_{\mu}(\ot_n-)).$$
\end{lemma}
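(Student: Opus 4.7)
The plan is to reduce the claim to a convex-duality statement about the \emph{left potentials}
\[
C_\eta(x):=\int_{\R}(x-y)^+\eta(dy),\qquad \eta\in\mathcal{P}_1(\R),
\]
which are convex and continuous with subdifferential $\partial C_\eta(x)=[F_\eta(x-),F_\eta(x)]$. Because $\mu$ and $\nu$ share the same first moment, one checks $u_\nu-u_\mu=2(C_\nu-C_\mu)$, so the irreducible components $(\ut_n,\ot_n)$ are exactly the connected components of the open set $\{C_\nu>C_\mu\}$. Set $Z:=\R\setminus\bigcup_n(\ut_n,\ot_n)=\{C_\mu=C_\nu\}$, which is closed. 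The main technical tool is the Legendre--Fenchel identity
\[
v_\eta(q):=\int_0^q F_\eta^{-1}(p)\,dp=\sup_{x\in\R}\bigl(qx-C_\eta(x)\bigr),\qquad q\in[0,1],
\]
whose supremum is attained precisely at those $x$ with $q\in[F_\eta(x-),F_\eta(x)]$. Writing $\phi(q):=v_\mu(q)-v_\nu(q)\ge 0$ (non-negativity coming from \eqref{caraccxquant}), the task reduces to describing $\{\phi>0\}$.

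The heart of the proof is the equivalence
\[
\phi(q)=0\iff \exists\, x_0\in Z\text{ with }q\in[F_\mu(x_0-),F_\mu(x_0)].
\]
The backward direction is immediate: such an $x_0$ attains both suprema with the common value $qx_0-C_\mu(x_0)=qx_0-C_\nu(x_0)$. For the forward direction, pick $\tilde x$ attaining $v_\nu(q)=q\tilde x-C_\nu(\tilde x)$; then $v_\mu(q)\ge q\tilde x-C_\mu(\tilde x)\ge q\tilde x-C_\nu(\tilde x)=v_\nu(q)$, and $\phi(q)=0$ forces $C_\mu(\tilde x)=C_\nu(\tilde x)$, so $\tilde x\in Z$ and it is also a maximizer for $v_\mu(q)$, giving $q\in[F_\mu(\tilde x-),F_\mu(\tilde x)]$. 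I still need to check that the a priori stronger condition $q\in[F_\nu(x_0-),F_\nu(x_0)]$ is automatic at every $x_0\in Z$: since $C_\nu-C_\mu\ge 0$ attains its minimum $0$ at such $x_0$, its right-derivative $F_\nu(x_0)-F_\mu(x_0)$ is $\ge 0$ and its left-derivative $F_\nu(x_0-)-F_\mu(x_0-)$ is $\le 0$, whence $[F_\mu(x_0-),F_\mu(x_0)]\subseteq[F_\nu(x_0-),F_\nu(x_0)]$.

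It then remains to identify $\bigcup_{x_0\in Z}[F_\mu(x_0-),F_\mu(x_0)]$ with the complement of $\bigcup_n(F_\mu(\ut_n),F_\mu(\ot_n-))$ in $[0,1]$. One inclusion is easy: for $x_0\in Z$ and any $n$, either $x_0\le\ut_n$ (whence $F_\mu(x_0)\le F_\mu(\ut_n)$) or $x_0\ge\ot_n$ (whence $F_\mu(x_0-)\ge F_\mu(\ot_n-)$), so $[F_\mu(x_0-),F_\mu(x_0)]$ is disjoint from $(F_\mu(\ut_n),F_\mu(\ot_n-))$. For the converse, given $q\in[0,1]\setminus\bigcup_n(F_\mu(\ut_n),F_\mu(\ot_n-))$, set $x_0:=F_\mu^{-1}(q)$ so that $q\in[F_\mu(x_0-),F_\mu(x_0)]$; if $x_0\in Z$ we are done, and otherwise $x_0\in(\ut_n,\ot_n)$ for some $n$, which together with $F_\mu(\ut_n)\le F_\mu(x_0-)\le q\le F_\mu(x_0)\le F_\mu(\ot_n-)$ and the constraint on $q$ forces $q=F_\mu(\ut_n)$ or $q=F_\mu(\ot_n-)$, and we replace $x_0$ by the corresponding endpoint, which belongs to $Z$. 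The main obstacle I anticipate is the subdifferential step of the previous paragraph: the difference $C_\nu-C_\mu$ is not convex, so one has to exploit the fact that it still attains a pointwise minimum on $Z$ to extract the correct signs of left/right derivatives; once this is in place, the remaining work is bookkeeping about atoms and boundary points.
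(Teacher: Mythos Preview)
Your approach is essentially the same as the paper's: both proofs hinge on the Legendre--Fenchel duality between the call function $C_\eta(x)=\int_\R (x-y)^+\eta(dy)$ (the paper's $\varphi_\eta$) and the integrated quantile $v_\eta(q)=\int_0^q F_\eta^{-1}(p)\,dp$ (the paper's $\psi_\eta$), together with the identification of the maximizers in $v_\eta(q)=\sup_x\bigl(qx-C_\eta(x)\bigr)$ as the points $x$ with $q\in[F_\eta(x-),F_\eta(x)]$. The paper proves the two inclusions separately: it first pushes $\{C_\mu<C_\nu\}$ through the duality to obtain $\bigcup_n(F_\nu(\ut_n),F_\nu(\ot_n-))\subset\{\psi_\mu>\psi_\nu\}$, then shows the reverse inclusion lands in $\bigcup_n[F_\mu(\ut_n),F_\mu(\ot_n-)]$, excludes the endpoints by a direct computation, and finally fills the gaps $(F_\mu(\ut_n),F_\nu(\ut_n)]$ and $[F_\nu(\ot_n-),F_\mu(\ot_n-))$ by comparing quantiles. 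Your organization via the single equivalence $\phi(q)=0\iff q\in\bigcup_{x_0\in Z}[F_\mu(x_0-),F_\mu(x_0)]$ is arguably cleaner; in particular, your observation that $C_\nu-C_\mu\ge 0$ attains its minimum on $Z$ and hence $\partial C_\mu(x_0)\subseteq\partial C_\nu(x_0)$ there is exactly the structural fact that the paper extracts piecemeal (as $F_\mu(\ut_n)\le F_\nu(\ut_n)$ etc.).

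There is one small gap you should patch. Your forward implication ``pick $\tilde x$ attaining $v_\nu(q)$'' presupposes that the supremum is attained, which fails at $q=0$ when $\nu$ has unbounded support below (and at $q=1$ symmetrically). Likewise, in the final paragraph, ``replace $x_0$ by the corresponding endpoint, which belongs to $Z$'' breaks down when that endpoint is $\ut_n=-\infty$ or $\ot_n=+\infty$. Both issues are harmless for the stated result: one always has $\phi(0)=\phi(1)=0$, and $0,1$ never lie in any open interval $(F_\mu(\ut_n),F_\mu(\ot_n-))$; moreover, for $q\in(0,1)$ the infinite-endpoint case forces $q$ to equal the \emph{finite} endpoint value (since $q=F_\mu(-\infty)=0$ or $q=F_\mu(+\infty-)=1$ are excluded), so a real $x_0\in Z$ is always available. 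A one-line remark handling $q\in\{0,1\}$ separately, and noting that for $q\in(0,1)$ the maximizer $\tilde x=F_\nu^{-1}(q)$ exists, closes the argument.
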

\begin{proof}For $\eta \in \mathcal{P}_1(\R)$, let $\varphi_\eta(t)=\int_{-\infty}^tF_\eta(x)dx$ for $t\in\R$, $\psi_\eta (q)=\int_0^q F^{-1}_\eta (p)dp$ for $q\in [0,1]$ and $\psi_\eta (q)=+\infty$ for $q \not \in [0,1]$. One has $\varphi_\mu(t)=\int_\R(t-x)^+\mu(dx)\le \int_\R(t-y)^+\nu(dy)=\varphi_\nu(t)$ for all $t\in\R$ and 
$(\ut_n, \ot_n)$, $1\le n\le N\in \N^*\cup \{\infty\}$ is the countable family of disjoint intervals such that \begin{equation}
   \{t\in\R:\varphi_\mu(t)<\varphi_\nu(t)\}=\cup_{n=1}^N(\ut_n, \ot_n)\label{compirr}. 
\end{equation}

 Since $\varphi_\eta$ and $\psi_\eta$ are the antiderivatives of two reciprocal non-decreasing functions, it is well known they are the Legendre-Fenchel transforms of each other i.e. $\varphi_\eta(t)=\sup_{q\in \R}\{qt-\psi_\eta(q)\}$. In fact, for $t\in\R$, if $F_\eta(t-)>0$ then $F_\eta^{-1}(q)<t$ for $q\in (0,F_\eta(t-))$, if $F_\eta(t)<1$ then $F_\eta^{-1}(q)>t$ for $q\in (F_\eta(t),1)$ and if $F_\eta(t-)<F_\eta(t)$ then $F_\eta^{-1}(q)=t$ for $q\in (F_\eta(t-),F_\eta(t)]$. We deduce that $\sup_{q\in \R}\{qt-\psi_\eta(q)\}=F_\eta(t)t-\psi_\eta(F_\eta(t))= \int_0^{F_\eta(t)} (t-F^{-1}_\eta (p))dp=\int_0^1 (t-F^{-1}_\eta (p))^+dp=\varphi_\eta(t)$ and 
\begin{equation}
   \forall t\in\R,\;\{q \in \R, qt-\psi_\eta(q)=\varphi_\eta(t)\}=[F_\eta(t-),F_\eta(t)].\label{ensatteifench}
  \end{equation}
Therefore, we have
    \begin{align*}
      \{t\in \R, \varphi_{\mu}(t)<\varphi_{\nu}(t) \}&\subset \{t \in \R, \forall q \in [F_\nu(t-),F_\nu(t)], qt-\psi_\mu(q)< qt-\psi_\nu(q) \} \\
     &= \{t \in \R, \forall q \in [F_\nu(t-),F_\nu(t)], \psi_\mu(q)> \psi_\nu(q) \}. 
    \end{align*}
Hence
\begin{equation}
   \bigcup_{1\le n\le N}(F_\nu(\ut_n),F_\nu(\ot_n-))\subset \bigcup_{1\le n\le N}\bigcup_{t\in (\ut_n,\ot_n) }   [F_\nu(t-),F_\nu(t)]\subset \{ q\in [0,1], \psi_\mu (q) > \psi_\nu (q)\}.\label{inclus1}
\end{equation} 
    Now, we observe that $(0,1)\subset\cup_{t\in\R}   [F_\mu(t-),F_\mu(t)]$ and, for $t\in \R$ such that $F_\mu(t-)<F_\mu(t)$, $\psi_\mu(q)$ is affine for  $q\in [F_\mu(t-),F_\mu(t)]$. Using the convexity of $\psi_\nu$, we get
 $$\{ q\in [0,1], \psi_\mu (q) > \psi_\nu (q)\}\subset \bigcup_{t \in \R : \psi_\mu (F_\mu(t))>\psi_\nu (F_\mu(t)) \text{ or } \psi_\mu (F_\mu(t-))>\psi_\nu (F_\mu(t-))} [F_\mu(t-),F_\mu(t)].   $$
    If $\psi_\mu (F_\mu(t))>\psi_\nu (F_\mu(t))$, we have $\varphi_\mu(t)=F_\mu(t)t-\psi_\mu(F_\mu(t))<F_\mu(t)t-\psi_\nu(F_\mu(t))\le \varphi_\nu(t)$ by using that $\varphi_\nu$ is the Legendre transform of $\psi_\nu$. Similarly, $\psi_\mu (F_\mu(t-))>\psi_\nu (F_\mu(t-))\implies \varphi_\mu(t)<\varphi_\nu(t)$, and we get
$$\{ q\in [0,1], \psi_\mu (q) > \psi_\nu (q)\} \subset  \bigcup_{t \in \R :   \varphi_\mu(t)<\varphi_\nu(t)} [F_\mu(t-),F_\mu(t)] \subset\bigcup_{1\le n\le N}[F_\mu(\ut_n),F_\mu(\ot_n-)].$$
By \eqref{ensatteifench}, if $\ut_n>-\infty$, $\psi_\mu(F_\mu(\ut_n))=\ut_nF_\mu(\ut_n)-\varphi_\mu(\ut_n)$. Since $\varphi_\mu(\ut_n)=\varphi_\nu(\ut_n)$ and the Legendre transform $\psi_\nu$ of $\varphi_\nu$ is not greater than $\psi_\mu$, we deduce that $\psi_\mu(F_\mu(\ut_n))=\psi_\nu(F_\mu(\ut_n))$. In the same way, if $\ot_n<+\infty$, then $\psi_\mu (F_\mu(\ot_n-))=\psi_\nu (F_\mu(\ot_n-))$ so that 
\begin{equation}
   \{ q\in [0,1], \psi_\mu (q) > \psi_\nu (q)\} \subset\bigcup_{1\le n\le N}(F_\mu(\ut_n),F_\mu(\ot_n-)).\label{inclus2}
\end{equation}
Now, \eqref{compirr} implies that $F_\mu(\ut_n)\le F_\nu(\ut_n)$. If $F_\mu(\ut_n)< F_\nu(\ut_n)$, we necessarily have $\ut_n>-\infty$, and for $q\in (F_\mu(\ut_n), F_\nu(\ut_n))$, we have $F_\nu^{-1}(q)\le \ut_n $ and $F_\mu^{-1}(q)> \ut_n $ since $F_\mu$ is right-continuous. Therefore, we have $\int_{F_\mu(\ut_n)}^p F_\mu^{-1}(q) dq >\int_{F_\mu(\ut_n)}^p F_\nu^{-1}(q) dq$ and thus $\psi_\mu(p)>\psi_\nu(p)$ for $p\in(F_\mu(\ut_n),F_\nu(\ut_n)]$. Similarly, we show that  $\psi_\mu(q)>\psi_\nu(q)$ for $q\in[F_\nu(\ot_n-),F_\mu(\ot_n-))$, which, with \eqref{inclus1} and \eqref{inclus2}, gives the claim.
\end{proof}

\bibliographystyle{plain}
\bibliography{Biblio}

\end{document}